\documentclass[a4paper]{extarticle}
\usepackage[utf8]{inputenc}
\usepackage{color}
\usepackage{enumerate}
\usepackage{comment}
\usepackage{hyperref}
\usepackage{authblk}
\usepackage{cancel}
\RequirePackage{amsthm,amsmath,amsfonts,amssymb}
\usepackage[normalem]{ulem}
\newtheorem{Theorem}{Theorem}[section]
\newtheorem{Definition}[Theorem]{Definition}
\newtheorem{Proposition}[Theorem]{Proposition}
\newtheorem{Assumption}{Assumption}
\newtheorem{Lemma}[Theorem]{Lemma}

\newtheorem{Corollary}[Theorem]{Corollary}
\newtheorem{Remark}[Theorem]{Remark}
\newtheorem{Example}[Theorem]{Example}

\def \N{\mathbb{N}}
\def \R{\mathbb{R}}

\def \E{\mathbb{E}}

\def \P{\mathbb{P}}

\def \D{\mathbb{D}}

\def \Fc{{\cal F}}

\def \Hc{{\cal H}}

\def \Sc{{\cal S}}
\def \Tc{{\cal T}}

\def\beqs{\begin{eqnarray*}}
\def\enqs{\end{eqnarray*}}
\def\beq{\begin{eqnarray}}
\def\enq{\end{eqnarray}}

\newcommand{\nc}{\newcommand}
\nc{\esssup}{\mathop{\mathrm{ess\;sup}}}

\newcommand{\1}[1]{{\bf 1}_{\{#1\}}}

\bibliographystyle{abbrv}

\title{Malliavin calculcus for a Hawkes process}
\date{\today}

\author[1]{Dorian Cacitti-Holland \thanks{Dorian.Cacitti\_Holland.Etu@univ-lemans.fr}}
\author[1]{Laurent Denis \thanks{Laurent.Denis@univ-lemans.fr}}
\author[1]{Alexandre Popier\thanks{Alexandre.Popier@univ-lemans.fr}}
\affil[1]{\small Laboratoire Manceau de Math\'ematiques, Le Mans Universit\'e, Avenue O. Messiaen, 72085 Le Mans cedex 9, France.  }

\begin{document}

\maketitle

\begin{abstract}
We develop a Malliavin calculus for nonlinear Hawkes processes in the sense of Carlen and Pardoux. This approach, based on perturbations of the jump times of the process, enables the construction of a local Dirichlet form. As an application, we establish criteria for the absolute continuity of solutions to stochastic differential equations driven by Hawkes processes. We also derive sensitivity formulas for the valuation of financial derivatives with respect to model parameters.
\end{abstract}

\noindent
\textit{Keywords:} Hawkes process, Malliavin calculus, Dirichlet forms, (EID) property.

\noindent
\textit{MSC classification:} 60G55, 60H07, 60H10, 91G99 

\renewcommand*\contentsname{Summary}

\tableofcontents

\section{Introduction}

In this paper we aim to develop a local Malliavin calculus with respect to a Hawkes process. Malliavin calculus is a mathematical framework used to study the smoothness of random variables and functionals of stochastic processes, especially those driven by Brownian motion. The central concept is the Malliavin derivative, a type of derivative that extends the classical notion of differentiation to the space of random variables. This approach allows one to analyze the regularity and differentiability of random processes, providing powerful tools for studying stochastic differential equations (SDEs) and probabilistic systems. The key ideas of Malliavin calculus were introduced by Paul Malliavin in the 1970s. There is a huge literature on this subject, encompassing Malliavin calculus for L\'evy processes (see among others \cite{nual:06,bich:jaco:87,dinunno:09,bouleau:denis} and the references therein). 

Hawkes processes have been introduced by Alan Hawkes in the 1970s as a class of self-exciting point processes. They are widely used to model events or occurrences where the occurrence of one event increases the likelihood of subsequent events in the near future. These processes have been used to model earthquakes, and for some time now, they have been experiencing a renewed interest due to their applications in finance and actuarial science \cite{bacry:15,daley:vere:jones:2,laub:lee:poll:25,laub:lee:taimre}. 

A Hawkes process is characterized by an intensity function that depends on both a baseline intensity and a history of past events. More specifically, the intensity at time $t$ is given by:
$$\lambda^*(t) = \lambda_t + \gamma \left( \int_{(0,t)} \mu(t-s) dN_s\right)$$
where $\lambda_t$ is the baseline intensity, $\gamma$ and $\mu$ are functions that describe the impact of past events, and $N$ is the counting process that represents the occurrences of events. A special case of interest is the linear case where $\gamma (x)=x$. The key feature of Hawkes processes is that the function $\mu$ is typically a non-negative function, which means that past events increase the probability of future events — hence the term ``self-exciting". 

\bigskip

Recently a Malliavin calculus with respect to the Hawkes process $N$ is developed in \cite{hillairet:huang:khabou:reveillac} and \cite{hillairet:reveillac:rosenbaum}. Roughly speaking, the main ingredient consists to perturb the system by adding a particle (or a jump) (see  \cite[Lemma 3.5]{hillairet:reveillac:rosenbaum}) leading to an expansion formula for functionals of the Hawkes process (\cite[Theorem 3.13]{hillairet:reveillac:rosenbaum}). Let us mention that the derivative operator is not local. With these results, they are able to develop a Stein method (see \cite{hillairet:huang:khabou:reveillac}) and to compute some prices of financial or insurance derivatives (see \cite[Section 4]{hillairet:reveillac:rosenbaum}).

\bigskip

Our method is different and follows the approach of Carlen-Pardoux \cite{Carlen1990}. We perturb the jump times and formally differentiate with respect to these jump times. This allows us to define a local derivative, satisfying the chain rule. We apply it to the study of absolute continuity for the law of Hawkes functionals and to the computation of Greeks. 

\paragraph{Breakdown.}

The first step of our construction of a Malliavin derivative with respect to a Hawkes process is to define a directional derivative with respect to a function $m \in \mathcal{H}$, where $\mathcal{H}$ is the ad hoc Cameron-Martin space. 
An integration by parts formula is obtained thanks to the absolute continuity property of the law of the perturbed jump times w.r.t. the initial probability measure (see  Proposition \ref{functCalculus}, Theorem \ref{th:DmF} and Proposition \ref{propo:closable}).

The second step is to define the Malliavin derivative $D$ in all directions by considering a Hilbert basis of $\mathcal H$. We obtain 
a local Dirichlet form $(\mathbb{D}^{1,2}, \mathcal{E})$ which admits a carré du champ $\Gamma$ 
and a gradient $D$ (see Proposition \ref{propo:local:dirichlet}). 
Therefore we get similar properties to the directional derivative as the chain rule. 
Moreover we are interested in the associated divergence operator $\delta$, for which we get an explicit expression of $\delta(u)$ when $u$ is predictable
(see Proposition \ref{FormuleDiv}, Remark \ref{remark:resume} and Corollary \ref{coro:predictable}).

We then establish an absolute continuity criterion: conditionally to $\Gamma[F] = (\Gamma[F_i,F_j])_{1\leq i,j\leq d} \in GL_d(\mathbb{R})$, the random vector $F = (F_1, \cdots, F_d) \in (\mathbb{D}^{1,2})^d$ admits a absolutely continuous law with respect to the Lebesgue measure on $\mathbb{R}^d$ (see Theorem \ref{thm:abs_cont} and Corollary \ref{coro:criter:density}). 

This criterion is firstly applied to the solution of stochastic differential equation driven by the Hawkes process (see Theorem \ref{thm:density:d=1}, Corollary \ref{coro:wronskian} and  Proposition \ref{propo:spanning}). 
As a second application, we compute Greeks for a financial payoff when the underlying process is driven by the Hawkes process (see Proposition \ref{propo:delta}). 


\bigskip

\section{Framework and directional derivation}{\label{Section2}}

\subsection{Setting and density of the jump times} \label{ssect:setting}

We consider the probability space $(\Omega, \Fc ,\P)$ where $\Omega$ is the space of càdlàg (right continuous with left limit) trajectories
$$\omega(t)=\sum_i i \mathbf{1}_{[t_i ,t_{i+1})},$$
with $0<t_1<\cdots <t_i<\cdots $. We define 
$$N_t (\omega)= \sum_{s\leq t} \Delta \omega_s, \quad t\geq 0,$$
the process which counts the jumps between $0$ and $t$, where $\Delta w_s = w_s - w_{s-}$  and $\displaystyle w_{s-} = \lim_{u \to s, u < s} w_u$. We assume that, under $\P$, $(N_t)_{t\in \mathbb{R}_+}$ is a {non-linear} Hawkes process with conditional intensity 
$$\lambda^* (t)= \lambda_t+\gamma \left(  \int_{(0,t)} \mu(t-s) dN_s \right) = \lambda_t +  \gamma \left( \sum_{i=1}^{{N_{t-}}} \mu(t-T_i) \right).$$
Throughout this paper, we suppose that 
\begin{Assumption}\label{ass:lambda_mu}
$\ $
\begin{itemize}
 \item {$\lambda : \mathbb{R}_+ \longrightarrow \mathbb{R}_+^*$ is differentiable with bounded derivative and bounded from below by a constant  $\lambda_* \in \mathbb{R}_+^*$ : $\lambda_t \geq \lambda_*>0$ for any $t\in \mathbb{R}_+$.}
   \item $\mu : \mathbb{R}_+ \longrightarrow \mathbb{R}_+$ is of class $C^1$ with bounded derivative and belongs to  $L^1 ([0,\infty);dt)$.
   \item $\gamma : \mathbb{R}_+ \longrightarrow \mathbb{R}_+$ is of class $C^1$ with bounded derivative and  we denote 
   $$a=\sup_{x\in \R}\vert \gamma^\prime (x)\vert.$$
   \item  $\gamma (0)=0$.
    \item The next inequality holds:
    $$a \lVert \mu \rVert_1 = a \int_0^{\infty} \mu(t) dt < 1.$$
\end{itemize}
\end{Assumption}
\begin{Remark}
    Under this set of assumptions, to see that such a non-linear Hawkes process exists, we can follow and adapt in an easy way the algorithmic construction based on a Poisson random measure on $(0,\infty)\times (0,\infty)$ with unit intensity in \cite[Appendix A1]{costa:graham}. See also \cite[Section 12.6]{Bremaud-2020}. 
\end{Remark}

For any $n\in \N^*$, $0< t_1 < \cdots <t_n$ and $s \in \mathbb{R}_+^*$, we put:
\begin{equation} \label{def:lambda_etoile}
\lambda^* (s;t_1,\cdots,t_n)=\lambda_s+\gamma \left(  \sum_{i=1}^n \mu(s-t_i) \mathbf{1}_{\{s > t_i\}} \right).
\end{equation}
We introduce $(T_i)_{i\in \mathbb{N}^*}$ the  increasing sequence of jump instants of the Hawkes process $N$. Thus, for any $n\in \mathbb{N}^*$ and $t>0$, on the event $\{N_t = n\}$ for all $s\in [0,t]$
$$\lambda^*(s)=\lambda^*(s; T_1, \cdots, T_n).$$
From now on, $T \in \mathbb{R}_+^*$  is a fixed time horizon and we  consider the $\mathbb{P}$-complete right continuous filtration $(\mathcal{F}_t)_{0\leq t\leq T}$ generated by the Hawkes process $N$.

Let $n\in\N^*$ and $0<t_1<t_2<\cdots <t_n$, knowing $T_1=t_1,\ldots ,T_n=t_n$ the process $(N_t -N_{t_n}, \ t \in [t_n, T_{n+1}[)$ is an inhomogeneous Poisson process with intensity $\lambda^* (\cdot;t_1,\cdots ,t_n)$. We deduce the following conditional link for $t\geq t_n$:
\begin{eqnarray*}
    \P(T_{n+1}>t| T_1=t_1 ,\cdots, T_n=t_n) & = & \P(N_t-N_{t_n}=0| T_1=t_1 ,\cdots, T_n=t_n)
    \\ & = & e^{-\int_{t_n}^t \lambda^* (s;t_1,\cdots ,t_n)\,ds}.
\end{eqnarray*} 
Thus the density of $T_{n+1}$ knowing $T_1=t_1,\ldots ,T_n=t_n$ is
$$t\longmapsto \lambda^* (t;t_1,\cdots ,t_n)e^{-\int_{t_n}^t \lambda^* (s;t_1,\cdots ,t_n)\,ds}\mathbf{1}_{\{t>t_n\}}.$$
We deduce that 
$(T_1,\cdots ,T_n, T_{n+1})$ admits for density
\begin{align} \label{eq:density_jump_times}
(t_1 ,\cdots , t_{n+1})\longmapsto &  \left(\prod_{i=1}^{n+1}\lambda^* (t_i;t_1,\cdots ,t_n)\right) \\ \nonumber
&\qquad \times \exp\left( -\int_{0}^{t_{n+1}} \lambda^* (s;t_1,\cdots ,t_n)\,ds \right) \mathbf{1}_{\{0<t_1<\cdots <t_{n+1}\}}, 
\end{align}
where we used the fact that $\lambda^* (s;t_1,\cdots ,t_n)=\lambda^* (s;t_1,\cdots ,t_{i-1})$ if $s\leq t_i$.


\begin{Lemma} \label{lemma:density}
    The distribution of $(T_1, \cdots, T_n)$ conditionally to $\{N_T = n\}$ has a density
    \begin{equation} \label{eq:cond_dens_jump_times}
    \begin{array}{crcl}
        k_n : & \mathbb{R}^n & \longrightarrow & \mathbb{R}_+ \\
        & (t_1, \cdots, t_n) & \longmapsto & \dfrac{\kappa(t)}{\int_{\mathbb{R}^n} \kappa(s) ds}{=\dfrac{\kappa(t)}{\mathbb{P} (N_T =n)}}
    \end{array}
    \end{equation}
    with, for any $t = (t_1, \cdots, t_n) \in \mathbb{R}^n$,
    $$\kappa(t) = \mathbf{1}_{\{0<t_1<\cdots < t_n \leq T\}} \left(\prod_{i=1}^n \lambda^*(t_i; t_1, \cdots, t_n) \right) e^{- \int_0^T \lambda^*(s;t_1, \cdots, t_n) ds}.$$
\end{Lemma}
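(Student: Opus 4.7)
The plan is to derive the conditional density from the joint density \eqref{eq:density_jump_times} of $(T_1,\ldots,T_{n+1})$ by integrating out $T_{n+1}$ over the event $\{T_{n+1}>T\}$, using the key observation that $\{N_T=n\}=\{T_n\leq T<T_{n+1}\}$.

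First I would write, for any Borel set $A\subset \R^n$ (or more directly, working at the level of densities),
\begin{equation*}
\P\bigl(\{N_T=n\}\cap \{(T_1,\ldots,T_n)\in A\}\bigr)=\int_{A}\int_{T}^{+\infty} f_{T_1,\ldots,T_{n+1}}(t_1,\ldots,t_{n+1})\,dt_{n+1}\,dt_1\cdots dt_n,
\end{equation*}
where $f_{T_1,\ldots,T_{n+1}}$ is the density given by \eqref{eq:density_jump_times}. The indicator $\mathbf{1}_{\{0<t_1<\cdots<t_{n+1}\}}$ combined with $t_{n+1}>T$ forces $0<t_1<\cdots<t_n\leq T<t_{n+1}$.

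Next, on $\{t_n\leq T<t_{n+1}\}$ I factor out the terms that do not depend on $t_{n+1}$:
\begin{equation*}
f_{T_1,\ldots,T_{n+1}}(t_1,\ldots,t_{n+1})=\left(\prod_{i=1}^{n}\lambda^*(t_i;t_1,\ldots,t_n)\right)\lambda^*(t_{n+1};t_1,\ldots,t_n)\,e^{-\int_0^{t_{n+1}}\lambda^*(s;t_1,\ldots,t_n)\,ds},
\end{equation*}
and evaluate $\int_T^{+\infty}\lambda^*(t_{n+1};t_1,\ldots,t_n)e^{-\int_0^{t_{n+1}}\lambda^*(s;t_1,\ldots,t_n)\,ds}\,dt_{n+1}$ by the change of variable $u=\int_0^{t_{n+1}}\lambda^*(s;t_1,\ldots,t_n)\,ds$, which gives $e^{-\int_0^{T}\lambda^*(s;t_1,\ldots,t_n)\,ds}$. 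This is exactly $\kappa(t_1,\ldots,t_n)$ once the factor $\mathbf{1}_{\{0<t_1<\cdots<t_n\leq T\}}$ is included, so the joint sub-density of $(T_1,\ldots,T_n)$ on the event $\{N_T=n\}$ is $\kappa$.

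Finally I would divide by $\P(N_T=n)$, which by Fubini equals $\int_{\R^n}\kappa(s)\,ds$, to obtain the conditional density $k_n$ displayed in \eqref{eq:cond_dens_jump_times}. There is no real obstacle here: the only point that requires a small care is verifying that the intensity $\lambda^*(s;t_1,\ldots,t_n)$ agrees with $\lambda^*(s;t_1,\ldots,t_{i-1})$ on $s\leq t_i$, a fact already noted right after \eqref{eq:density_jump_times}; this ensures that the same expression $\lambda^*(\cdot;t_1,\ldots,t_n)$ correctly represents the intensity on the whole interval $(t_n,t_{n+1})$ appearing in the integration step.
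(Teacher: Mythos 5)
Your proposal is correct and follows essentially the same route as the paper: both rewrite $\{N_T=n\}$ as $\{T_n\leq T<T_{n+1}\}$, plug in the joint density \eqref{eq:density_jump_times}, integrate out $t_{n+1}$ over $(T,\infty)$ (the paper factors the exponential at $t_n$ while you use a direct change of variable, but these are equivalent since $\lambda^*\geq\lambda_*>0$ guarantees the tail integral converges to the stated exponential), and then normalize by $\P(N_T=n)$.
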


\begin{proof}
    Let $f$ be a {bounded} measurable function on $\mathbb{R}^n$. Then
    \begin{eqnarray*}
        && \mathbb{E}[f(T_1, \cdots, T_n) \mid N_T = n]
        \\ && =  \dfrac{\mathbb{E}[1_{\{N_T = n\}} f(T_1, \cdots, T_n)]}{\mathbb{P}(N_T = n)}
        \\ && = \dfrac{\mathbb{E}[1_{\{T_n \leq T < T_{n+1}\}} f(T_1, \cdots, T_n)]}{\mathbb{P}(T_n \leq T < T_{n+1})}
        \\ && = \dfrac{\int_{0< t_1 < \cdots < t_n \leq T < t_{n+1}} f(t_1, \cdots, t_n) \varphi(t_1, \cdots, t_{n+1}) dt_1 \cdots dt_{n+1}}{\int_{0<t_1 < \cdots < t_n \leq T < t_{n+1}} \varphi(t_1, \cdots, t_{n+1}) dt_1 \cdots dt_{n+1}}
    \end{eqnarray*}
    with from \eqref{eq:density_jump_times} {
    \begin{align*}
        & \varphi(t_1, \cdots, t_{n+1})
        = \left(\prod_{i=1}^{n+1} \lambda^*(t_i;t_1, \cdots, t_n) \right) e^{- \int_0^{t_{n+1}} \lambda^*(s;t_1, \cdots, t_n) ds}
        \\ & \quad =  \left(\prod_{i=1}^n \lambda^*(t_i; t_1, \cdots, t_n) \right) e^{- \int_0^{t_n} \lambda^*(s;t_1, \cdots, t_n) ds}
        \\ & \qquad \times  \lambda^*(t_{n+1}; t_1, \cdots, t_n)e^{- \int_{t_n}^{t_{n+1}} \lambda^*(s; t_1, \cdots, t_n) ds}.
    \end{align*}
    Thus
    \begin{align*}
        & \int_{T< t_{n+1}} \varphi(t_1, \cdots, t_{n+1}) dt_{n+1}
        \\& \quad = { \left(\prod_{i=1}^n \lambda^*(t_i; t_1, \cdots, t_n) \right) e^{- \int_0^{t_n} \lambda^*(s;t_1, \cdots, t_n) ds}}\int_T^{\infty} \lambda^*(t_{n+1}; t_1, \cdots, t_n) e^{- \int_{t_n}^{t_{n+1}} \lambda^*(s; t_1, \cdots, t_n) ds} dt_{n+1}
        \\ &\quad =  { \left(\prod_{i=1}^n \lambda^*(t_i; t_1, \cdots, t_n) \right) e^{- \int_0^{t_n} \lambda^*(s;t_1, \cdots, t_n) ds}}e^{-\int_{t_n}^T \lambda^*(s; t_1, \cdots, t_n) ds}
        \\ &\quad  = { \left(\prod_{i=1}^n \lambda^*(t_i; t_1, \cdots, t_n) \right) e^{- \int_0^{T} \lambda^*(s;t_1, \cdots, t_n) ds}}.
    \end{align*}
    Assumption \ref{ass:lambda_mu} ensures that all integrals are finite. Therefore
$$
        \mathbb{E}[f(T_1, \cdots, T_n) \mid N_T = n] = \dfrac{\int_{\mathbb{R}^n} f(t) \kappa(t) dt}{\int_{\mathbb{R}^n} \kappa(t) dt}
      = \int_{\mathbb{R}^n} f(t) k_n(t) dt
  $$
  with, for any $t = (t_1, \cdots, t_n) \in \mathbb{R}^n$,
    \begin{eqnarray*}
         \kappa(t)
         & = &  \mathbf{1}_{\{0<t_1<\cdots < t_n \leq T\}} \left(\prod_{i=1}^n \lambda^*(t_i; t_1, \cdots, t_n) \right) e^{- \int_0^T \lambda^*(s;t_1, \cdots, t_n) ds},
    \end{eqnarray*}
which implies the conclusion of the lemma. }
\end{proof}

Note that for any $0<t_1<\cdots < t_k < T$ and any $j \in \{1, \cdots, k\}$,
        {\begin{eqnarray*}
            \lambda^* (t_j;t_1,\cdots ,t_{k-1}) & = & \lambda_t + \gamma \left(\sum_{i=1}^{j-1} \mu(t_j -t_i)\right) \leq \lambda^T + n a\lVert \mu \rVert_{\infty},
        \end{eqnarray*}
        where $\lambda^T =\sup_{t\in [0,T]}\lambda_t$.} Thus the density $k_n$ of $(T_1, \cdots, T_n)$ knowing $\{N_T = n\}$ is bounded by
\begin{equation} \label{eq:bound_dens_jump_times}
            (t_1 ,\cdots , t_{k})\longmapsto \dfrac{1}{\mathbb{P}(N_T = n)}\left(\lambda^T + na \lVert \mu \rVert_{\infty}\right)^n \mathbf{1}_{\{0<t_1<\cdots <t_{n} < T\}}.
        \end{equation}

\subsection{Cameron-Martin space and directional derivation}

We apply the same approach as in \cite{Carlen1990} to define the directional derivative using the reparametrization of time with respect to a function in a Cameron-Martin space.

Let $L^2([0,T])$ be the usual space of square integrable functions on $[0,T]$ with respect to the Lebesgue measure and $\mathcal{H}$, {  which will play the role of {\it Cameron-Martin space}, }be the closed subspace of $L^2([0,T])$ orthogonal to the constant functions, i.e.,
\begin{equation}\label{eq:def_mathcal_H}
    \mathcal{H} = \left\{m \in L^2([0,T]) \quad \int_0^T \, {m}(s)ds = 0\right\}. 
\end{equation}
We denote $\widehat{m} = \int_0^. \, {m}(s)ds$ for every $m\in \Hc$, then $\widehat{m}(0) = \widehat{m}(T) =0$. In a natural way, $\mathcal{H}$ inherits the Hilbert structure of $ L^2([0,T])$ and we denote by $\| \, \|_{\mathcal{H}}$ and $\langle\cdot ,\cdot  \rangle_{\mathcal{H}}$ the norm and the scalar product on it. From now on in this section, we fix a function ${m} \in \Hc$. The condition $\int_0^T \, {m}(s)ds = 0$  ensures that the change of intensity that we are about to define simply shifts the jump times without affecting the total number of jumps. {In order to avoid jumps crossing, we need to truncate $m$, so we }  define   {for any $\varepsilon>0$}
\[ \widetilde{m}_{\varepsilon}(s) = \left\{ \begin{array}{ll}
         -\frac{1}{3\varepsilon} & \mbox{if ${m}(s) \leq -\frac{1}{3\varepsilon}$} ;\\
        m(s) & \mbox{if $ \frac{-1}{3\varepsilon} \leq {m}(s) \leq \frac{1}{3\varepsilon}$} ;\\
         \frac{1}{3\varepsilon} & \mbox{if $ {m}(s) \geq \frac{1}{3\varepsilon}$} . \end{array} \right. \]
and $m_{\varepsilon}\in \Hc$ such that 
\begin{equation}\label{eq:def_m_eps}
{m}_{\varepsilon}(s) = \widetilde{m}_{\varepsilon}(s) -\frac{1}{T}
\int_0^T\,\widetilde{m}_{\varepsilon}(s) ds. 
\end{equation}
We have the following inequality: $\frac{1}{3}\leq 1+ \varepsilon
{m}_{\varepsilon}(s) \leq \frac{5}{3}$ (since $- \frac{1}{3\varepsilon}\leq
\widetilde{m}_{\varepsilon}(s) \leq \frac{1}{3\varepsilon}$). \\
{We clearly have:
\begin{Lemma} \label{lemma:convergence:meps}
    $\lim_{\varepsilon \rightarrow 0}\parallel {m} - m_{\varepsilon}\parallel_{\Hc} = 0.$
\end{Lemma}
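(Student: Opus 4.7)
The plan is to apply the dominated convergence theorem to the truncation $\widetilde{m}_{\varepsilon}$, then to handle the subtracted mean as a small correction.

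First I would show $\widetilde{m}_{\varepsilon} \to m$ in $L^2([0,T])$ as $\varepsilon \to 0$. By construction, $|\widetilde{m}_\varepsilon(s)| \leq |m(s)|$ for every $s \in [0,T]$: indeed, on $\{|m(s)| \leq 1/(3\varepsilon)\}$ we have $\widetilde{m}_\varepsilon(s) = m(s)$, while on $\{|m(s)| > 1/(3\varepsilon)\}$ we have $|\widetilde{m}_\varepsilon(s)| = 1/(3\varepsilon) < |m(s)|$. In particular $|m - \widetilde{m}_\varepsilon|^2 \leq 4|m|^2 \in L^1([0,T])$, and for each $s$, $\widetilde{m}_\varepsilon(s) \to m(s)$ as $\varepsilon \to 0$ (since eventually $|m(s)| \leq 1/(3\varepsilon)$). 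Lebesgue's dominated convergence theorem then yields $\|m - \widetilde{m}_\varepsilon\|_{L^2} \to 0$.

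Next I would deal with the constant correction. Using $\int_0^T m(s) ds = 0$ together with Cauchy--Schwarz,
\[
\left| \frac{1}{T}\int_0^T \widetilde{m}_\varepsilon(s)\,ds \right|
= \left| \frac{1}{T}\int_0^T (\widetilde{m}_\varepsilon(s) - m(s))\,ds \right|
\leq \frac{1}{\sqrt{T}}\,\|m - \widetilde{m}_\varepsilon\|_{L^2} \xrightarrow[\varepsilon \to 0]{} 0.
\]
Hence the constant function $c_\varepsilon := \frac{1}{T}\int_0^T \widetilde{m}_\varepsilon$ satisfies $\|c_\varepsilon\|_{L^2} = \sqrt{T}\,|c_\varepsilon| \to 0$.

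Finally I combine the two estimates via the triangle inequality, using $m_\varepsilon = \widetilde{m}_\varepsilon - c_\varepsilon$ and the fact that $\|\cdot\|_{\mathcal{H}}$ is the restriction of $\|\cdot\|_{L^2}$ to $\mathcal{H}$:
\[
\|m - m_\varepsilon\|_{\mathcal{H}} \leq \|m - \widetilde{m}_\varepsilon\|_{L^2} + \|c_\varepsilon\|_{L^2} \xrightarrow[\varepsilon \to 0]{} 0.
\]
There is no genuine obstacle here; the only mildly delicate point is observing that the domination $|\widetilde{m}_\varepsilon| \leq |m|$ holds uniformly in $\varepsilon$, so that dominated convergence applies cleanly without any additional integrability assumption on $m$ beyond $m \in L^2$.
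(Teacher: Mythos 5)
Your proof is correct; the paper simply states this lemma as ``clear'' and gives no proof, and the argument you supply (pointwise convergence of the truncation with the uniform domination $|\widetilde{m}_\varepsilon|\le |m|$ feeding into dominated convergence in $L^2$, then Cauchy--Schwarz together with $\int_0^T m=0$ to kill the recentering constant) is exactly the natural way to fill that gap.
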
}
  {Let us remark that if $m$ is bounded, this truncation is not necessary since in that case, ${m}_{\varepsilon}=m$ for $\varepsilon $ small enough.}

For all $\varepsilon >0$, we define the reparametrization of time with respect
to $m_{\varepsilon}$ as follows 
\begin{equation*}
    \tau_{\varepsilon} (s) = s + \varepsilon \widehat{m}_{\varepsilon}(s) = \int_0^s (1+\varepsilon m_\varepsilon (u)) du, \quad s\in \R_+.
\end{equation*}
Notice that $\tau_{\varepsilon}(0) = 0, \tau_{\varepsilon}(T) =T$, and since $1+
\varepsilon {m}_{\varepsilon}(s) \in \left[ \frac{1}{3}, \frac{5}{3} \right] \subset \mathbb{R}_+^*$, $\tau_\varepsilon$ is an increasing function hence invertible so the number and the order of jump times between $0$ and $T$ remain unchanged. Moreover, a direct calculation gives
$$\forall s\in [0,T], \quad \tau_\varepsilon^{-1}(s)=\int_0^s \frac{1}{1+\varepsilon m_\varepsilon (\tau_\varepsilon^{-1}(u))} du.$$
Let $\mathcal{T}_{\varepsilon} : \Omega
\rightarrow \Omega$ be the map defined by: $$\forall \omega\in \Omega,(\Tc_{\varepsilon}(\omega))(s) =
\omega(\tau_{\varepsilon}(s)).$$
For all $F \in L^2(\Omega)$, we denote 
$\mathcal{T}_{\varepsilon}F = F\circ \mathcal{T}_{\varepsilon}$ and  $\P^{\varepsilon}$  the probability measure
$\P\Tc_{\varepsilon}^{-1}$ defined on $\mathcal{F}_T$.

\begin{Definition}  \label{def:D^0_m}
We denote 
$$ \mathbb{D}^0_m = \left\{F \in L^2(\Omega): \dfrac{\partial \mathcal{T}_{\varepsilon}F}{\partial \varepsilon} |_{\varepsilon = 0} = \lim_{\varepsilon \rightarrow
0} \frac{1}{\varepsilon} (\mathcal{T}_{\varepsilon}F - F) \text{ in }
L^2(\Omega) \text{ exists}\right\}.$$
For $F \in \D ^0_m, D_mF$ is defined as
the limit
\begin{equation} \label{eq:def_D_m}
D_m F = \dfrac{\partial \mathcal{T}_{\varepsilon}F}{\partial \varepsilon} |_{\varepsilon = 0} = \lim_{\varepsilon \rightarrow
0} \frac{1}{\varepsilon} (\mathcal{T}_{\varepsilon}F - F).
\end{equation}
\end{Definition}
Let define the set $\mathcal{S}$ of ``smooth" functions. 
\begin{Definition}
We say that a map $F: \Omega \rightarrow \mathbb{R}$ belongs to $\mathcal{S}$ if there exist $a \in \mathbb{R}$ and $d \in \mathbb{N}^*$ and for any $n \in \{1, \cdots, d\}$, a function $f_n : \mathbb{R}^{n} \rightarrow \mathbb{R}$ such that:
\begin{enumerate}
    \item The random variable $F$ can be written
    \begin{equation} \label{eq:smooth}
        F = a 1_{\{N_T = 0\}} + \sum_{n = 1}^d  f_{n}(T_1, T_2, \cdots, T_n) \mathbf{1}_{\{N_T = n\}}.
    \end{equation}
    \item For any $n \in \{1, \cdots, d\}$, the function $f_n$ is $C^{\infty}$.
\end{enumerate}
\end{Definition}
\begin{Remark} \label{remark:density}
    The space $\mathcal{S}$ is dense in $L^2(\Omega, \mathcal{F}_T,\P)$.
\end{Remark}

Here are some basic properties of directional derivatives on $\Sc$.

\begin{Lemma} \label{lemma:DTj}
    Let $j\in \mathbb{N}^*$ and $\overline{T}_j = T_j \wedge T$. Then $\overline{T}_j \in \mathbb{D}_m^0$ and
    $$D_m \overline{T}_j = - \widehat m{ (\overline{T}_j)}=-\int_0^{\overline{T}_j}m(s)ds.$$
\end{Lemma}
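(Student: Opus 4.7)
The first step is to show that $\mathcal{T}_\varepsilon \overline{T}_j = \tau_\varepsilon^{-1}(\overline{T}_j)$. Since $(\mathcal{T}_\varepsilon \omega)(s) = \omega(\tau_\varepsilon(s))$, the path $\mathcal{T}_\varepsilon \omega$ jumps at $s$ iff $\omega$ jumps at $\tau_\varepsilon(s)$. Extending $m_\varepsilon$ (and hence $\tau_\varepsilon$) by the identity on $(T,\infty)$, the bijection $\tau_\varepsilon : [0,T] \to [0,T]$ (with $\tau_\varepsilon(T) = T$) gives $T_j(\mathcal{T}_\varepsilon \omega) = \tau_\varepsilon^{-1}(T_j(\omega))$. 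On $\{T_j \leq T\}$ one has $\tau_\varepsilon^{-1}(T_j) \in [0,T]$, and on $\{T_j > T\}$ one has $\overline{T}_j = T = \tau_\varepsilon^{-1}(T) = \tau_\varepsilon^{-1}(\overline{T}_j)$; in both cases the claimed identity holds. In particular $\overline{T}_j$ and $\mathcal{T}_\varepsilon \overline{T}_j$ are bounded by $T$, hence in $L^2(\Omega)$.

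Next, setting $t = \overline{T}_j(\omega)$ and $s_\varepsilon = \tau_\varepsilon^{-1}(t)$, the defining relation $s_\varepsilon + \varepsilon \widehat{m}_\varepsilon(s_\varepsilon) = t$ immediately yields
\[
\frac{\mathcal{T}_\varepsilon \overline{T}_j - \overline{T}_j}{\varepsilon} \;=\; \frac{s_\varepsilon - t}{\varepsilon} \;=\; -\widehat{m}_\varepsilon(s_\varepsilon).
\]
By Cauchy--Schwarz, $|\widehat{m}_\varepsilon(s)| \leq \sqrt{T}\,\|m_\varepsilon\|_{\mathcal{H}}$ for all $s \in [0,T]$, and Lemma \ref{lemma:convergence:meps} ensures $\|m_\varepsilon\|_{\mathcal{H}}$ is bounded uniformly in small $\varepsilon$. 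Consequently $|s_\varepsilon - t| \leq \varepsilon \sqrt{T}\,\|m_\varepsilon\|_{\mathcal{H}} \to 0$ uniformly in $\omega$.

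It remains to pass to the limit in $L^2(\Omega)$. Using the decomposition
\[
\widehat{m}_\varepsilon(s_\varepsilon) - \widehat{m}(t) \;=\; \bigl[\widehat{m}_\varepsilon(s_\varepsilon) - \widehat{m}(s_\varepsilon)\bigr] + \bigl[\widehat{m}(s_\varepsilon) - \widehat{m}(t)\bigr],
\]
the first bracket is bounded uniformly in $s_\varepsilon \in [0,T]$ by $\sqrt{T}\,\|m_\varepsilon - m\|_{\mathcal{H}}$, which vanishes as $\varepsilon \to 0$ by Lemma \ref{lemma:convergence:meps}, while the second bracket tends to $0$ pointwise by continuity of $\widehat{m}$ (the integral of an $L^2$ function being absolutely continuous) together with $s_\varepsilon \to t$. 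The pointwise limit is therefore $-\widehat{m}(\overline{T}_j)$. Combined with the uniform bound $|\widehat{m}_\varepsilon(s_\varepsilon)| \leq \sqrt{T}\,(\|m\|_{\mathcal{H}} + 1)$ valid for all small $\varepsilon$, dominated convergence upgrades this to $L^2(\Omega)$ convergence, proving $\overline{T}_j \in \mathbb{D}_m^0$ and $D_m \overline{T}_j = -\widehat{m}(\overline{T}_j)$. The main technical subtlety is precisely this truncation gap: the reparametrization is built from $m_\varepsilon$ but the limit involves $m$, and this is closed only through the uniform Cauchy--Schwarz control above.
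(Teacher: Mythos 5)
Your proposal is correct and follows the same approach as the paper: both start from the key identity $\mathcal{T}_\varepsilon \overline{T}_j = \tau_\varepsilon^{-1}(\overline{T}_j)$, control the gap between $m_\varepsilon$ and $m$ via $\|m_\varepsilon - m\|_{\mathcal{H}} \to 0$ (Lemma \ref{lemma:convergence:meps}), and close with dominated convergence. The only difference is organizational: you exploit the exact identity $\tfrac{1}{\varepsilon}(\mathcal{T}_\varepsilon\overline{T}_j - \overline{T}_j) = -\widehat{m}_\varepsilon(s_\varepsilon)$ and then split $\widehat{m}_\varepsilon(s_\varepsilon) - \widehat{m}(t)$ into a truncation error plus a continuity error, which is a cleaner bookkeeping than the paper's chain of triangle-inequality and Cauchy--Schwarz estimates, but the underlying ingredients are the same.
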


\begin{proof}
    We first remark that for any $\omega \in \Omega$, if  $T_j (\omega)<T$
  $$ \overline{T_j} (\omega \circ \tau_\varepsilon)=T_j(\omega \circ \tau_\varepsilon)  =   \tau_\varepsilon^{-1}(T_j(\omega))=\tau_\varepsilon^{-1}(\overline{T}_j(\omega)),$$
    and if $T_j (\omega )\geq T,  \Tc_{\varepsilon}\overline{T}_j(\omega)=T$. 
    This yields
    \begin{eqnarray*}
        && |\Tc_\varepsilon \overline{T}_j(\omega) - \overline{T_j}(\omega) + \varepsilon \widehat{m}(\overline{T_j})(\omega)| = \left|(\overline{T}_j \circ \Tc_\varepsilon)(\omega) - \overline{T}_j(\omega) + \varepsilon \int_0^{\overline{T_j}(\omega)} m(t)dt \right|
        \\ && \quad = \left| \overline{T}_j(\omega \circ \tau_\varepsilon) - \overline{T}_j(\omega) + \varepsilon \int_0^{\overline{T_j}(\omega)} m(t)dt \right|
        = \left| \tau_\varepsilon^{-1}(\overline{T}_j(\omega)) - \overline{T}_j(\omega) + \varepsilon \int_0^{\overline{T_j}(\omega)} m(t)dt \right|
        \\ &&\quad = \left| \tau_\varepsilon^{-1}(\overline{T}_j(\omega)) - \tau_\varepsilon (\tau_\varepsilon^{-1}(\overline{T}_j(\omega))) + \varepsilon \int_0^{\overline{T_j}(\omega)} m(t)dt \right| \\
        && \quad = \left| \tau_\varepsilon(s_\varepsilon ) - s_\varepsilon - \varepsilon \int_0^{\tau_\varepsilon(s_\varepsilon )} m(t) dt \right|, \quad \text{with } s_\varepsilon = \tau_\varepsilon^{-1}(\overline{T}_j(\omega))
        \\ && \quad \leq \left| \tau_\varepsilon(s_\varepsilon ) - s_\varepsilon - \varepsilon \int_0^{\tau_\varepsilon(s_\varepsilon )} m_\varepsilon(t) dt \right| + \varepsilon \int_0^{\tau_\varepsilon(s_\varepsilon )} |m_\varepsilon(t) - m(t)| dt
        \\ && \quad \leq  \varepsilon \left| \int_{s_\varepsilon}^{\tau_\varepsilon(s_\varepsilon )} m_\varepsilon(t) dt \right| + \varepsilon \int_0^{\tau_\varepsilon(s_\varepsilon )} |m_\varepsilon(t) - m(t)| dt
        \\ && \quad \leq  \varepsilon {\sqrt{\tau_\varepsilon(s_\varepsilon ) - s_\varepsilon}}{\sqrt{\int_0^T |m_\varepsilon(t)|^2 dt}} + \varepsilon {\int_0^T |m_\varepsilon(t) - m(t)| dt}.
    \end{eqnarray*}
    We have 
    \begin{eqnarray*}
        |\tau_\varepsilon(s_\varepsilon ) - s_\varepsilon|&=&\overline{T}_j (\omega )-s_\varepsilon
        = \int_0^{\overline{T}_j (\omega)} \left(1-\frac{1}{1+\varepsilon m_\varepsilon (\tau_\varepsilon^{-1}(u))}\right)du
        \\ & \leq & \int_0^T \left(1-\frac{1}{1+\varepsilon m_\varepsilon (\tau_\varepsilon^{-1}(u))} \right) du
         \underset{\varepsilon \to 0}{\longrightarrow} 0.
    \end{eqnarray*}
    Moreover $\lim_{\varepsilon \rightarrow 0}\int_0^T |m_\varepsilon(t) - m(t)| dt=0$ and $\int_0^T |m_\varepsilon(t)|^2 dt$ is bounded so that we get by a dominated convergence argument that $\overline{T}_j$ belongs to $\mathbb{D}_m^0$ and $D_m \overline{T}_j=-\widehat{m}(\overline{T}_j)$.
\end{proof}
 The next two propositions  are standard, so that we omit the proof.
\begin{Proposition} \label{prop:lemma}
    Let $n \in \mathbb{N}^*$ and  $f: \R^n\rightarrow \R$ a function of class $C^1$. Then $f(\overline{T}_1, \overline{T}_2, \cdots, \overline{T}_n)$ belongs to $\mathbb{D}^0_m$ and
    $$ D_m f(\overline{T}_1, \overline{T}_2, \cdots, \overline{T}_n) = - \sum_{j=1}^n \dfrac{\partial f}{\partial t_j}(\overline{T}_1, \overline{T}_2, \cdots, \overline{T}_n)\, \widehat{m}(\overline{T}_j).$$
    Thus $\Sc \subset \mathbb{D}^0_m$ and for any $F \in \mathcal{S}$ of the form \eqref{eq:smooth},
    $$D_m F = - \sum_{n=1}^d \sum_{j=1}^n \dfrac{\partial f_n}{\partial t_j}(T_1, \cdots, T_n) \widehat{m}(T_j) \mathbf{1}_{\{N_T = n\}}.$$
\end{Proposition}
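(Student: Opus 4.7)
The starting point is to write the increment using the multivariate integral form of the mean value theorem. For $f \in C^1(\R^n, \R)$ we have
$$
f(\mathcal{T}_\varepsilon \overline{T}_1, \ldots, \mathcal{T}_\varepsilon \overline{T}_n) - f(\overline{T}_1, \ldots, \overline{T}_n)
= \sum_{j=1}^n (\mathcal{T}_\varepsilon \overline{T}_j - \overline{T}_j) \int_0^1 \partial_j f(\xi_\varepsilon(u))\, du,
$$
with $\xi_\varepsilon(u) = (1-u)(\overline{T}_1,\ldots,\overline{T}_n) + u(\mathcal{T}_\varepsilon \overline{T}_1,\ldots,\mathcal{T}_\varepsilon \overline{T}_n)$. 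The key observation is that all $\overline{T}_j$ and $\mathcal{T}_\varepsilon \overline{T}_j$ take values in $[0,T]$, so every $\xi_\varepsilon(u)$ lies almost surely in the compact set $[0,T]^n$, where each $\partial_j f$ is bounded and uniformly continuous. This is what rescues us from the fact that $f$ is only assumed $C^1$ without any global control on its derivatives.

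Next I would divide by $\varepsilon$ and identify the two factors separately. By Lemma \ref{lemma:DTj}, applied coordinate-wise, the bound established there is actually deterministic in $\omega$ and of order $o(\varepsilon)$, so
$$
\frac{\mathcal{T}_\varepsilon \overline{T}_j - \overline{T}_j}{\varepsilon} \xrightarrow[\varepsilon \to 0]{} -\widehat{m}(\overline{T}_j)
$$
not only in $L^2$ but in $L^\infty(\Omega)$. On the other hand, since $\sup_j |\mathcal{T}_\varepsilon \overline{T}_j - \overline{T}_j| \to 0$ uniformly in $\omega$, uniform continuity of $\partial_j f$ on $[0,T]^n$ yields
$$
\int_0^1 \partial_j f(\xi_\varepsilon(u))\, du \xrightarrow[\varepsilon \to 0]{} \partial_j f(\overline{T}_1, \ldots, \overline{T}_n) \quad \text{in } L^\infty(\Omega).
$$
As both factors are bounded uniformly in $\varepsilon$ and $\omega$, a product-of-limits argument in $L^\infty$ (or dominated convergence in $L^2$) gives the announced $L^2$ limit, hence $f(\overline{T}_1, \ldots, \overline{T}_n) \in \mathbb{D}^0_m$ with the stated derivative.

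For the second assertion, I would argue that the reparametrization $\tau_\varepsilon$ is a bijection of $[0,T]$ into itself and therefore preserves the number of jumps up to time $T$, so $\mathbf{1}_{\{N_T = n\}} \circ \mathcal{T}_\varepsilon = \mathbf{1}_{\{N_T = n\}}$ for every $n$. On the event $\{N_T = n\}$ we have $\overline{T}_j = T_j$ for $j \leq n$, so
$$
\mathcal{T}_\varepsilon F - F = \sum_{n=1}^d \bigl( f_n(\mathcal{T}_\varepsilon \overline{T}_1, \ldots, \mathcal{T}_\varepsilon \overline{T}_n) - f_n(\overline{T}_1, \ldots, \overline{T}_n) \bigr) \mathbf{1}_{\{N_T = n\}}.
$$
Dividing by $\varepsilon$, applying the first part to each (smooth) $f_n$, and using that the indicators are bounded and that the sum has a finite number of terms, the $L^2$ limit passes term by term, and on each event $\{N_T = n\}$ the derivatives are evaluated at $(T_1, \ldots, T_n)$, yielding the claimed formula.

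The main obstacle, and really the only one that needs care, is the $L^2$ rather than almost sure convergence: it is handled essentially for free thanks to the compactness of $[0,T]^n$ combined with the $L^\infty$ convergence of $(\mathcal{T}_\varepsilon \overline{T}_j - \overline{T}_j)/\varepsilon$ inherited from the deterministic bound in the proof of Lemma \ref{lemma:DTj}. Everything else is the standard chain rule argument.
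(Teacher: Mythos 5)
Your argument is correct, and it supplies in full the details the paper deliberately skips (the authors simply state that the two propositions are ``standard'' and omit the proof). The structure is exactly the one that is implicitly required: write the increment via the integral form of the mean value theorem, so that one factor is $(\mathcal{T}_\varepsilon \overline{T}_j - \overline{T}_j)/\varepsilon$ and the other is an average of $\partial_j f$ along a segment in $[0,T]^n$; exploit that the jump times and their images under $\tau_\varepsilon^{-1}$ stay in $[0,T]$, so compactness gives boundedness and uniform continuity of $\partial_j f$ even though $f$ is only assumed $C^1$ with no global derivative bounds; and then take the $L^2$ limit of the product. The one point that genuinely needs a closer look is your claim that the estimate inside the proof of Lemma \ref{lemma:DTj} is uniform in $\omega$: this is right, because after the final chain of inequalities the upper bound $\sqrt{|\tau_\varepsilon(s_\varepsilon)-s_\varepsilon|}\,\|m_\varepsilon\|_{\mathcal{H}}+\|m_\varepsilon-m\|_{L^1}$ is controlled by deterministic quantities — using $1+\varepsilon m_\varepsilon\in[1/3,5/3]$ one gets $|\tau_\varepsilon(s_\varepsilon)-s_\varepsilon|\le 3\varepsilon\int_0^T|m_\varepsilon|$ independently of $\omega$ — so the rate is indeed $o(\varepsilon)$ uniformly, and $-\widehat{m}(\overline{T}_j)$ is itself a bounded random variable. (The paper's displayed estimate omits an absolute value on $1-1/(1+\varepsilon m_\varepsilon(\cdot))$, but this does not affect your argument.) For the second assertion, the observation that $N_T\circ\mathcal{T}_\varepsilon=N_T$ because $\tau_\varepsilon$ is an increasing bijection of $[0,T]$ onto itself is precisely what is needed to split $\mathcal{T}_\varepsilon F-F$ over the events $\{N_T=n\}$, and the finite sum then passes the $L^2$ limit term by term. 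So this is a correct and slightly more careful version of the argument the paper takes for granted; it does not differ in method, only in the level of detail.
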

\begin{Proposition}\label{functCalculus}~
    \begin{enumerate}
        \item If $F, G \in \Sc$ then $FG \in \Sc$ and $D_m(FG) = (D_mF)G + F(D_mG)$.
        \item Chain rule: If $F_1, F_2,\ldots, F_n \in  \Sc$ and $\Phi: \R ^n \rightarrow \R$ is { {a smooth function} } then $$\Phi(F_1, F_2,\cdots, F_n) \in  \Sc$$ and
        $$ D_m \Phi(F_1, F_2,\cdots, F_n) = \sum_{j=1}^n \dfrac{\partial \Phi}{\partial x_j}(F_1, F_2,\cdots, F_n) D_m F_j.$$
    \end{enumerate}
\end{Proposition}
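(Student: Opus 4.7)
The plan is to reduce both statements to Proposition \ref{prop:lemma}, working pointwise on each event $\{N_T=k\}$.

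For the Leibniz rule, I would take $F,G\in \Sc$ in their canonical forms of degrees $d_F$ and $d_G$, and pad the $f_n$'s and $g_n$'s with zeros so as to assume a common degree $d=\max(d_F,d_G)$. On each event $\{N_T=k\}$ with $1\le k\le d$ the product satisfies $FG=(f_k g_k)(T_1,\ldots,T_k)$, with $f_k g_k$ smooth as a product of smooth functions; on $\{N_T=0\}$ we get the constant $ab$. Hence $FG\in \Sc$. Applying Proposition \ref{prop:lemma} to $FG$ and using the classical Leibniz identity $\partial_{t_j}(f_k g_k)=(\partial_{t_j}f_k)g_k+f_k(\partial_{t_j}g_k)$ then yields
$$D_m(FG)=-\sum_{k=1}^{d}\sum_{j=1}^{k}\bigl[(\partial_{t_j}f_k)g_k+f_k(\partial_{t_j}g_k)\bigr](T_1,\ldots,T_k)\,\widehat{m}(T_j)\mathbf{1}_{\{N_T=k\}},$$
which one rearranges as $(D_mF)G+F(D_mG)$.

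For the chain rule the strategy is identical. I would write each $F_j$, $1\le j\le n$, in its canonical form of degree $d_j$ (with coefficients $a_j$ on $\{N_T=0\}$ and $f^{(j)}_k$ on $\{N_T=k\}$), pad to a common degree $d=\max_j d_j$, and observe that on $\{N_T=k\}$
$$\Phi(F_1,\ldots,F_n)=\Phi\bigl(f^{(1)}_k(T_1,\ldots,T_k),\ldots,f^{(n)}_k(T_1,\ldots,T_k)\bigr),$$
which is smooth in $(T_1,\ldots,T_k)$ as the composition of smooth maps. This shows $\Phi(F_1,\ldots,F_n)\in \Sc$. Applying Proposition \ref{prop:lemma} and the classical chain rule $\partial_{t_j}\bigl[\Phi(f^{(1)}_k,\ldots,f^{(n)}_k)\bigr]=\sum_{i=1}^{n}\partial_{x_i}\Phi\cdot\partial_{t_j}f^{(i)}_k$ slice by slice then delivers the stated formula.

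The real content of both statements is already packaged in Proposition \ref{prop:lemma}, which converts directional differentiation on $\Sc$ into classical partial differentiation of the $f_k$'s with respect to the jump times. I do not expect any genuine obstacle: the only care required is consistent bookkeeping of canonical representations across the events $\{N_T=k\}$, together with the mild interpretive point of how to handle the constant on $\{N_T>d\}$ when $\Phi(0,\ldots,0)\neq 0$ (which would require a minor extension of the notion of smooth functional). This routine character is presumably why the authors omit the proof.
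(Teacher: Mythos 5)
Your proof follows essentially the same approach the authors use (and omit as standard): decompose on each event $\{N_T=k\}$, reduce everything to Proposition \ref{prop:lemma}, and invoke the classical Leibniz and chain rules on the smooth coefficient functions $f_k$. You also correctly flag the subtle point that when $\Phi(0,\ldots,0)\neq 0$ the value on $\{N_T>d\}$ is not captured by the canonical form \eqref{eq:smooth}; the paper glosses over this, and indeed it requires either the convention $\Phi(0)=0$ or a small enlargement of $\mathcal{S}$.
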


\subsection{Absolute continuity of $\mathbb{P}^\varepsilon$ w.r.t. $\mathbb{P}$}

Let $n \in \N^*$ and $\varepsilon>0$. Evoke that  $\P^{\varepsilon}=\P\Tc_{\varepsilon}^{-1}$ and let $\E^{\varepsilon}$ be the expectation under it.  We define
\begin{equation} \label{eq:Phieps}
    \Phi_{\varepsilon}(u_1,\cdots, u_n) = (u_1 + \varepsilon 
\widehat{m}_{\varepsilon}(u_1), \cdots, u_n + \varepsilon \widehat{m}_{\varepsilon}(u_n)),
\end{equation}
\begin{equation} \label{eq:phin}
    \varphi_n (t_1,\cdots, t_n) = \bigg( \prod_{i=1}^{n}\lambda^* (t_i;t_1,\cdots ,t_n) \bigg) e^{-\int_{0}^{T} \lambda^* (s;t_1,\cdots ,t_n)ds},
\end{equation}
and 
  
\begin{equation} \label{eq:zneps}
    Z_n^{\varepsilon} = \dfrac{(\varphi_n \circ \Phi_{\varepsilon})(T_1,\cdots,T_n)}{ \varphi_n (T_1,\cdots,T_n)}\prod_{i=1}^n
(1+\varepsilon  m _{\varepsilon}(T_i)).
\end{equation}
\begin{Proposition} \label{propo:abs:cont}
    $\P^{\varepsilon}$ is absolutely continuous with respect to $\P$ with density
    $$ \dfrac{d\P^{\varepsilon}}{d\P} = \sum_{n=0}^{+\infty} Z_n^{\varepsilon}\mathbf{1}_{\{N_T =n\}}:=G^\varepsilon.$$
\end{Proposition}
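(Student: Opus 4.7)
The plan is to verify, for any bounded measurable $F$ on $\Omega$, the identity
$$\E^{\varepsilon}[F] = \E[F \circ \Tc_{\varepsilon}] = \E[F G^{\varepsilon}],$$
by decomposing the left-hand side over the events $\{N_T = n\}$ and performing a change of variables in each piece. The key observation is that $\tau_{\varepsilon}$ is a strictly increasing bijection of $[0,T]$ onto $[0,T]$ with $\tau_{\varepsilon}(0)=0$ and $\tau_{\varepsilon}(T) = T$; consequently $N_T \circ \Tc_{\varepsilon} = N_T$, and on $\{N_T = n\}$ the action of $\Tc_{\varepsilon}$ on the jump times is $T_i \circ \Tc_{\varepsilon} = \tau_{\varepsilon}^{-1}(T_i)$ (as already used in the proof of Lemma \ref{lemma:DTj}).

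First I would establish the identity on the class of random variables of the form $F = f(T_1,\dots,T_n)\mathbf{1}_{\{N_T=n\}}$, with $f$ bounded measurable. By Lemma \ref{lemma:density} and the observation above,
\begin{eqnarray*}
\E[F \circ \Tc_{\varepsilon}] &=& \int_{\R^n} f\bigl(\tau_{\varepsilon}^{-1}(t_1),\dots,\tau_{\varepsilon}^{-1}(t_n)\bigr) \kappa(t_1,\dots,t_n)\, dt_1\cdots dt_n.
\end{eqnarray*}
Then I would perform the change of variables $t_i = \tau_{\varepsilon}(u_i) = \Phi_{\varepsilon}(u_i)$. The Jacobian factor is $\prod_{i=1}^n (1+\varepsilon m_{\varepsilon}(u_i))$, which is bounded between $\tfrac{1}{3}$ and $\tfrac{5}{3}$ so the transformation is a $C^1$-diffeomorphism; monotonicity of $\tau_{\varepsilon}$ ensures the simplex $\{0<t_1<\cdots<t_n\leq T\}$ is mapped bijectively to $\{0<u_1<\cdots<u_n\leq T\}$. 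Recalling $\kappa(t) = \mathbf{1}_{\{0<t_1<\cdots<t_n\leq T\}}\varphi_n(t)$ from \eqref{eq:phin}, this yields
$$\E[F \circ \Tc_{\varepsilon}] = \int_{\R^n} f(u) \frac{\varphi_n(\Phi_{\varepsilon}(u_1),\dots,\Phi_{\varepsilon}(u_n))}{\varphi_n(u)} \prod_{i=1}^n (1+\varepsilon m_{\varepsilon}(u_i))\, \kappa(u)\, du,$$
which by the definition \eqref{eq:zneps} of $Z_n^{\varepsilon}$ is exactly $\E[F\, Z_n^{\varepsilon}\mathbf{1}_{\{N_T=n\}}]$.

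To conclude, I would extend the identity to every bounded $\Fc_T$-measurable $F$. Since, on $\{N_T = n\}$, the $\sigma$-algebra $\Fc_T$ is generated by the jump times $(T_1,\dots,T_n)$, a standard monotone-class argument (first on product indicator functions, then by linearity and monotone convergence) gives
$$\E[F \circ \Tc_{\varepsilon}\, \mathbf{1}_{\{N_T=n\}}] = \E[F\, Z_n^{\varepsilon}\mathbf{1}_{\{N_T=n\}}]$$
for all bounded measurable $F$. Summing over $n\in\N$ (exchanging the series and the expectation via Fubini/Tonelli, which is legitimate because each $Z_n^{\varepsilon}\geq 0$ and the partial sums are dominated by $|F|\,G^{\varepsilon}$) yields $\E[F \circ \Tc_{\varepsilon}] = \E[F\, G^{\varepsilon}]$, i.e.\ $d\P^{\varepsilon}/d\P = G^{\varepsilon}$.

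The only delicate point is strictly bookkeeping: one must verify that the Jacobian, the domain of integration, and the division by $\varphi_n$ in \eqref{eq:zneps} are all well defined, which follow respectively from $1+\varepsilon m_{\varepsilon}\in[\tfrac{1}{3},\tfrac{5}{3}]$, from the monotonicity of $\tau_{\varepsilon}$ with fixed endpoints, and from Assumption \ref{ass:lambda_mu} which ensures $\lambda^{\ast}(t_i;t_1,\dots,t_n)\geq \lambda_{\ast} > 0$ so $\varphi_n > 0$ on the simplex. No non-routine estimate is required.
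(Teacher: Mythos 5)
Your proof is correct and follows essentially the same route as the paper: decompose over $\{N_T = n\}$, use the jump-time density, and change variables via $\Phi_\varepsilon$ to identify the Radon--Nikodym density piece by piece. The only (cosmetic) difference is that you invoke Lemma \ref{lemma:density} directly rather than re-integrating out $t_{n+1}$ inside the proof, and you spell out the monotone-class extension explicitly, which the paper leaves implicit.
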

\begin{proof}
    For any smooth function $f: \R
^n \rightarrow \R$, we have thanks to Lemma \ref{lemma:density}:
\begin{align*}
    & \E^{\varepsilon}[f(T_1,\cdots,T_n)\mathbf{1}_{\{N_T = n\}}] =\E[(f\circ \Phi_{\varepsilon} ^{-1})(T_1,\cdots,T_n)\mathbf{1}_{\{T_n\leq T < T_{n+1}\}}]
    \\ &\quad = \iint_{0<t_1<\cdots<t_n\leq T<t_{n+1}}\, (f\circ \Phi_{\varepsilon} ^{-1})(t_1,\cdots,t_n) \left(\prod_{i=1}^{n+1}\lambda^* (t_i;t_1,\cdots ,t_n)\right)
    \\ & \qquad \times e^{-\int_{0}^{t_{n+1}} \lambda^* (s;t_1,\cdots ,t_n)\,ds}dt_1\cdots dt_{n+1}
    \\ &\quad = \iint_{0<t_1<\cdots<t_n\leq T}\, (f\circ \Phi_{\varepsilon} ^{-1})(t_1, \cdots, t_n) \prod_{i=1}^{n}\lambda^* (t_i;t_1,\cdots ,t_n) dt _1\cdots dt_n
    \\ & \qquad \times \int_T^{\infty}\lambda^* (t_{n+1};t_1,\cdots ,t_n)e^{-\int_{0}^{t_{n+1}} \lambda^* (s;t_1,\cdots ,t_n)\,ds}dt_{n+1}
    \\ & \quad= \iint_{0<t_1<\cdots<t_n\leq T}\, (f\circ \Phi_{\varepsilon} ^{-1})(t_1,\cdots,t_n) \varphi_n(t_1,\cdots, t_n)dt_1\cdots dt_n
    \\ & \quad= \iint_{0<u_1<\cdots<u_n\leq T}\, f(u_1,\cdots,u_n) (\varphi_n\circ \Phi_{\varepsilon})(u_1,\cdots,u_n) |\det J_{\Phi_{\varepsilon}}|du_1\cdots du_n
    \\ & \quad= \iint_{0<u_1<\cdots<u_n\leq T}\, f(u_1,\cdots,u_n) (\varphi_n\circ \Phi_{\varepsilon})(u_1,\cdots,u_n) 
    \\ & \qquad \times \prod_{i=1}^n (1+\varepsilon  m _{\varepsilon}(u_i))du_1\cdots du_n
     = \E\left[f(T_1,\cdots,T_n)\mathbf{1}_{\{N_T = n\}} Z_n^{\varepsilon}\right],
\end{align*}
  {where $\det J_{\Phi_{\varepsilon}}$ denotes the determinant of the Jacobian matrix of $\Phi_{\varepsilon}$.}\\
Let us emphasize that Assumption \ref{ass:lambda_mu} is used to ensure that: $\lambda^* (s;t_1,\cdots ,t_n) \geq \lambda_* > 0$. 
\end{proof}
\begin{Remark}
    This series converges in $L^1(\Omega)$ uniformly in $\varepsilon$ because for $f = 1$
    $$\mathbb{E} [G^\varepsilon] = \sum_{n=0}^{+\infty} \mathbb{E}[Z_n^\varepsilon \mathbf{1}_{\{N_T = n\}}] = \sum_{n=0}^{+\infty} \mathbb{E}^\varepsilon [\mathbf{1}_{\{N_T = n\}}] = 1.$$
\end{Remark}

\begin{Remark}
    In case of the Poisson process,  $\varphi_n$ is constant, equal to $\lambda^n$ and we have again the result obtained in \cite{Carlen1990} for standard Poisson processes
    $$\dfrac{d\P^{\varepsilon}}{d\P} =  \prod_{i=1}^{N_T}(1+\varepsilon  m _{\varepsilon}(T_i)).$$
\end{Remark}



We begin with a first result about the limits when $\varepsilon$ tends to $0$.

\begin{Proposition} \label{propo:G0=1}
    For any $n\in \mathbb{N}^*$, a.s. $$\lim_{\varepsilon \to 0} Z_n^\varepsilon =\lim_{\varepsilon \to 0} G^\varepsilon= 1. $$
\end{Proposition}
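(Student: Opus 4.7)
The plan is to decompose $Z_n^\varepsilon$ into two factors—the ratio $(\varphi_n\circ \Phi_\varepsilon)(T_1,\ldots,T_n)/\varphi_n(T_1,\ldots,T_n)$ and the product $\prod_{i=1}^n (1+\varepsilon m_\varepsilon(T_i))$—and show each tends to $1$ almost surely. For the ratio, the key observation is that $\widehat m_\varepsilon$ is equicontinuous and uniformly small after multiplication by $\varepsilon$: by Cauchy–Schwarz,
$$|\varepsilon \widehat m_\varepsilon(t)| = \varepsilon \left|\int_0^t m_\varepsilon(s)\,ds\right| \leq \varepsilon \sqrt{T}\,\|m_\varepsilon\|_{\Hc},$$
which tends to $0$ uniformly in $t\in [0,T]$ since Lemma \ref{lemma:convergence:meps} implies $\|m_\varepsilon\|_\Hc$ stays bounded. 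Hence $\Phi_\varepsilon(T_1,\ldots,T_n)\to (T_1,\ldots,T_n)$ almost surely. Then I would invoke continuity of $\varphi_n$ (which follows from the $C^1$ regularity of $\lambda,\mu,\gamma$ in Assumption \ref{ass:lambda_mu}) and the strict positivity $\varphi_n\geq \lambda_*^n e^{-\int_0^T \lambda^*(s;\cdot)ds}>0$ to conclude the ratio converges to $1$ a.s.

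The harder factor is the product. The difficulty is that $m\in L^2$ is only an equivalence class, so $m(T_i)$ is not pointwise meaningful, and the truncated version satisfies only the crude bound $|\varepsilon m_\varepsilon(s)|\leq \tfrac{1}{3}+O(\varepsilon)$, which does \emph{not} vanish deterministically. To overcome this, I would fix a representative of $m$ and note that for Lebesgue-a.e.\ $s\in [0,T]$, $|m(s)|<\infty$; for such $s$ and any $\varepsilon<\tfrac{1}{3|m(s)|}$ one has $\widetilde m_\varepsilon(s)=m(s)$, so $\varepsilon\widetilde m_\varepsilon(s)\to 0$. Moreover $\tfrac{1}{T}\int_0^T \widetilde m_\varepsilon(u)\,du\to 0$ by dominated convergence (with dominating function $|m|\in L^1([0,T])$, since $[0,T]$ has finite measure). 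Hence $\varepsilon m_\varepsilon(s)\to 0$ for Lebesgue-a.e.\ $s$.

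The transfer to the random points $T_i$ comes from Lemma \ref{lemma:density}: conditionally on $\{N_T=n\}$ the law of $(T_1,\ldots,T_n)$ is absolutely continuous with respect to Lebesgue measure on $\{0<t_1<\cdots<t_n\leq T\}$. Consequently the negligible set on which $\varepsilon m_\varepsilon(s)\not\to 0$ pulls back to a $\P$-negligible set for each $T_i$, giving $\varepsilon m_\varepsilon(T_i)\to 0$ a.s.\ on $\{N_T=n\}$. Since only finitely many factors appear, $\prod_{i=1}^n (1+\varepsilon m_\varepsilon(T_i))\to 1$ a.s.\ on $\{N_T=n\}$, and combining with the ratio yields $Z_n^\varepsilon\to 1$ a.s.

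Finally, for $G^\varepsilon$, the Hawkes process is non-explosive under Assumption \ref{ass:lambda_mu} so $N_T<\infty$ a.s.; thus exactly one indicator $\mathbf{1}_{\{N_T=n\}}$ is non-zero on each sample path, giving $G^\varepsilon=Z_{N_T}^\varepsilon\to 1$ a.s. The main obstacle, as indicated, is justifying the pointwise vanishing of $\varepsilon m_\varepsilon(T_i)$ despite the $1/\varepsilon$ truncation level; the resolution uses the absolute continuity of the jump-time density to bypass the worst-case bound on the truncation.
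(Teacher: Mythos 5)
Your proposal is correct and follows the same overall structure as the paper's proof: decompose $Z_n^\varepsilon$ into the ratio $(\varphi_n\circ\Phi_\varepsilon)/\varphi_n$ and the product $\prod_i(1+\varepsilon m_\varepsilon(T_i))$, show each tends to $1$ a.s.\ on $\{N_T=n\}$, and conclude for $G^\varepsilon$ since $N_T<\infty$ a.s. The treatment of the ratio is essentially identical to the paper's (uniform smallness of $\varepsilon\widehat m_\varepsilon$ via Cauchy--Schwarz, then continuity of $\varphi_n$). Where you diverge is in the argument that $\varepsilon m_\varepsilon(T_i)\to 0$. The paper controls $|m(T_i)-m_\varepsilon(T_i)|$ via the $L^2$-convergence of Lemma~\ref{lemma:convergence:meps} (plus a.s.\ finiteness of $m(T_i)$, which kills the truncation term for small $\varepsilon$), and then writes $\varepsilon m_\varepsilon(T_i)=\varepsilon m(T_i)+\varepsilon(m_\varepsilon(T_i)-m(T_i))$. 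You instead work pointwise: at each Lebesgue point where $|m(s)|<\infty$ the truncation eventually becomes the identity, so $\varepsilon\widetilde m_\varepsilon(s)=\varepsilon m(s)\to 0$, and the centering constant $\tfrac{1}{T}\int_0^T\widetilde m_\varepsilon$ vanishes by dominated convergence; you then transfer the Lebesgue-null exceptional set to a $\P$-null set via the conditional density of Lemma~\ref{lemma:density}. Both arguments rest on the same two facts ($m$ finite Lebesgue-a.e.\ and the jump-time law having a density), but yours is more self-contained for this factor and makes explicit the role of the density that the paper's one-line assertion ``$m(T_i)<\infty$ a.s.'' leaves implicit; the paper's is shorter once Lemma~\ref{lemma:convergence:meps} is granted. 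Either way the proof goes through.
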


\begin{proof}
{We work on the set $\{N_T = n\}$. Hence for all  $i=1,\ldots,n$, $T_i \leq T$.}

    Firstly we have the almost surely convergences
    $$\forall i\in \{1, \cdots, n\}, \quad \varepsilon m_\varepsilon(T_i) \underset{\varepsilon \to 0}{\overset{a.s.}{\longrightarrow}} 0.$$
    Indeed we use \eqref{eq:def_m_eps} 
  to get
    \begin{eqnarray*}
        m(s) - m_\varepsilon(s) & = & m(s) - \widetilde{m}_\varepsilon(s) + \dfrac{1}{T} \int_0^T (m(r) - \widetilde{m}_\varepsilon(r)) dr.
    \end{eqnarray*}
    From the very definition of $\widetilde m_\varepsilon$, we deduce that:
    $$|m(T_i) - m_\varepsilon(T_i)|  \leq 2 \left| m(T_i) - \dfrac{1}{3\varepsilon} \right|\mathbf 1_{|m(T_i)| \geq \frac{1}{3\varepsilon}} +   \left|\dfrac{1}{T} \int_0^T (m(r) - \widetilde{m}_\varepsilon(r)) dr \right|.$$
    As $m(T_i) < \infty$ a.s., for $\varepsilon \in \mathbb{R}_+^*$ small enough, we have:
$$
        |m(T_i) - m_\varepsilon(T_i)| 
        \leq   \dfrac{1}{T} \int_0^T |m(r) - \widetilde{m}_\varepsilon(r)| dr
 \leq  \dfrac{1}{\sqrt{T}} \lVert m - \widetilde{m}_\varepsilon \rVert_\mathcal{H}. 
$$
Using Lemma \ref{lemma:convergence:meps}, we deduce that a.s. 
    $$\varepsilon m_\varepsilon(T_i) = \varepsilon m(T_i) + \varepsilon (m_\varepsilon(T_i) - m(T_i)) \underset{\varepsilon \to 0}{\longrightarrow} 0.$$
    So we have the convergence of the product in \eqref{eq:zneps}
    $$\prod_{i=1}^n (1+\varepsilon m_\varepsilon(T_i)) \underset{\varepsilon \to 0}{\overset{a.s.}{\longrightarrow}} 1.$$

    Secondly for the convergence of \eqref{eq:Phieps}, for a.e. $s\in [0,T]$,
    \begin{eqnarray*}
        |\widehat{m}(s) - \widehat{m}_\varepsilon(s)| & \leq & \int_0^T|m(r) - m_\varepsilon(r)| dr
        \leq \sqrt{T} \lVert m - m_\varepsilon\rVert_\mathcal{H} \underset{\varepsilon \to 0}{\longrightarrow} 0.
    \end{eqnarray*}
    In particular we have the uniform convergence on $[0,T]$ of $ \widehat{m}_\varepsilon$ to $\widehat m$. Then, for any $i\in \{1,\cdots,n\}$, 
    $$\varepsilon \widehat{m}_\varepsilon(T_i) = \varepsilon \widehat{m}(T_i) + \varepsilon (\widehat{m}_\varepsilon(T_i) - \widehat{m}(T_i)) \underset{\varepsilon \to 0}{\overset{a.s.}{\longrightarrow}} 0.$$
    Moreover
      {
    \begin{align*}
        & \varphi_n(\Phi_\varepsilon(T_1, \cdots, T_n)) = \varphi_n(T_1 + \varepsilon \widehat{m}_\varepsilon(T_1), \cdots, T_n+\varepsilon \widehat{m}_\varepsilon(T_n))
        \\ &\quad  =  \prod_{j=1}^n \lambda^*(T_j + \varepsilon \widehat{m}_\varepsilon(T_j); T_1 + \varepsilon \widehat{m}_\varepsilon(T_1), \cdots, T_n + \varepsilon \widehat{m}_\varepsilon(T_n))
        \\ & \qquad \times \exp\left(-\int_0^T \lambda^*(s;T_1 + \varepsilon \widehat{m}_\varepsilon (T_1), \cdots, T_n + \varepsilon \widehat{m}_\varepsilon(T_n)) ds\right)
        \\ & \quad =  \prod_{j=1}^n \left( \lambda_{T_j+\varepsilon \widehat{m}_\varepsilon (T_j)} +\gamma \left(  \sum_{i=1}^{j-1} \mu(T_j - T_i + \varepsilon(\widehat{m}_\varepsilon(T_j) - \widehat{m}_\varepsilon (T_i))) \right)\right)
        \\ & \qquad \times \exp \left( -\int_0^T \lambda_s ds-\int_0^T \gamma \left(  \sum_{i=1}^{n} \mu(s - T_i  - \varepsilon\widehat{m}_\varepsilon (T_i)) \mathbf 1_{\{s > T_i  + \varepsilon\widehat{m}_\varepsilon (T_i)\}} \right) ds \right).
    \end{align*}
    Thus, as the functions $\gamma$ and $\mu$ are continuous,}
    $$\varphi_n(\Phi_\varepsilon (T_1, \cdots, T_n)) \underset{\varepsilon \to 0}{\overset{a.s.}{\longrightarrow}} \varphi_n(T_1, \cdots, T_n).$$
    Therefore the a.s. convergence of $Z_n^\varepsilon$ is proved. The a.s. convergence of $G^\varepsilon$ follows.
\end{proof}

\subsection{Integration by parts in Bismut's way}\label{Section2.5}

\begin{Proposition} \label{prop:derivG}
Under our setting 
    \begin{equation*}
        \dfrac{\partial G^\varepsilon}{\partial \varepsilon}|_{\varepsilon = 0} = \int_{(0,T]} \left(\psi(m, s)  + \widehat{m}(s) \left(  \Gamma_1(s) + \Gamma_2(s) \right) + m(s)\right)) dN_s
    \end{equation*}
where $\psi$ is given by 
\begin{align} \nonumber
    \psi(m,s) & = \dfrac{1}{\lambda^*(s)} \widehat{m}(s)\lambda^\prime_s\\ \label{eq:def_psi}
    & + \dfrac{1}{\lambda^*(s)} \gamma' \left( \int_{(0,s)} \mu(s-t)dN_t \right)  \int_{(0,s)} (\widehat{m}(s) - \widehat{m}(t)) \mu'(s - t) dN_t
\end{align}
and $\Gamma_1$ and $\Gamma_2$ by:
\begin{align} \label{def:Gamma_1}
    \Gamma_1(s) & = \gamma \left(  \mu(0) + \int_{(0,s)} \mu(s-t)dN_t \right) - \gamma \left(\int_{(0,s)} \mu(s-t)dN_t \right) , \\  \label{def:Gamma_2}
    \Gamma_2(s) & = \int_s^T \gamma' \left(\int_{(0,u)} \mu(u-r)dN_r \right) \mu'(u-s)  du\\  \nonumber
    & = \int_0^{T-s} \gamma' \left(\int_{(0,v+s)} \mu(v+s-r)dN_r \right) \mu'(v)  dv.
\end{align}
\end{Proposition}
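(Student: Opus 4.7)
My plan is to work path‐by‐path on each event $\{N_T=n\}$, where $G^\varepsilon$ coincides with $Z_n^\varepsilon$ and $Z_n^0 = 1$. Taking logarithms and exploiting $\partial Z_n^\varepsilon/\partial \varepsilon|_{\varepsilon=0} = \partial \log Z_n^\varepsilon/\partial \varepsilon|_{\varepsilon=0}$, I would split
\begin{equation*}
\log Z_n^\varepsilon = \log \varphi_n(\Phi_\varepsilon(T_1,\ldots,T_n)) - \log \varphi_n(T_1,\ldots,T_n) + \sum_{i=1}^n \log(1+\varepsilon m_\varepsilon(T_i))
\end{equation*}
and differentiate each piece at $\varepsilon = 0$. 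For the last sum, using Lemma \ref{lemma:convergence:meps} and the a.s.\ continuity of $m_\varepsilon$ at $T_i$ (the jump times have a bounded density by Lemma \ref{lemma:density}, so a.e.\ statements in $s$ carry to a.s.\ statements at $s=T_i$), the derivative is $\sum_{i=1}^n m(T_i)$, which is exactly $\int_{(0,T]} m(s)\,dN_s$. For the first piece, the chain rule combined with $\partial(T_i + \varepsilon \widehat{m}_\varepsilon(T_i))/\partial \varepsilon|_{\varepsilon=0} = \widehat{m}(T_i)$ (by uniform convergence of $\widehat{m}_\varepsilon$ to $\widehat{m}$) gives
\begin{equation*}
\sum_{i=1}^n \widehat{m}(T_i)\,\frac{\partial \log \varphi_n}{\partial t_i}(T_1,\ldots,T_n),
\end{equation*}
so everything reduces to computing the partial derivatives of $\log \varphi_n$.

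I would then decompose $\log \varphi_n(t) = \sum_{j=1}^n \log \lambda^*(t_j;t_1,\ldots,t_n) - \int_0^T \lambda^*(s;t_1,\ldots,t_n)\,ds$ and compute $\partial/\partial t_i$ of each summand. For $j<i$ the contribution vanishes since $t_j < t_i$; for $j=i$ one gets $[\lambda'_{t_i} + \gamma'(\cdot)\sum_{k<i}\mu'(t_i-t_k)]/\lambda^*(t_i;\cdot)$; for $j>i$ one gets $-\gamma'(\cdot)\mu'(t_j-t_i)/\lambda^*(t_j;\cdot)$. The delicate step is the integral term: splitting $\int_0^T = \int_0^{t_i} + \int_{t_i}^T$ and applying Leibniz yields the boundary contribution $-[\gamma(\mu(0)+\sum_{k<i}\mu(t_i-t_k)) - \gamma(\sum_{k<i}\mu(t_i-t_k))]$ (this is $-\Gamma_1(T_i)$ after evaluating at $t=T$), plus $-\int_{t_i}^T \gamma'(\cdot)\mu'(s-t_i)\,ds = -\Gamma_2(T_i)$. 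Multiplying by the outer minus sign from the definition of $\log \varphi_n$ produces the $+\widehat{m}(T_i)(\Gamma_1(T_i)+\Gamma_2(T_i))$ terms in the target formula.

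It then remains to reassemble the $\log \lambda^*$ contributions. Summing over $i$, the $j=i$ term produces $\sum_i \widehat{m}(T_i)\lambda'_{T_i}/\lambda^*(T_i)$ (the first part of $\psi$) and $\sum_i \widehat{m}(T_i)\gamma'(\cdot_i)/\lambda^*(T_i) \cdot \sum_{k<i}\mu'(T_i-T_k)$. Swapping the double index $(i,j\mapsto j,i)$ in the $j>i$ contribution $-\sum_i \widehat{m}(T_i)\sum_{j>i}\gamma'(\cdot_j)\mu'(T_j-T_i)/\lambda^*(T_j)$ rewrites it as $-\sum_j \gamma'(\cdot_j)/\lambda^*(T_j) \cdot \sum_{k<j}\widehat{m}(T_k)\mu'(T_j-T_k)$. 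Combining these two double sums collapses $\widehat{m}(T_j) - \widehat{m}(T_k)$ inside, giving exactly the second part of $\psi$ defined by \eqref{eq:def_psi}, once one rewrites $\sum_{j=1}^n f(T_j) = \int_{(0,T]} f(s)\,dN_s$ and $\sum_{k<j} g(T_k) = \int_{(0,T_j)} g(r)\,dN_r$.

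The main technical obstacle will be the Leibniz calculation at the discontinuity point $s=t_i$ of $\mathbf{1}_{\{s>t_i\}}$ inside the integrand of $\int_0^T \lambda^*(s;t)\,ds$: one must be careful that the formal distributional derivative $-\mu(0)\delta(s-t_i)$ there is precisely what produces $\Gamma_1$, and that this is rigorously justified by the $C^1$ regularity of $\gamma$ and $\mu$ together with the explicit splitting $\int_0^{t_i} + \int_{t_i}^T$. The combinatorial rearrangement connecting $\sum_{i<j}$ and $\sum_{j<i}$ double sums to the intrinsic $\widehat{m}(s)-\widehat{m}(t)$ structure in $\psi$ is the second subtle point, but is purely algebraic once the partial derivatives are in hand.
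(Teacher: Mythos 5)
Your proposal is correct and takes essentially the same route as the paper. The paper works with the ratio $(Z_n^\varepsilon - 1)/\varepsilon$ directly and then passes to $\partial \ln\varphi_n/\partial t_{i_0}$ internally, whereas you take $\log Z_n^\varepsilon$ at the outset — a minor cosmetic simplification that is valid because $Z_n^0 = 1$. The two key computational steps you flag — the Leibniz splitting $\int_0^T = \int_0^{t_i} + \int_{t_i}^T$ that produces the boundary term $\Gamma_1$ and the bulk term $\Gamma_2$, and the double-sum reindexing $\sum_{i<j} \leftrightarrow \sum_{j<i}$ that collapses to the $\widehat{m}(s)-\widehat{m}(t)$ structure in $\psi$ — are exactly the two steps carried out in the paper, and your identification of them as the crux of the argument is accurate.
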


\begin{proof}
    Let $n \in \mathbb{N}^*$ and let's work on the event $\{N_T = n\}$. From \eqref{eq:zneps}, we have 
    \begin{eqnarray*}
        Z_n^\varepsilon & = & \dfrac{(\varphi_n \circ \Phi_\varepsilon)(T_1, \dots, T_n)}{\varphi_n(T_1, \dots, T_n)} \prod_{i=1}^n (1+\varepsilon m_\varepsilon(T_i)).
    \end{eqnarray*}
According to Proposition \ref{propo:G0=1}, $Z^\varepsilon_n$ almost surely converges to $Z^0_n = 1$ and for any $\varepsilon \in \mathbb{R}_+^*$
    \begin{eqnarray} \label{eq:growth_rate_Z_eps_n}
        \dfrac{Z_n^\varepsilon - 1}{\varepsilon} 
        & = & \dfrac{(\varphi_n \circ \Phi_\varepsilon)(T_1, \dots, T_n) - \varphi_n(T_1, \dots, T_n)}{\varepsilon \varphi_n(T_1, \dots, T_n)} \prod_{i=1}^n (1+\varepsilon m_\varepsilon(T_i)) \nonumber \\ 
        & & \quad +\dfrac{1}{\varepsilon} \left( \prod_{i=1}^n (1+\varepsilon m_\varepsilon(T_i)) - 1\right).
    \end{eqnarray}
For the first term
        \begin{align*}
            & \dfrac{(\varphi_n \circ \Phi_\varepsilon)(T_1, \dots, T_n) - \varphi_n(T_1, \dots, T_n)}{\varepsilon \varphi_n(T_1, \dots, T_n)}
            \\ &\quad  = \dfrac{\varphi_n(T_1 + \varepsilon \widehat{m}_\varepsilon(T_1), \dots, T_n + \varepsilon \widehat{m}_\varepsilon(T_n)) - \varphi_n(T_1, \dots, T_n)}{\varepsilon \varphi_n(T_1, \dots, T_n)}
            \\ & \quad =  \sum_{i=1}^n \int_0^1 \dfrac{\partial \varphi_n}{\partial t_i}(T_1 + \alpha \varepsilon \widehat{m}_\varepsilon(T_1), \dots, T_n + \alpha \varepsilon \widehat{m}_\varepsilon(T_n)) \dfrac{\varepsilon \widehat{m}_\varepsilon(T_i)}{\varepsilon \varphi_n(T_1, \cdots, T_n)} d\alpha
            \\ &\quad  =  \sum_{i=1}^n \dfrac{\widehat{m}_\varepsilon(T_i)}{\varphi_n(T_1, \cdots, T_n)}  \int_0^1 \dfrac{\partial \varphi_n}{\partial t_i}(T_1 + \alpha \varepsilon \widehat{m}_\varepsilon(T_1), \dots, T_n + \alpha \varepsilon \widehat{m}_\varepsilon(T_n)) d\alpha
            \\ &\quad  \underset{\varepsilon \to 0}{\overset{a.s.}{\longrightarrow}}  \sum_{i=1}^n \dfrac{\widehat{m}(T_i)}{\varphi_n(T_1, \dots, T_n)} \dfrac{\partial \varphi_n}{\partial t_i} (T_1, \dots, T_n)
        \end{align*}
where the almost sure convergence is justified by the fact that for any $i\in \{1, \ldots, n\}$, $ \widehat{m}_\varepsilon(T_i) \underset{\varepsilon \to 0}{\overset{a.s.}{\longrightarrow}} \widehat{m}(T_i)$ and  $\dfrac{\partial \varphi_n}{\partial t_i}$ is bounded because $\gamma$ and $\mu$ admit a bounded derivative. 
Moreover as in the proof of Proposition \ref{propo:G0=1},
        $$\prod_{i=1}^n (1+\varepsilon m_\varepsilon(T_i)) \underset{\varepsilon \to 0}{\overset{a.s.}{\longrightarrow}} 1.$$
For the second term
        \begin{align*}
           &  \dfrac{\prod_{i=1}^n (1+\varepsilon m_\varepsilon(T_i)) - 1}{\varepsilon} \\
           & \quad = \sum_{i=1}^n m_\varepsilon(T_i) + \varepsilon \sum_{1\leq i< j \leq n} m_\varepsilon(T_i) m_\varepsilon (T_j)+ \cdots + \varepsilon^{n-1} \prod_{i=1}^n m_\varepsilon (T_i)
        \end{align*}
        with, for any $i\in \{1, \dots, n\}$, $m_\varepsilon(T_i) \underset{\varepsilon\to 0}{\overset{a.s.}{\longrightarrow}} m(T_i)$. Thus
        $$\dfrac{\prod_{i=1}^n (1+\varepsilon m_\varepsilon(T_i)) - 1}{\varepsilon} \underset{\varepsilon \to 0}{\overset{a.s.}{\longrightarrow}} \sum_{i=1}^n m(T_i).$$
    Therefore
    \begin{equation} \label{eq:convergence_Z_n_epsilon}
    \dfrac{Z_n^\varepsilon - Z_n^0}{\varepsilon} \underset{\varepsilon \to 0}{\overset{a.s.}{\longrightarrow}} \sum_{i=1}^n \dfrac{\widehat{m}(T_i)}{\varphi_n(T_1, \dots, T_n)} \dfrac{\partial \varphi_n}{\partial t_i} (T_1, \dots, T_n) + \sum_{i=1}^n m(T_i)
    \end{equation}
    with, from \eqref{eq:phin}, for any $i_0\in \{1, \dots, n\}$ and $0< t_1 < \dots < t_n \leq T$,
    \begin{align*}
        &\dfrac{1}{\varphi_n(t_1, \dots, t_n)} \dfrac{\partial \varphi_n}{\partial t_{i_0}} (t_1, \dots, t_n) =  \dfrac{\partial}{\partial t_{i_0}} \ln(\varphi_n(t_1, \dots, t_n))
        \\ & \quad =  \sum_{i=1}^n \dfrac{\partial}{\partial t_{i_0}}\ln(\lambda^*(t_i; t_1, \dots, t_n)) - \dfrac{\partial}{\partial t_{i_0}} \left(\int_0^T \lambda^*(s;t_1, \dots, t_n) ds\right)
        \\ & \quad =  \dfrac{1}{\lambda^*(t_{i_0};t_1, \dots t_{i_0-1})} \dfrac{\partial \lambda^*}{\partial t_{i_0}} (t_{i_0}; t_1, \dots, t_{i_0-1}) \\
        & \quad + \sum_{i=i_0+1}^n \dfrac{1}{\lambda^*(t_i;t_1, \dots t_{i-1})} \dfrac{\partial \lambda^*}{\partial t_{i_0}} (t_i; t_1, \dots, t_{i-1}) 
     - \dfrac{\partial}{\partial t_{i_0}} \left(\int_0^T \lambda^*(s;t_1, \dots, t_n) ds\right). 
    \end{align*}
From \eqref{def:lambda_etoile}, evoke that for any $i\in \{i_0+1, \ldots, n\}$ and $s\in [0,T]$,
  {
    \begin{eqnarray*}
        \lambda^*(t_{i_0};t_1, \dots, t_{i_0-1}) & = & \lambda_{t_{i_0}}+\gamma \left( \sum_{j=1}^{i_0-1} \mu(t_{i_0}-t_j)\right),
        \\ \lambda^*(t_i;t_1, \dots, t_{i-1}) & = & \lambda_{t_i}+\gamma \left(\sum_{j=1}^{i-1} \mu(t_i-t_j) \right),
        \\ \lambda^* (s;t_1,\cdots ,t_n) & = &\lambda_s+ \gamma \left(\sum_{j=1}^n \mu(s-t_j) 1_{\{s > t_j\}}\right).
    \end{eqnarray*}
}
Thus
  {
    \begin{eqnarray*}
        \dfrac{\partial \lambda^*}{\partial t_{i_0}}(t_{i_0};t_1, \dots, t_{i_0 - 1}) & = &\lambda^\prime_{t_{i_0}}+ \left( \sum_{j=1}^{i_0 - 1} \mu'(t_{i_0} - t_j) \right) \gamma' \left( \sum_{j=1}^{i_0-1} \mu(t_{i_0}-t_j)\right), 
        \\ \dfrac{\partial \lambda^*}{\partial t_{i_0}}(t_i;t_1, \dots, t_{i - 1}) & = & - \mu'(t_i -t_{i_0}) \gamma' \left(\sum_{j=1}^{i-1} \mu(t_i-t_j) \right),
    \end{eqnarray*}
    and 
        \begin{align*}
        & \int_0^T \lambda^*(s;t_1, \dots, t_n) ds  = \int_{t_{i_0}}^T \lambda^*(s;t_1, \dots, t_n) ds +  \int_0^{t_{i_0}} \lambda^*(s;t_1, \dots, t_n) ds
        \\ & \quad = \int_0^T \lambda_s ds+\int_{t_{i_0}}^T  \gamma \left(\sum_{j=1}^n \mu(s-t_j) 1_{\{s > t_j\}}\right) ds +  \int_0^{t_{i_0}} \gamma \left(\sum_{j=1}^{i_0-1} \mu(s-t_j) 1_{\{s > t_j\}}\right) ds.
\end{align*}
Thus 
    \begin{align*}  \dfrac{\partial}{\partial t_{i_0}}\int_0^T \lambda^*(s;t_1, \dots, t_n) ds & = \gamma \left(\sum_{j=1}^{i_0-1} \mu(t_{i_0}-t_j)\right) - \gamma \left(\sum_{j=1}^{i_0-1} \mu(t_{i_0}-t_j) + \mu(0) \right) \\
    & - \int_{t_{i_0}}^T  \gamma' \left(\sum_{j=1}^n \mu(s-t_j) 1_{\{s > t_j\}}\right) \mu'(s-t_{i_0}) ds.
    \end{align*}
Note that in the linear case $\gamma(x)=x$:
  \begin{align*}  \dfrac{\partial}{\partial t_{i_0}}\int_0^T \lambda^*(s;t_1, \dots, t_n) ds & = - \mu(0) - \int_{t_{i_0}}^T  \mu'(s-t_{i_0}) ds = -\mu(T-t_{i_0}).
    \end{align*}
}
    Then
    \begin{eqnarray} \label{eq:compute_derivative}
        && \sum_{i_0=1}^n \dfrac{\widehat{m}(T_{i_0})}{\varphi_n(T_1, \dots, T_n)} \dfrac{\partial \varphi_n}{\partial t_{i_0}}(T_1, \dots, T_n)
        \\  \nonumber & = & \sum_{i_0=1}^n  \dfrac{\widehat{m}(T_{i_0})}{\lambda^*(T_{i_0};T_1, \dots, T_n)}\left( \lambda^\prime_{T_{i_0}}+ \left( \sum_{j=1}^{i_0 - 1} \mu'(T_{i_0} - T_j) \right)\gamma' \left( \sum_{j=1}^{i_0-1} \mu(T_{i_0}-T_j)\right)\right) \\ \nonumber
        && - \sum_{i_0 = 1}^n \widehat{m}(T_{i_0}) \sum_{i=i_0+1}^n \dfrac{\mu'(T_i - T_{i_0})}{\lambda^*(T_i;T_1, \dots, T_n)} \gamma' \left( \sum_{j=1}^{i-1} \mu(T_{i}-T_j)\right) \\  \nonumber && + \sum_{i_0 = 1}^n \widehat{m}(T_{i_0}) \left(\gamma \left(\sum_{j=1}^{i_0-1} \mu(T_{i_0}-T_j) + \mu(0) \right) - \gamma \left(\sum_{j=1}^{i_0-1} \mu(T_{i_0}-T_j)\right) \right) 
        \\  \nonumber && + \sum_{i_0 = 1}^n \widehat{m}(T_{i_0}) \int_{T_{i_0}}^T  \gamma' \left(\sum_{j=1}^n \mu(s-T_j) 1_{\{s > T_j\}}\right) \mu'(s-T_{i_0}) ds .
     \end{eqnarray}
    Then we would like the same convergence result \eqref{eq:convergence_Z_n_epsilon} for $\dfrac{\partial G^\varepsilon}{\partial \varepsilon} |_{\varepsilon = 0}$. We have, according to Proposition \ref{propo:G0=1},
    \begin{eqnarray*}
        \dfrac{\partial G^\varepsilon}{\partial \varepsilon}_{|\varepsilon = 0} \mathbf{1}_{\{N_T = n\}} & = & \lim_{\varepsilon \to 0}\dfrac{G^\varepsilon - 1}{\varepsilon} \mathbf{1}_{\{N_T = n\}} 
= \lim_{\varepsilon \to 0}\dfrac{G^\varepsilon \mathbf{1}_{\{N_T = n\}} - \mathbf{1}_{\{N_T = n\}}}{\varepsilon}
        \\ & = & \lim_{\varepsilon \to 0} \dfrac{Z_n^\varepsilon \mathbf{1}_{\{N_T = n\}} - \mathbf{1}_{\{N_T = n\}}}{\varepsilon}
= \lim_{\varepsilon \to 0} \dfrac{Z_n^\varepsilon - 1}{\varepsilon} \mathbf{1}_{\{N_T = n\}}
        \\ & = & \dfrac{\partial Z_n^\varepsilon}{\partial \varepsilon}|_{\varepsilon = 0} \mathbf{1}_{\{N_T = n\}}.
    \end{eqnarray*}
    Therefore
    \begin{eqnarray*}
        \dfrac{\partial G^\varepsilon}{\partial \varepsilon}|_{\varepsilon = 0} & = &  \sum_{n=1}^{\infty} \dfrac{\partial G^\varepsilon}{\partial \varepsilon}|_{\varepsilon = 0} \mathbf{1}_{\{N_T = n\}} = \sum_{n=1}^{\infty} \dfrac{\partial Z_n^\varepsilon}{\partial \varepsilon}|_{\varepsilon = 0} \mathbf{1}_{\{N_T = n\}}.
        \\ & = & \sum_{i_0=1}^{N_T} \left( \dfrac{\widehat{m}(T_{i_0})}{\varphi_{N_T}(T_1, \dots, T_{N_T})} \dfrac{\partial \varphi_{N_T}}{\partial t_{i_0}}(T_1, \dots, T_{N_T}) + m(T_{i_0})\right). 
\end{eqnarray*}
Now in \eqref{eq:compute_derivative}, the first two sums of the right-hand side can be written:
$$\sum_{i_0=1}^{N_T}  \dfrac{1}{\lambda^*(T_{i_0};T_1, \dots, T_{N_T})}\widehat{m}(T_{i_0})\lambda^\prime_{T_{i_0}} = \int_0^T \dfrac{\widehat{m}(s)\lambda^\prime_{s}}{\lambda^*(s)} dN_s$$
and
\begin{eqnarray*}
&&\sum_{i_0=1}^{N_T}  \dfrac{1}{\lambda^*(T_{i_0};T_1, \dots, T_{N_T})} \left(  \sum_{j=1}^{i_0 - 1} (\widehat{m}(T_{i_0}) - \widehat{m}(T_j)) \mu'(T_{i_0} - T_j) \right) \gamma' \left( \sum_{j=1}^{i_0-1} \mu(T_{i_0}-T_j)\right) \\
&&=\int_0^T \dfrac{1}{\lambda^*(s)} \left(  \int_{(0,s)} (\widehat{m}(s) - \widehat{m}(t)) \mu'(s - t) dN_t \right) \gamma' \left( \int_{(0,s)} \mu(s-t)dN_t \right) dN_s.
\end{eqnarray*}
The third term is equal to:
$$\int_0^T \widehat m(s)\left(\gamma \left(  \mu(0) + \int_{(0,s)} \mu(s-t)dN_t \right) - \gamma \left(\int_{(0,s)} \mu(s-t)dN_t \right) \right) dN_s. $$
The last one can be formulated as:
$$\int_0^T \widehat m(s)\left( \int_s^T \gamma' \left(\int_{(0,u)} \mu(u-r)dN_r \right) \mu'(u-s) du \right) dN_s. $$
Therefore
    \begin{eqnarray*}
        \dfrac{\partial G^\varepsilon}{\partial \varepsilon}|_{\varepsilon = 0} & = & \int_{(0,T]} (\psi(m, s)  + \widehat{m}(s) \left(  \Gamma_1(s) + \Gamma_2(s) \right) + m(s)) dN_s
    \end{eqnarray*}
    where $\psi$, $\Gamma_1$ and $\Gamma_2$ are given by \eqref{eq:def_psi},
\eqref{def:Gamma_1} and \eqref{def:Gamma_2}. 
\end{proof}

\begin{Remark} \label{rem:deriv_linear_case}
Let us emphasize that $\psi (m,\cdot )$ and $\Gamma_1$ are predictable, but $\Gamma_2$ is $\mathcal F_T$-measurable and that in the linear case $\gamma(x) = x$, $\Gamma_1$ and $\Gamma_2$ are deterministic with 
$$\Gamma_1(s) + \Gamma_2(s) = \mu(T-s).$$
\end{Remark}
\begin{Theorem} \label{th:DmF}
    For any $F \in \mathcal{S}$,
    $$\mathbb{E}[D_m F] = \mathbb{E}\left[ \dfrac{\partial G^\varepsilon}{\partial \varepsilon}|_{\varepsilon = 0} F \right].$$
\end{Theorem}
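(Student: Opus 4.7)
The approach is to exploit the absolute continuity provided by Proposition \ref{propo:abs:cont}. Since $\mathbb{P}^\varepsilon = \mathbb{P}\mathcal{T}_\varepsilon^{-1}$ admits $G^\varepsilon$ as density with respect to $\mathbb{P}$, every bounded measurable $F$ satisfies the change-of-variables identity $\mathbb{E}[\mathcal{T}_\varepsilon F] = \int_\Omega F\, d\mathbb{P}^\varepsilon = \mathbb{E}[F\,G^\varepsilon]$. Subtracting $\mathbb{E}[F]$ (using $\mathbb{E}[G^\varepsilon]=1$) and dividing by $\varepsilon>0$ produces the ``predifferentiated'' identity
\begin{equation}\label{eq:plan_diffquot}
\mathbb{E}\!\left[\frac{\mathcal{T}_\varepsilon F - F}{\varepsilon}\right] = \mathbb{E}\!\left[F\,\frac{G^\varepsilon - 1}{\varepsilon}\right],
\end{equation}
and the whole plan is simply to pass to the limit $\varepsilon \to 0$ on each side of \eqref{eq:plan_diffquot}.

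The left-hand side is straightforward: by Proposition \ref{prop:lemma}, $\mathcal{S} \subset \mathbb{D}_m^0$, so $(\mathcal{T}_\varepsilon F - F)/\varepsilon$ converges to $D_m F$ in $L^2(\Omega)$, hence in $L^1$, whence the expectation converges to $\mathbb{E}[D_m F]$. On the right-hand side, Proposition \ref{prop:derivG} already identifies the $\mathbb{P}$-a.s.\ pointwise limit of $(G^\varepsilon - 1)/\varepsilon$, so the task reduces to upgrading this almost sure convergence to convergence of expectations against $F$. Since $F \in \mathcal{S}$ is bounded and vanishes outside $\{N_T \leq d\}$, the decomposition $G^\varepsilon = \sum_{n=0}^{+\infty} Z_n^\varepsilon \mathbf{1}_{\{N_T=n\}}$ truncates the relevant sum to $n\in\{0,\dots,d\}$. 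Conditioning on $\{N_T = n\}$ and invoking Lemma \ref{lemma:density} rewrites each term as a deterministic integral over $[0,T]^n$ of a smooth-in-$\varepsilon$ function against the bounded density $k_n$ from \eqref{eq:bound_dens_jump_times}. A dominated convergence argument in this finite-dimensional integral then suffices to transport the pointwise limit, yielding $\mathbb{E}[F\, \partial_\varepsilon G^\varepsilon|_{\varepsilon=0}]$ after summation over $n$.

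The only non-routine step is producing an $\varepsilon$-uniform integrable majorant for $|Z_n^\varepsilon(t)-1|/\varepsilon$ on $[0,T]^n$ in order to apply dominated convergence. For the Jacobian-like factor $\prod_{i=1}^n (1 + \varepsilon m_\varepsilon(t_i))$, a telescoping identity combined with $1/3 \leq 1 + \varepsilon m_\varepsilon \leq 5/3$ produces a bound of the form $C_n \sum_{j=1}^n |m_\varepsilon(t_j)|$, which is controlled uniformly in $\varepsilon$ thanks to Lemma \ref{lemma:convergence:meps}. For the ratio $(\varphi_n \circ \Phi_\varepsilon)/\varphi_n$, a Taylor expansion in $\varepsilon$ combined with Assumption \ref{ass:lambda_mu} --- in particular $\lambda_* > 0$, which keeps $\varphi_n$ bounded away from zero on $[0,T]^n$, and the boundedness of $\lambda'$, $\gamma'$, $\mu$, $\mu'$, which bounds $\partial_\varepsilon(\varphi_n \circ \Phi_\varepsilon)$ from above --- provides a dominant of the form $C_n'\big(1 + \sum_j |m(t_j)| + \sum_j |\widehat m(t_j)|\big)$, integrable against $k_n(t)\,dt$. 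With this uniform estimate in hand, the two passages to the limit in \eqref{eq:plan_diffquot} combine to give the claimed identity.
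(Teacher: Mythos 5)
Your proof is correct and follows essentially the same route as the paper: use $\mathbb{E}[\mathcal{T}_\varepsilon F]=\mathbb{E}[FG^\varepsilon]$ to reduce to the identity $\mathbb{E}[(\mathcal{T}_\varepsilon F-F)/\varepsilon]=\mathbb{E}[F(G^\varepsilon-1)/\varepsilon]$, restrict to one stratum $\{N_T=n\}$ where $G^\varepsilon=Z_n^\varepsilon$, and upgrade the a.s.\ limit of $(Z_n^\varepsilon-1)/\varepsilon$ from Proposition \ref{prop:derivG} to an $L^1$ limit by splitting $Z_n^\varepsilon$ into the Jacobian product and the $(\varphi_n\circ\Phi_\varepsilon)/\varphi_n$ ratio, bounding each against the bounded conditional density $k_n$ from \eqref{eq:bound_dens_jump_times}. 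Your treatment of the left-hand side is a small but genuine simplification: since $\mathcal S\subset\mathbb D_m^0$, the definitional $L^2$-convergence of $(\mathcal T_\varepsilon F-F)/\varepsilon$ to $D_mF$ immediately gives convergence of expectations, whereas the paper re-derives a pointwise $\varepsilon$-uniform bound to invoke dominated convergence for the same purpose. On the right-hand side both arguments share the same slight looseness: the dominant $C_n\sum_j|m_\varepsilon(t_j)|$ you exhibit for the product term is not a fixed $\varepsilon$-independent function, so it is not literally a dominated-convergence majorant; what it does give is a uniform $L^1$ bound, which has to be combined with the $L^1([0,T])$-convergence $m_\varepsilon\to m$ and the boundedness of $k_n$ to conclude $L^1$ convergence of $m_\varepsilon(T_j)\to m(T_j)$ term by term --- this is in fact the route the paper takes. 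Your bound for the ratio term is also correct though slacker than necessary (the paper obtains a constant bound, since $\widehat m_\varepsilon$ is uniformly bounded on $[0,T]$). These are cosmetic differences; the argument is sound.
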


\begin{proof}
    We consider $F = f_n(T_1, \cdots, T_n) 1_{\{N_T = n\}} \in \mathcal{S}$. Then, as the vectors $(\tau_\varepsilon^{-1}(T_1), \cdots, \tau_\varepsilon^{-1}(T_n))$ and $(T_1, \cdots, T_n)$ are in the compact set $[0,T]^n$ and the function $f_n$ is smooth, there exists a constant $C = C(f_n,n,T) \in \mathbb{R}_+^*$ such that
    \begin{eqnarray*}
        \left|\dfrac{\mathcal{T}_\varepsilon F - F}{\varepsilon}\right| & = & \dfrac{|f_n(\tau_\varepsilon^{-1}(T_1), \cdots, \tau_\varepsilon^{-1}(T_n)) - f_n(T_1, \cdots, T_n)|}{\varepsilon} \mathbf{1}_{\{N_T = n\}}
        \\ & \leq & C \dfrac{\lVert (\tau_\varepsilon^{-1}(T_1), \cdots, \tau_\varepsilon^{-1}(T_n)) - (T_1, \cdots, T_n) \rVert}{\varepsilon} \mathbf{1}_{\{N_T = n\}}
        \\ & = & C \dfrac{\lVert (\tau_\varepsilon^{-1}(T_1) - T_1, \cdots, \tau_\varepsilon^{-1}(T_n) - T_n)\rVert}{\varepsilon} \mathbf{1}_{\{N_T = n\}}
        \\ & = & C \sup_{1\leq i\leq n} \dfrac{|\tau_\varepsilon^{-1}(T_i) - T_i|}{\varepsilon} \mathbf{1}_{\{N_T = n\}}
        \\ & \overset{(*)}{\leq} & C \int_0^T |m_\varepsilon(s)|ds \mathbf{1}_{\{N_T = n\}}
        \\ & \leq & C \int_0^T |m_\varepsilon(s) - m(s)|ds \mathbf{1}_{\{N_T = n\}} + C \int_0^T |m(s)| ds \mathbf{1}_{N_T = n}
        \\ & \leq & C^2 \1{N_T = n} + C \int_0^T |m(s)| ds \mathbf{1}_{\{N_T = n\}}
    \end{eqnarray*}
    where the last inequality is true for $\varepsilon$ small enough because $m_\varepsilon \underset{\varepsilon \to 0}{\overset{\mathcal{H}}{\longrightarrow}} m$ according to Lemma \ref{lemma:convergence:meps}, and the inequality $(*)$ is due to the equality
    $$T_i = \tau_\varepsilon(\tau_\varepsilon^{-1}(T_i)) = \tau_\varepsilon^{-1}(T_i) + \varepsilon\int_0^{\tau_\varepsilon^{-1}(T_i)} m_\varepsilon(s) ds.$$
    Thus, by dominated convergence theorem and since $\mathbb{E}[\mathcal{T}_\varepsilon F]  = \mathbb{E}[G^\varepsilon F]$,
    \begin{eqnarray*}
        \mathbb{E}[D_m F] & = & \mathbb{E}\left[ \lim_{\varepsilon \to 0} \dfrac{\mathcal{T}_\varepsilon F - F}{\varepsilon} \right]
      = \lim_{\varepsilon \to 0} \mathbb{E} \left[ \dfrac{\mathcal{T}_\varepsilon F - F}{\varepsilon} \right]
   = \lim_{\varepsilon \to 0} \mathbb{E}\left[\dfrac{G^\varepsilon - 1}{\varepsilon} F\right].
    \end{eqnarray*}
    Hence, as $F = f_n(T_1, \cdots, T_n) \mathbf{1}_{\{N_T = n\}}$,
    \begin{eqnarray*}
        \mathbb{E}[D_m F] & = & \lim_{\varepsilon \to 0} \mathbb{E}\left[\dfrac{G^\varepsilon - 1}{\varepsilon} f_n(T_1, \cdots, T_n) \mathbf{1}_{\{N_T = n\}}\right]
        \\ & = & \lim_{\varepsilon \to 0} \mathbb{E}\left[\dfrac{G^\varepsilon \mathbf{1}_{N_T = n} - 1}{\varepsilon} f_n(T_1, \cdots, T_n) \mathbf{1}_{\{N_T = n\}}\right]
        \\ & = & \lim_{\varepsilon \to 0} \mathbb{E}\left[\dfrac{Z_n^\varepsilon - 1}{\varepsilon} f_n(T_1, \cdots, T_n) \mathbf{1}_{\{N_T = n\}}\right]
  = \lim_{\varepsilon \to 0} \mathbb{E}\left[\dfrac{Z_n^\varepsilon - 1}{\varepsilon}F\right].
    \end{eqnarray*}
    Let us come back to the  expression \eqref{eq:growth_rate_Z_eps_n} of the growth rate of $Z^\varepsilon_n$. 
        For the last term in \eqref{eq:growth_rate_Z_eps_n}, we have 
        $$\dfrac{1}{\varepsilon} \left[ \prod_{i=1}^n(1+\varepsilon  m_{\varepsilon}(T_i)) - 1\right] \underset{\varepsilon \to 0}{\overset{a.s.}{\longrightarrow}} \sum_{i=1}^n m(T_i)$$
        and
        \begin{align*}
          \left|\dfrac{1}{\varepsilon} \left[ \prod_{i=1}^n(1+\varepsilon  m_{\varepsilon}(T_i)) - 1\right] \right| & \leq  \sum_{i=1}^n |m_\varepsilon(T_i)| + \varepsilon \sum_{1\leq i< j \leq n} |m_\varepsilon(T_i) m_\varepsilon(T_j)|+ \cdots \\
          & + \varepsilon^{n-1} \prod_{i=1}^n |m_\varepsilon(T_i)|.
        \end{align*}
We know that $(T_1, \cdots, T_n)$ knowing $\{N_T = n\}$ admits a density given by 
\eqref{eq:cond_dens_jump_times} in Lemma \ref{lemma:density} and that this density is bounded (see \eqref{eq:bound_dens_jump_times}). 
                { 
        Therefore, for any $I\subset \{1, ...,n\}$, with cardinal denoted $|I|$:
        \begin{eqnarray*}
            \mathbb{E}\left[\left| \prod_{i\in I} m_\varepsilon(T_i) \right| \right] & \leq & \dfrac{1}{\mathbb{P}(N_T = n)} \left(\lambda^T + n a\lVert \mu \rVert_{\infty}\right)^n
            \\ && \times \int_{[0,T]^n} \left| \prod_{i\in I} m_\varepsilon(t_i) \right| \mathbf{1}_{\{0<t_1<\cdots <t_{n} < T\}} dt
            \\ & \leq & \dfrac{1}{\mathbb{P}(N_T = n)} \left(\lambda^T + n a\lVert \mu \rVert_{\infty}\right)^n \int_{[0,T]^n} \prod_{i\in I} |m_\varepsilon(t_i)| dt
            \\ & = & \dfrac{1}{\mathbb{P}(N_T = n)} \left(\lambda^T + n a\lVert \mu \rVert_{\infty}\right)^n T^{n-|I|} \prod_{i\in I} \int_0^T |m_\varepsilon(t_i)| dt_i
            \\ & = & \dfrac{1}{\mathbb{P}(N_T = n)} \left(\lambda^T + n a\lVert \mu \rVert_{\infty}\right)^n T^{n-|I|} \left(\int_0^T |m_\varepsilon(t)| dt \right)^{|I|}
            \\ & \leq & \dfrac{1}{\mathbb{P}(N_T = n)} \left(\lambda^T + n a\lVert \mu \rVert_{\infty}\right)^n T^{n-|I|} \left(\int_0^T |m(t)| dt + 1 \right)^{|I|}.
        \end{eqnarray*} }
        The last inequality is justified by the convergence proved in Lemma \ref{lemma:convergence:meps}. 
        {  This proves that conditionally to $\{ N_T =n \}$, $\dfrac{1}{\varepsilon} \left[ \prod_{i=1}^n(1+\varepsilon  m_{\varepsilon}(T_i)) - 1\right]$ converges in $L^1(\Omega)$ to $ \sum_{i=1}^n m(T_i)$.}

 {  For the first term in \eqref{eq:growth_rate_Z_eps_n}, since $|\varepsilon m_\varepsilon (\cdot)|\leq 5/3$
        $$\left| \prod_{i=1}^n (1+\varepsilon m_\varepsilon (T_i)) \right| \leq \left(1+\frac53 \right)^n$$}
        and
        \begin{align*}
            & \dfrac{(\varphi_n\circ \Phi_{\varepsilon})(T_1,\cdots,T_n)-\varphi_n (T_1,\cdots,T_n)}{\varepsilon} \\
            = & \sum_{k=1}^n \int_0^1 \dfrac{\partial \varphi_n}{\partial t_k} \left( T_1,\ldots,T_{k-1},T_k + \alpha \varepsilon \widehat m_\varepsilon(T_k),T_{k+1} + \varepsilon \widehat m_\varepsilon(T_{k+1}),\ldots\right) d\alpha \ \widehat m_\varepsilon(T_k).
        \end{align*}
        Let us define 
        $$\psi_n(t_1,\cdots,t_n)=\prod_{i=1}^{n}\lambda^* (t_i;t_1,\cdots ,t_n).$$
        Thus
        $$\varphi_n(t_1, \cdots, t_n) = \psi_n(t_1,\cdots,t_n) e^{- \int_0^T\lambda^*(s;t_1,\cdots,t_n) ds}.$$
        Therefore, as $\mu$ is differentiable,
        \begin{align*}\label{eqCVD}
            & \dfrac{1}{ \varphi_n (T_1,\cdots,T_n)}  \dfrac{\partial \varphi_n}{\partial t_k} \left( T_1,\ldots,T_{k-1},T_k + \alpha \varepsilon \widehat m_\varepsilon(T_k),T_{k+1} + \varepsilon \widehat m_\varepsilon(T_{k+1}),\ldots\right) \\
            = & \left[ \dfrac{1}{ \psi_n(T_1,\cdots,T_n)} \dfrac{\partial \psi_n}{\partial t_k} \left( T_1,\ldots,T_{k-1},T_k + \alpha \varepsilon \widehat m_\varepsilon(T_k),T_{k+1} + \varepsilon \widehat m_\varepsilon(T_{k+1}),\ldots\right) \right.\\
            & \qquad \left. -  \dfrac{\partial }{\partial t_k} \int_{0}^{T} \lambda^* (s; T_1,\ldots,T_{k-1},T_k + \alpha \varepsilon \widehat m_\varepsilon(T_k),T_{k+1} + \varepsilon \widehat m_\varepsilon(T_{k+1}),\ldots)ds\right]
        \end{align*}
        with, for the first term { 
        $$\psi_n(t_1, \cdots, t_n) = \prod_{i=1}^n \left( \lambda_{t_i} + \gamma \left(\sum_{j=1}^{i-1} \mu(t_i - t_j) 1_{\{t_i > t_j\}} \right)\right) .$$
        Thus
        $$|\psi_n(t_1, \cdots, t_n)| \geq (\lambda^T)^n$$ }
        {  Using the regularity of $\gamma$, $\mu$ and $\lambda$, we can verify that there exist constants that we denote $C$ even if they may change from line to line, depending only on $n,T,\lambda^T, \parallel \mu \parallel_\infty,\parallel \mu^\prime \parallel_\infty  , \parallel \gamma \parallel_\infty $ and $\parallel \gamma^\prime \parallel_\infty$  such that  for any $0 < t_1 < \cdots < t_n < T$ and $k\in\{ 1,\cdots ,n\}$:
        $$\left| \dfrac{\partial \psi_n}{\partial t_k}(t_1, \cdots, t_n)\right| \leq C,\qquad \left|  \dfrac{\partial \lambda^*}{\partial t_k} (t_1, \cdots, t_n) \right| \leq C.$$
    This yields immediately:
    $$ \left\vert \dfrac{(\varphi_n \circ \Phi_\varepsilon)(T_1, \dots, T_n) - \varphi_n(T_1, \dots, T_n)}{\varepsilon \varphi_n(T_1, \dots, T_n)} \prod_{i=1}^n (1+\varepsilon m_\varepsilon(T_i))\right\vert\leq C,$$
    hence the convergence in $L^1(\Omega)$ of $$\dfrac{(\varphi_n \circ \Phi_\varepsilon)(T_1, \dots, T_n) - \varphi_n(T_1, \dots, T_n)}{\varepsilon \varphi_n(T_1, \dots, T_n)} \prod_{i=1}^n (1+\varepsilon m_\varepsilon(T_i)).$$
        }
{  So,  we have proved that conditionally to $\{N_T =n\}$, $\dfrac{Z_n^\varepsilon - 1}{\varepsilon}$ converges to $\dfrac{\partial G^\varepsilon}{\partial \varepsilon}$ in $L^1(\Omega)$, thus:}

$$
        \mathbb{E}[D_m F] = \mathbb{E}\left[\lim_{\varepsilon \to 0}\dfrac{Z_n^\varepsilon - 1}{\varepsilon}F\right]
    = \mathbb{E}\left[ \dfrac{\partial G^\varepsilon}{\partial \varepsilon} F \right].
$$
    Then, by linearity, we deduce the same equality for any $F\in \mathcal{S}$.
\end{proof}

\begin{Remark}
    Taking  $F = 1$, since $D_m F = 0$ we get:
    $\E\left[\dfrac{\partial G^{\varepsilon}}{\partial \varepsilon}|_{\varepsilon = 0}\right] = 0.$
\end{Remark}
{Note that we can use this property to derive the functional equation satisfied by $s \mapsto \mathbb E \lambda^*(s)$ in the linear case when $\gamma(x) = x$. Indeed with Proposition \ref{prop:derivG} and Remark \ref{rem:deriv_linear_case}, we have:}
   \begin{align*}
        & \E\left[\dfrac{\partial G^{\varepsilon}}{\partial \varepsilon}|_{\varepsilon = 0}\right] =  \E \int_0^T (\psi(m,s) + \widehat{m}(s) \mu(T-s) +  m(s)) \lambda^*(s) ds 
        \\ & \quad =  \E  \int_0^T \left(\widehat{m}(s) \lambda_s' + \int_{(0,s)}(\widehat{m}(s) - \widehat{m}(t)) \mu'(s-t) dN_t + (m(s) + \widehat{m}(s) \mu(T-s))\lambda^*(s) \right) ds
        \\ &\quad =  \int_0^T \int_0^s (\widehat{m}(s) - \widehat{m}(t)) \mu'(s-t) \E[\lambda^*(t)] dt ds + \int_0^T (m(s) + \widehat{m}(s) \mu(T-s))\E[\lambda^*(s)] ds
        \\ & \qquad +\int_0^T \widehat{m}(s) \lambda_s' ds
        \\ & \quad =  \int_0^T \int_0^s (\widehat{m}(s) - \widehat{m}(t)) \mu'(s-t) g(t) dt ds + \int_0^T (m(s) + \widehat{m}(s) \mu(T-s)) g(s) ds\\
        & \qquad - \int_0^T m(s) \lambda_s ds,
\end{align*}
with $g(s) = \mathbb{E}[\lambda^*(s)]$, an integration by parts and with the property that $\widehat m(T)=\widehat m(0)=0$. Thus
  \begin{align*}      
        &  \E\left[\dfrac{\partial G^{\varepsilon}}{\partial \varepsilon}|_{\varepsilon = 0}\right] = 
        \int_0^T \left( \left[(\widehat{m}(s) - \widehat{m}(t)) \mu(s-t) \right]_t^T - \int_t^T m(s)\mu(s-t) ds \right) g(t) dt
        \\ & \qquad + \int_0^T (m(s) + \widehat{m}(s) \mu(T-s)) g(s) ds  - \int_0^T m(s) \lambda_s ds
        \\ & \quad =  - \int_0^T \widehat{m}(t) \mu(T-t) g(t) dt - \int_0^T m(s) \left(\int_0^s \mu(s-t) g(t) dt \right)ds 
        \\ & \qquad + \int_0^T (m(s) + \widehat{m}(s) \mu(T-s)) g(s) ds - \int_0^T m(s) \lambda_s ds\\
        & \quad =  \int_0^T m(s)  \left[ g(s) - \left(\int_0^s \mu(s-t) g(t) dt \right) - \lambda_s \right]ds .
    \end{align*}
Therefore $g$ satisfies the known equality (see  \cite{laub:taimre:pollett}):
    $$g(s) = \lambda_s + \int_0^s \mu(s-t) g(t) dt.$$

\subsection{Directional Dirichlet space}
{ The first step consists in closing operator $D_m$ for each $m\in \mathcal{H}$.}

\begin{Proposition} \label{propo:closable}
     Let $m$ in $\mathcal{H}$. The quadratic bilinear form on $L^2(\Omega)$, $(\mathcal{S}, \mathcal{E}_m)$ defined by 
    $$\forall X,Y\in\mathcal{S},\quad  \mathcal{E}_m (X,Y)=\E [D_m X D_m Y],$$
    is closable. We denote by $(\mathbb{D}_m^{1,2},\mathcal{E}_m) $ its closed extension. As a consequence, $D_m$ is also closable and we still denote by $D_m$ its extension which is well-defined on the whole space $\mathbb{D}_m^{1,2}$. {  Moreover, $(\mathbb{D}_m^{1,2},\mathcal{E}_m) $ is a local Dirichlet form    and Theorem \ref{th:DmF} remains valid for functions in $\mathbb{D}_m^{1,2}$.}
\end{Proposition}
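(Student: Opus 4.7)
The plan is to combine the Leibniz rule of Proposition \ref{functCalculus} with the integration by parts formula of Theorem \ref{th:DmF} to produce a duality identity, and then to pass to the limit using the density of $\mathcal{S}$ in $L^2(\Omega)$.

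First, I would observe that for $F,G\in\mathcal{S}$ the product $FG$ still lies in $\mathcal{S}$, so applying Theorem \ref{th:DmF} to $FG$ and using the Leibniz rule yields the integration by parts formula
\begin{equation*}
\mathbb{E}[(D_m F) G] \;=\; \mathbb{E}\!\left[F \left(\left.\frac{\partial G^\varepsilon}{\partial \varepsilon}\right|_{\varepsilon=0} G - D_m G\right)\right].
\end{equation*}
Before exploiting it, I would check that $\left.\partial_\varepsilon G^\varepsilon\right|_{\varepsilon=0}$ belongs to $L^p(\Omega)$ for every $p\geq 1$. From the explicit expression of Proposition \ref{prop:derivG}, the integrand against $dN_s$ is controlled by a constant multiple of $1+N_{s-}$, since $\widehat m$ is bounded on $[0,T]$, $\Gamma_1$, $\Gamma_2$ are bounded by Assumption \ref{ass:lambda_mu}, $\lambda^*\geq \lambda_*>0$, and $m\in L^2\subset L^1$; the moment bounds for the counting process $N_T$ that come from the subcriticality condition $a\|\mu\|_1<1$ then give $\left.\partial_\varepsilon G^\varepsilon\right|_{\varepsilon=0}\in L^p$ for all $p$. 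Since every $G\in\mathcal{S}$ is bounded (the $f_n$ are smooth on the compact set $\{0<t_1<\cdots<t_n\leq T\}$), the right-hand side of the integration by parts sits in $L^2(\Omega)$.

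To prove closability, let $(F_n)\subset\mathcal{S}$ with $F_n\to 0$ in $L^2$ and $D_m F_n\to \eta$ in $L^2$. For a fixed $G\in\mathcal{S}$, the identity above gives $\mathbb{E}[(D_m F_n)G]\to \mathbb{E}[\eta G]$ on the left, while on the right $\mathbb{E}[F_n(\left.\partial_\varepsilon G^\varepsilon\right|_{\varepsilon=0} G - D_m G)]\to 0$ by Cauchy--Schwarz. Hence $\mathbb{E}[\eta G]=0$ for every $G\in\mathcal{S}$, and Remark \ref{remark:density} forces $\eta=0$. This closes $\mathcal{E}_m$, and since $\mathcal{E}_m(F,F)=\|D_m F\|_{L^2}^2$ it simultaneously closes $D_m$; we denote the closed extensions by $(\mathbb{D}_m^{1,2},\mathcal{E}_m)$ and $D_m$. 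Theorem \ref{th:DmF} then propagates to $F\in\mathbb{D}_m^{1,2}$ by choosing $(F_n)\subset\mathcal{S}$ converging to $F$ in the graph norm and passing to the limit, using once more the $L^2$-integrability of $\left.\partial_\varepsilon G^\varepsilon\right|_{\varepsilon=0}$.

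For the local Dirichlet form property I would invoke the chain rule of Proposition \ref{functCalculus}: if $\Phi\in C^\infty(\mathbb{R})$ satisfies $\Phi(0)=0$ and $|\Phi'|\leq 1$, then for $F\in\mathcal{S}$ one has $\Phi(F)\in\mathcal{S}$ and $|D_m \Phi(F)|=|\Phi'(F)||D_m F|\leq |D_m F|$, whence $\mathcal{E}_m(\Phi(F))\leq \mathcal{E}_m(F)$ and $\|\Phi(F)\|_{L^2}\leq \|F\|_{L^2}$; a standard mollification and closure argument extends this contraction property to every normal contraction acting on $\mathbb{D}_m^{1,2}$, giving the Markovian property. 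Locality follows from the same chain rule passed to the closure: the square field operator $(F,G)\mapsto D_m F \cdot D_m G$ satisfies the diffusion identity $D_m\Phi(F)\cdot D_m G=\Phi'(F)D_m F\cdot D_m G$, which in the Bouleau--Denis framework characterises locality. The main obstacle in the whole argument is the moment estimate $\left.\partial_\varepsilon G^\varepsilon\right|_{\varepsilon=0}\in L^p$: it rests on controlling the Hawkes counting process through the subcriticality condition and on a careful bound of each of the terms $\psi(m,\cdot)$, $\Gamma_1$, $\Gamma_2$ appearing in Proposition \ref{prop:derivG}.
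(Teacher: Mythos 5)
Your argument follows essentially the same route as the paper: derive an integration-by-parts duality from Theorem \ref{th:DmF} combined with the Leibniz rule of Proposition \ref{functCalculus}, use it together with the density of $\mathcal{S}$ (Remark \ref{remark:density}) to show that any $L^2$-limit of $D_m F_n$ along a sequence $F_n\to 0$ in $L^2$ must vanish, close $\mathcal{E}_m$ and $D_m$ simultaneously, and obtain the Markovian and local properties by passing the chain rule through the closure. The paper does the same, citing \cite[Proposition 1.3.2]{bouleau:hirsch} and \cite[Theorem 3.1.1]{FukushimaOshimaTakeda+1994} for the abstract closure statements.

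The one genuine flaw in your write-up is the moment estimate. You claim that the $dN_s$-integrand in Proposition \ref{prop:derivG} is pathwise bounded by $C(1+N_{s-})$, hence that $\dfrac{\partial G^\varepsilon}{\partial \varepsilon}|_{\varepsilon=0}\in L^p(\Omega)$ for every $p$. This fails for the term $\int_{(0,T]} m(s)\,dN_s = \sum_{i\le N_T} m(T_i)$: the function $m$ lies only in $L^2([0,T])$, not $L^\infty$, so $|m(T_i)|$ has no deterministic bound, and the observation that $m\in L^2\subset L^1$ does not help here. The global $L^p$ claim is also more than the closability argument needs and is unlikely to hold for arbitrary $m\in\mathcal{H}$. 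What is actually required is that $Y\,\dfrac{\partial G^\varepsilon}{\partial \varepsilon}|_{\varepsilon=0}\in L^2(\Omega)$ for each fixed $Y\in\mathcal{S}$; since such a $Y$ is bounded and supported on $\{N_T\le d\}$ for some finite $d$, one only needs to control $\dfrac{\partial G^\varepsilon}{\partial \varepsilon}|_{\varepsilon=0}\mathbf{1}_{\{N_T=n\}}$ for the finitely many $n\le d$, and on each such event the $L^2$ bound follows from $m\in L^2([0,T])$ together with the bounded conditional density $k_n$ of \eqref{eq:bound_dens_jump_times}, exactly as in the proof of Theorem \ref{th:DmF}. Replacing your global $L^p$ claim by this localized $L^2$ estimate repairs the step; as written it is a gap.
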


\begin{proof}
    Let $(X_n)_{n\in \mathbb{N}}$ be a sequence in $\mathcal{S}$ converging to $0$ in $L^2(\Omega)$ and such that 
    $$\lim_{n,k\rightarrow +\infty} \mathcal{E}_m (X_n -X_k ) = \lim_{n,k \to +\infty} \mathbb{E}[(D_m X_n - D_m X_k)^2] =0.$$
    Thus $(D_m X_n )_{n\in \mathbb{N}}$ is a Cauchy sequence in $L^2(\Omega)$, so it converges to an element $Z$ in $L^2(\Omega)$. Then, let $Y$ be in $\mathcal{S}$, we have by integration by part formula:
    \begin{eqnarray*}
        \E [D_m X_nY]&=&\E [D_m (X_n Y)]-\E [X_n D_m Y]\\
        &=&\E \left[X_n Y \dfrac{\partial G^\varepsilon}{\partial \varepsilon}|_{\varepsilon = 0}\right]-\E [X_n D_m Y].
    \end{eqnarray*}
    The last equality comes from Theorem \ref{th:DmF}. 
    Since $Y\dfrac{\partial G^\varepsilon}{\partial \varepsilon}|_{\varepsilon = 0}$ and $D_m Y$ belong to $L^2(\Omega)$, we get, by dominated convergence theorem,
    $$\forall Y\in \mathcal{S},\quad \mathbb{E}[ZY] = \lim_{n\rightarrow +\infty}\E [D_m X_nY] = 0.$$
    Hence $Z=0$ by density. We deduce thanks to \cite[Proposition 1.3.2]{bouleau:hirsch} that $(\mathcal{S}, \mathcal{E}_m)$ is closable. \\
    As a consequence, for any
    $X\in \mathbb{D}_m^{1,2}$, there exists a sequence $(X_n )_{n\in \mathbb{N}} \in \mathcal{S}$ converging to $X$ in $\mathbb{D}_m^{1,2}$ since for all $n,k\in\N$
    $$\mathcal{E}_m (X_n-X_k)=\E [|D_m X_n - D_m X_k|^2 ]$$
    we deduce that $(D_m X_n)_{n\in \mathbb{N}}$ is a Cauchy sequence in $L^2$ hence converges to  an element that we still denote $D_m X$ and defines in a unique way the extension of $D_m$ to $\mathbb{D}_m^{1,2}$.\\
    As a consequence of Theorem 3.1.1 in \cite{FukushimaOshimaTakeda+1994}  $(\mathbb{D}_m^{1,2},\mathcal{E}_m) $ is a    Dirichlet form. Finally, 
    by  an approximation procedure, it is clear that point 2. of Proposition \ref{functCalculus} holds    {with $\Phi$, $C^1$ with }bounded derivatives and $F_1 , \cdots ,F_d$ in $\mathbb{D}_m^{1,2}$ which implies the local property. \\
      {Similarly, equality in Theorem \ref{th:DmF} is obtained by density.}
\end{proof}


\section{The local Dirichlet form}

We would like to define an operator $D$ with domain $\mathbb{D}^{1,2} \subset L^2(\Omega)$ and taking values in $L^2(\Omega;\mathcal{H})$ such that 
$$\forall F \in \mathbb{D}^{1,2}, m\in \mathcal{H}, \quad D_m F = \langle DF, m\rangle_\mathcal{H} = \int_0^T D_t F m(t) dt.$$

\subsection{Definition using a Hilbert basis}

Let $(m_i)_{i\in \mathbb{N}}$ be a Hilbert basis of the space $\mathcal{H}$. Then every function $m\in \mathcal{H}$ can be expressed as
$$m = \sum_{i=0}^{+\infty} \langle m, m_i\rangle_\mathcal{H} m_i.$$
We now set
$$\mathbb{D}^{1,2} = \left\{ X \in \bigcap_{i=0}^{+\infty} \mathbb{D}^{1,2}_{m_i}, \quad \sum_{i=0}^{+\infty} \lVert D_{m_i} X\rVert_{L^2(\Omega)}^2 < + \infty\right\}$$
and
$$\forall X,Y \in \mathbb{D}^{1,2} \quad \mathcal{E}(X,Y) = \sum_{i=0}^{+\infty} \mathbb{E}[D_{m_i} X D_{m_i} Y].$$
We also note $\mathcal{E}(X) = \mathcal{E}(X,X)$.
\begin{Proposition} \label{propo:local:dirichlet}
    The bilinear form $(\mathbb{D}^{1,2}, \mathcal{E})$ is a local Dirichlet form admitting a   gradient $D$ given by
    \begin{eqnarray}{\label{eqD}}
    \forall X,Y \in \mathbb{D}^{1,2},\ DX = \sum_{i=0}^{+\infty} D_{m_i} X m_i \in L^2(\Omega, \mathcal{H}),
    \end{eqnarray}
   with a carré du champ operator $\Gamma$ given by 
    $$\forall X,Y \in \mathbb{D}^{1,2};\ \Gamma[X,Y] = \langle DX, DY\rangle_\mathcal{H}.$$
    As a consequence $\mathbb{D}^{1,2}$ is a Hilbert space equipped with the norm
    $$\lVert X \rVert^2_{\mathbb{D}^{1,2}} = \lVert X \rVert^2_{L^2(\Omega)} + \mathcal{E}(X).$$
    Moreover, as $\mathcal{S}$ is dense in each $\mathbb{D}_{m_i}^{1,2}, i\in \mathbb{N}$, $\mathcal{S}$ is dense in $\D^{1,2}$.
\end{Proposition}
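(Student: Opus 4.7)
The plan is to transfer each required property from the directional form $(\mathbb{D}^{1,2}_{m_i}, \mathcal{E}_{m_i})$ of Proposition \ref{propo:closable} to the full form $(\mathbb{D}^{1,2}, \mathcal{E})$ via the orthonormal decomposition along the Hilbert basis $(m_i)_i$. First I would check that $D$ defined by \eqref{eqD} makes sense as an element of $L^2(\Omega;\mathcal{H})$: Parseval yields $\mathbb{E}\|DX\|_\mathcal{H}^2 = \sum_i \mathbb{E}|D_{m_i}X|^2 = \mathcal{E}(X) < \infty$ for any $X\in \mathbb{D}^{1,2}$, so the series converges unconditionally in $L^2(\Omega;\mathcal{H})$. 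Cauchy--Schwarz then gives the absolute convergence of the series defining $\mathcal{E}(X,Y)$ together with the identities $\mathcal{E}(X,Y) = \mathbb{E}[\langle DX,DY\rangle_\mathcal{H}] = \mathbb{E}[\Gamma[X,Y]]$, which is precisely the carré du champ property.

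For closedness, I would take a sequence $(X_n)_n$ in $\mathbb{D}^{1,2}$ with $X_n\to X$ in $L^2(\Omega)$ and $(DX_n)_n$ Cauchy in $L^2(\Omega;\mathcal{H})$, and call $Z$ its limit; projecting on each $m_i$ gives $D_{m_i}X_n \to \langle Z,m_i\rangle_\mathcal{H}$ in $L^2(\Omega)$, so the closedness of each $(\mathbb{D}^{1,2}_{m_i},\mathcal{E}_{m_i})$ from Proposition \ref{propo:closable} forces $X\in\mathbb{D}^{1,2}_{m_i}$ with $D_{m_i}X = \langle Z,m_i\rangle_\mathcal{H}$; Parseval then yields $X\in\mathbb{D}^{1,2}$ and $DX = Z$, and the Hilbert space structure follows. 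The Markovian property comes from the chain rule in each direction: for a smooth normal contraction $\phi$ with $\phi(0)=0$, the extension of Proposition \ref{functCalculus}(2) to $\mathbb{D}^{1,2}_{m_i}$ gives $|D_{m_i}\phi(X)| = |\phi'(X)||D_{m_i}X| \leq |D_{m_i}X|$ for every $i$, so summation produces $\mathcal{E}(\phi(X))\leq\mathcal{E}(X)$; mollification extends this to an arbitrary normal contraction. Locality is transferred in the same way: the local property of each $(\mathbb{D}^{1,2}_{m_i}, \mathcal{E}_{m_i})$ makes every $D_{m_i}X$ vanish where $X$ is constant, hence also $DX$ and $\Gamma[X]$.

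The main difficulty is the density of $\mathcal{S}$ in $\mathbb{D}^{1,2}$, because the full norm dominates every directional norm simultaneously, so the scalar density in each $\mathbb{D}^{1,2}_{m_i}$ does not automatically upgrade. My plan is a finite truncation in the basis: set $\mathcal{E}^{(N)}(X) = \sum_{i=0}^N \mathbb{E}|D_{m_i}X|^2$ so that $\mathcal{E}^{(N)}\uparrow\mathcal{E}$, and approximate first in the norm $\|\cdot\|_{L^2}^2 + \mathcal{E}^{(N)}(\cdot)$ for each fixed $N$. Here I would use that the indicators $\mathbf{1}_{\{N_T=n\}}$ are $\mathcal{T}_\varepsilon$-invariant and have zero directional derivative, to decompose $X = \sum_{n\geq 0} f_n(T_1,\ldots,T_n)\mathbf{1}_{\{N_T=n\}}$, truncate to $n\leq d$, and mollify each $f_n$ in its jump-time arguments by a standard $C^\infty$ convolution to produce an element of $\mathcal{S}$. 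The uniform control
\begin{equation*}
\sum_{i=0}^{N}\widehat{m}_i(t)^2 \leq \|\mathbf{1}_{[0,t]} - t/T\|^2_\mathcal{H} = t(1-t/T) \leq T/4,
\end{equation*}
obtained from Bessel's inequality applied to the projection of $\mathbf{1}_{[0,t]}$ onto $\mathcal{H}$, keeps the directional energies bounded uniformly in $N$ throughout the mollification, and a diagonal extraction in the truncation and mollification parameters then yields a sequence in $\mathcal{S}$ converging to $X$ in $\mathbb{D}^{1,2}$. The most delicate point is this last step: proving that a single approximating family can be made to converge simultaneously in every $\mathcal{E}_{m_i}$-norm, which is why the uniform bound above is essential.
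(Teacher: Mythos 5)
Your plan is more hands-on than the paper's: the paper disposes of the Dirichlet/carré du champ/gradient structure by citing \cite[Proposition 4.2.1, Chapter 2]{bouleau:hirsch} and then only proves locality explicitly, whereas you reconstruct the closedness, the Markov property, and the carré du champ from the directional forms via the basis expansion. That part of your argument — Parseval for $L^2(\Omega;\mathcal{H})$-convergence of \eqref{eqD}, projection onto $m_i$ for closedness, chain rule plus mollification for Markovianity, and vanishing of each $D_{m_i}X$ on level sets for locality — is essentially the standard proof and is fine; your locality argument is the same one the paper writes out.

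On the density of $\mathcal{S}$, however, there is a real gap. You correctly identify this as the delicate point (the paper simply asserts it), and the observation $\sum_{i\le N}\widehat m_i(t)^2\le\lVert\mathbf 1_{[0,t]}-t/T\rVert^2_{\mathcal H}=t(1-t/T)$ is exactly the control needed to show $\mathcal S\subset\mathbb D^{1,2}$ with a bound independent of the truncation $N$. But the mollification step is not justified as written: given $X\in\mathbb D^{1,2}$ with decomposition $X=\sum_n f_n(T_1,\dots,T_n)\mathbf 1_{\{N_T=n\}}$, the functions $f_n$ are a priori only $\mathcal B(\R^n)$-measurable, and you need $f_n$ to lie in a Sobolev-type class on the simplex (the space $d_n$ of Proposition \ref{propo:critere:local}, which appears only in the next section) before a convolution $f_n^\varepsilon$ can be shown to converge to $f_n$ in the energy that appears in $\mathcal E$. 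Density of $\mathcal S$ in each $\mathbb D^{1,2}_{m_i}$ gives you, for each $i$, a different approximating sequence whose convergence you control in one direction at a time; the truncation $\mathcal E^{(N)}$ plus a diagonal extraction does not, by itself, produce a single sequence converging in all directions simultaneously, because there is no uniform-in-$k$ control of the tail $\sum_{i>N}\mathbb E\lvert D_{m_i}(X_k-X)\rvert^2$. What actually closes the argument — and what the paper uses in Proposition \ref{propo:decomposition}, albeit in a way that presupposes the density statement you are trying to prove — is the factorization of the $\mathbb D^{1,2}$-norm over the events $\{N_T=n\}$, namely $\lVert X\rVert^2_{\mathbb D^{1,2}}=a^2\P(N_T=0)+\sum_n\lVert f_n\rVert^2_{d_n}\P(N_T=n)$, together with the fact that $d_n$ is by construction the $e_n$-closure of $C^\infty(\R^n)$. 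If you first establish this identity for $X\in\mathcal S$ (which your Parseval computation does), then prove $f_n\in d_n$ for $X\in\mathbb D^{1,2}$ by passing to the limit along a single-direction approximating sequence on $\{N_T=n\}$, the truncation in $n$ and the mollification of each $f_n$ give density as you intend; without that intermediate step the "diagonal extraction" is not sufficient.
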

\begin{proof} 
The fact that $(\mathbb{D}^{1,2}, \mathcal{E})$ is a Dirichlet form with gradient $D$ given by \eqref{eqD} is standard, see for example \cite[Proposition 4.2.1, Chapter 2]{bouleau:hirsch}. \\
Let $\Phi: \R\longrightarrow\R$ be $C^1$ with bounded derivative and $F\in \mathbb{D}^{1,2}$. For for each $i$, the Dirichlet form  $(\mathbb{D}^{1,2}_{m_i},\mathcal{E} )$ is local, hence $D_{m_i}\Phi (F)= \Phi^\prime (F)\cdot D_{m_i}F$. From this we immediately get that $D\Phi (F)=\Phi^\prime (F)\cdot DF$ which proves that $(\mathbb{D}^{1,2}, \mathcal{E})$ is a local Dirichlet form.
\end{proof}

\begin{Corollary} \label{coro:deriveTj}
   For all $j\in\mathbb{N}^*$, evoke that $\overline{T}_j = T_j \wedge T$, we have
    $$ D \overline{T}_j = \dfrac{\overline{T}_j}{T} - \mathbf{1}_{[0, \overline{T}_j]}.$$
    As consequence for all
    $$F = a 1_{\{N_T = 0\}} + \sum_{n=1}^d f_n(T_1, \cdots, T_n) \mathbf{1}_{\{N_T = n\}} \in \mathcal{S},$$
    we have $F \in \mathbb{D}^{1,2}$ and
    $$D F = \sum_{n=1}^d \sum_{j=1}^n \dfrac{\partial f_n}{\partial t_j} (T_1, \cdots, T_n) \left( \dfrac{T_j}{T} - \mathbf{1}_{[0,T_j]} \right) \mathbf{1}_{\{N_T = n\}}.$$
    In particular this expression does not depend on the basis $(m_i)_{i\in \mathbb{N}}$.
\end{Corollary}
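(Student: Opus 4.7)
The plan is to use the explicit directional derivative computed in Lemma \ref{lemma:DTj} and then identify the gradient $D\overline{T}_j \in \mathcal{H}$ as the unique element of $\mathcal{H}$ whose scalar product against each $m_i$ recovers $D_{m_i}\overline{T}_j$.

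First I would recall that Lemma \ref{lemma:DTj} gives $D_{m_i}\overline{T}_j = -\widehat{m_i}(\overline{T}_j) = -\int_0^{\overline{T}_j} m_i(s)\,ds = -\langle \mathbf{1}_{[0,\overline{T}_j]}, m_i\rangle_{L^2([0,T])}$ for every $i\in\mathbb{N}$. Since $m_i\in\mathcal{H}$ is orthogonal to constants, I can subtract any constant from $\mathbf{1}_{[0,\overline{T}_j]}$ without changing this inner product. I therefore set
$$Z_j := \frac{\overline{T}_j}{T} - \mathbf{1}_{[0,\overline{T}_j]}.$$
A direct computation shows $\int_0^T Z_j(s)\,ds = \overline{T}_j - \overline{T}_j = 0$, so $Z_j \in \mathcal{H}$ pathwise, and by construction $\langle Z_j, m_i\rangle_{\mathcal{H}} = -\widehat{m_i}(\overline{T}_j) = D_{m_i}\overline{T}_j$ for all $i$.

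Next I need to verify that $\overline{T}_j\in \mathbb{D}^{1,2}$, i.e. that $\sum_{i} \|D_{m_i}\overline{T}_j\|_{L^2(\Omega)}^2 < \infty$. By Parseval's identity in $\mathcal{H}$, applied pathwise,
$$\sum_{i=0}^{+\infty}(D_{m_i}\overline{T}_j)^2 = \sum_{i=0}^{+\infty}\langle Z_j, m_i\rangle_{\mathcal{H}}^2 = \|Z_j\|_{\mathcal{H}}^2 = \overline{T}_j\Bigl(1-\frac{\overline{T}_j}{T}\Bigr) \leq T,$$
so taking expectation we get a bound of $T$. Hence $\overline{T}_j\in\mathbb{D}^{1,2}$, and from the defining formula \eqref{eqD} of the gradient $D$ in Proposition \ref{propo:local:dirichlet} together with $\langle D\overline{T}_j, m_i\rangle_{\mathcal{H}} = D_{m_i}\overline{T}_j = \langle Z_j, m_i\rangle_{\mathcal{H}}$ for every basis vector, we conclude $D\overline{T}_j = Z_j$.

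For the second assertion, let $F\in\mathcal{S}$ be of the form \eqref{eq:smooth}. On $\{N_T = n\}$ we have $T_j\leq T$ for $j\in\{1,\ldots,n\}$, so $T_j = \overline{T}_j$; hence $F$ can be rewritten as a smooth function of $(\overline{T}_1,\ldots,\overline{T}_d)$ on each event $\{N_T = n\}$. Applying the chain rule for $D$, which follows from the locality of the Dirichlet form $(\mathbb{D}^{1,2},\mathcal{E})$ (Proposition \ref{propo:local:dirichlet}) and point 2 of Proposition \ref{functCalculus} passed to the gradient, yields
$$DF = \sum_{n=1}^d \sum_{j=1}^n \frac{\partial f_n}{\partial t_j}(T_1,\ldots,T_n)\, D\overline{T}_j \, \mathbf{1}_{\{N_T = n\}},$$
and substituting the formula for $D\overline{T}_j$ gives the announced expression. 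Since $D\overline{T}_j = \overline{T}_j/T - \mathbf{1}_{[0,\overline{T}_j]}$ is intrinsic, this expression is manifestly independent of the choice of Hilbert basis $(m_i)_{i\in\mathbb{N}}$. The only subtle point is checking $Z_j\in\mathcal{H}$ pathwise and the Parseval bound, both of which are short and routine.
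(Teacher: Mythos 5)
Your proof is correct and follows essentially the same route as the paper: expand $D\overline{T}_j = \sum_i D_{m_i}\overline{T}_j\, m_i$ using Lemma \ref{lemma:DTj}, recognize the sum as the Fourier expansion of $\overline{T}_j/T - \mathbf{1}_{[0,\overline{T}_j]}\in\mathcal{H}$, and then pass to general $F\in\mathcal{S}$ via the (directional) chain rule. The one addition you make—explicitly computing $\|Z_j\|_{\mathcal{H}}^2 = \overline{T}_j(1-\overline{T}_j/T)$ and invoking Parseval to verify the summability condition defining $\mathbb{D}^{1,2}$—is a useful clarification of a step the paper leaves implicit.
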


\begin{proof}
    We have
    \begin{eqnarray*}
        D \overline{T}_j & = & \sum_{i=0}^{\infty} D_{m_i} \overline{T}_j m_i
   =- \sum_{i=0}^{\infty} \widehat{m}_i(\overline{T}_j) m_i
   = - \sum_{i=0}^{\infty} \left( \int_0^T m_i(s) \mathbf{1}_{\{0\leq s\leq \overline{T}_j\}} ds \right) m_i
        \\ & = & \sum_{i=0}^{\infty} \left\langle \dfrac{\overline{T}_j}{T} -\mathbf{1}_{[0,\overline{T}_j]}, m_i \right\rangle_\mathcal{H} m_i
 = \dfrac{\overline{T}_j}{T} - \mathbf{1}_{[0, \overline{T}_j]}.
    \end{eqnarray*}
    Notice that the term $\dfrac{\overline{T}_j}{T}$ is mandatory to belong to $\mathcal H$ defined by \eqref{eq:def_mathcal_H}. 
\end{proof}
\begin{Corollary}
    For any $F\in \mathbb{D}^{1,2}$ and any $m\in \mathcal{H}$, $D_m F =\langle DF,m\rangle_{\mathcal{H}}$.
\end{Corollary}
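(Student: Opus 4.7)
The strategy is a two-step argument: first verify the identity on the dense subspace $\mathcal{S}$ by direct computation, then pass to the limit via density and the closedness of the directional Dirichlet forms.

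\textbf{Step 1: Identity on $\mathcal{S}$.} Fix $m \in \mathcal{H}$ and take $F = a\mathbf{1}_{\{N_T=0\}} + \sum_{n=1}^d f_n(T_1,\ldots,T_n)\mathbf{1}_{\{N_T=n\}} \in \mathcal{S}$. By Corollary \ref{coro:deriveTj}, $DF$ has an explicit expression involving $\frac{T_j}{T} - \mathbf{1}_{[0,T_j]}$. Taking the $\mathcal{H}$-inner product with $m$, the key observation is
$$\left\langle \tfrac{T_j}{T} - \mathbf{1}_{[0,T_j]},\, m\right\rangle_{\mathcal{H}} = \tfrac{T_j}{T}\int_0^T m(s)\,ds - \int_0^{T_j} m(s)\,ds = -\widehat{m}(T_j),$$
since the first integral vanishes by definition of $\mathcal{H}$. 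Substituting and comparing term-by-term with the explicit formula for $D_m F$ given in Proposition \ref{prop:lemma}, the equality $\langle DF, m\rangle_{\mathcal{H}} = D_m F$ follows on $\mathcal{S}$.

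\textbf{Step 2: Extension to $\mathbb{D}^{1,2}$.} Take $F \in \mathbb{D}^{1,2}$ and, by density (last assertion of Proposition \ref{propo:local:dirichlet}), pick $(F_n) \subset \mathcal{S}$ with $F_n \to F$ in $\mathbb{D}^{1,2}$, that is $\|F_n - F\|_{L^2} \to 0$ and $\mathcal{E}(F_n - F) \to 0$. For the right-hand side, continuity of the inner product in $\mathcal{H}$ gives
$$\|\langle DF_n, m\rangle_{\mathcal{H}} - \langle DF, m\rangle_{\mathcal{H}}\|_{L^2(\Omega)} \leq \|m\|_{\mathcal{H}} \sqrt{\mathcal{E}(F_n - F)} \longrightarrow 0.$$
For the left-hand side, one writes for smooth elements $D_m F_n = \sum_{i} \langle m, m_i\rangle_{\mathcal{H}} D_{m_i} F_n$ (by linearity on $\mathcal{S}$) and applies Cauchy-Schwarz in $\ell^2$:
$$\|D_m F_n - D_m F_k\|_{L^2(\Omega)}^2 \leq \|m\|_{\mathcal{H}}^2 \sum_{i=0}^\infty \|D_{m_i}(F_n - F_k)\|_{L^2(\Omega)}^2 = \|m\|_{\mathcal{H}}^2\,\mathcal{E}(F_n - F_k),$$
so $(D_m F_n)$ is Cauchy in $L^2(\Omega)$. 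Since $(\mathbb{D}^{1,2}_m, \mathcal{E}_m)$ is closed by Proposition \ref{propo:closable}, this forces $F \in \mathbb{D}^{1,2}_m$ and $D_m F_n \to D_m F$ in $L^2(\Omega)$. Passing to the limit in the identity $D_m F_n = \langle DF_n, m\rangle_{\mathcal{H}}$ proved in Step 1 gives the conclusion.

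\textbf{Expected obstacle.} Step 1 is a direct computation; the only subtle point is the inclusion $\mathbb{D}^{1,2} \subset \mathbb{D}^{1,2}_m$ for \emph{every} $m \in \mathcal{H}$, which is not part of the definition of $\mathbb{D}^{1,2}$ (that definition only requires membership in $\mathbb{D}^{1,2}_{m_i}$ along the chosen basis). The Cauchy-Schwarz bound above is precisely what resolves this: it simultaneously proves the inclusion and provides the $L^2$-convergence needed to take the limit, making the basis-dependent definition of $\mathbb{D}^{1,2}$ intrinsically basis-independent.
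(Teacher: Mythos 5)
Your proof is correct and follows essentially the same route as the paper, which simply states that the identity is ``easy to verify'' on $\mathcal{S}$ and then ``obtained by a density argument.'' Your version is considerably more careful: you correctly identify and resolve (via the Cauchy--Schwarz bound $\|D_m(F_n-F_k)\|_{L^2}^2 \leq \|m\|_{\mathcal{H}}^2\,\mathcal{E}(F_n-F_k)$ on $\mathcal{S}$ together with closedness of $(\mathbb{D}^{1,2}_m,\mathcal{E}_m)$) the non-obvious inclusion $\mathbb{D}^{1,2}\subset\mathbb{D}^{1,2}_m$ for arbitrary $m\in\mathcal{H}$, a point the paper leaves implicit in its one-line ``density argument.''
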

\begin{proof}
    Let $j\in\mathbb{N}^*$. The equality is easy to verify for $F=\overline{T_j}$ and then for $F\in \mathcal{S}$. The general case is obtained by a density argument.
\end{proof}
As a consequence of the local property, following \cite[Section 2.2.2]{bouleau:denis:energy:image}    we have similarly to Proposition \ref{functCalculus}:

\begin{Corollary} \label{rem:D:by:parts}
    Let $n\in\N^*$, for any $F=(F_1, \dots, F_n) \in \left(\mathbb{D}^{1,2}\right)^n$ and two functions $\Phi$ and $ \Psi$ defined on $\mathbb{R}^n$, of class $C^1$ with bounded derivatives, 
    $\Phi(F)$ belongs to $\mathbb{D}^{1,2}$
    and
    $$D \Phi(F) = \sum_{j=1}^n \dfrac{\partial \Phi}{\partial x_j}(F) D F_j,$$
    $$\Gamma [\Phi(F),\Psi (F)]=\sum_{1\leq i,j\leq n} \dfrac{\partial \Phi}{\partial x_i} (F)\dfrac{\partial \Psi}{\partial x_j} (F)\Gamma [F_i ,F_j ].$$
\end{Corollary}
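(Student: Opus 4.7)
The plan is to reduce to the smooth case via approximation and then pass to the limit using the closedness of the Dirichlet form. First, by the density statement in Proposition \ref{propo:local:dirichlet}, for each $i\in\{1,\dots,n\}$ I pick a sequence $(F_i^{(k)})_{k\in\mathbb{N}}\subset \mathcal{S}$ converging to $F_i$ in $\mathbb{D}^{1,2}$, i.e. in $L^2(\Omega)$ and with $DF_i^{(k)}\to DF_i$ in $L^2(\Omega,\mathcal{H})$. Set $F^{(k)} = (F_1^{(k)},\dots,F_n^{(k)})$. By Proposition \ref{functCalculus} applied componentwise (and the directional chain rule extended via Proposition \ref{propo:closable} to the global gradient as in Corollary \ref{coro:deriveTj}), one has $\Phi(F^{(k)})\in\mathcal{S}\subset\mathbb{D}^{1,2}$ and
$$D\Phi(F^{(k)}) = \sum_{j=1}^n \dfrac{\partial \Phi}{\partial x_j}(F^{(k)})\, DF_j^{(k)}.$$

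The second step is to show convergence of both sides. Since $\Phi$ has bounded derivatives, it is Lipschitz, hence $\Phi(F^{(k)})\to\Phi(F)$ in $L^2(\Omega)$. For the gradient, I decompose
$$\dfrac{\partial \Phi}{\partial x_j}(F^{(k)}) DF_j^{(k)} - \dfrac{\partial \Phi}{\partial x_j}(F) DF_j = \dfrac{\partial \Phi}{\partial x_j}(F^{(k)})\bigl(DF_j^{(k)}-DF_j\bigr) + \Bigl(\dfrac{\partial \Phi}{\partial x_j}(F^{(k)})-\dfrac{\partial \Phi}{\partial x_j}(F)\Bigr) DF_j.$$
The first piece is bounded in $L^2(\Omega,\mathcal{H})$-norm by $\|\partial_j\Phi\|_\infty\,\|DF_j^{(k)}-DF_j\|_{L^2(\Omega,\mathcal{H})}\to 0$. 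For the second piece, extracting a subsequence so that $F^{(k)}\to F$ almost surely and using continuity of $\partial_j\Phi$, I get almost-sure convergence of the prefactor, while the pointwise bound $\|(\partial_j\Phi(F^{(k)})-\partial_j\Phi(F)) DF_j\|_\mathcal{H}^2 \leq 4\|\partial_j\Phi\|_\infty^2 \|DF_j\|_\mathcal{H}^2 \in L^1(\Omega)$ allows dominated convergence.

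Combining, the sequence $(\Phi(F^{(k)}))_k$ converges in $L^2(\Omega)$ and $(D\Phi(F^{(k)}))_k$ is Cauchy in $L^2(\Omega,\mathcal{H})$, so the closedness of $(\mathbb{D}^{1,2},\mathcal{E})$ (Proposition \ref{propo:local:dirichlet}) yields $\Phi(F)\in\mathbb{D}^{1,2}$ together with the announced formula
$$D\Phi(F) = \sum_{j=1}^n \dfrac{\partial \Phi}{\partial x_j}(F)\, DF_j.$$
The formula for $\Gamma[\Phi(F),\Psi(F)]$ follows immediately by applying the chain rule to both $\Phi(F)$ and $\Psi(F)$, then using $\Gamma[X,Y]=\langle DX,DY\rangle_\mathcal{H}$ and bilinearity of the inner product in $\mathcal{H}$ to expand the double sum.

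The main obstacle is not conceptual but rather the joint convergence of products in $L^2(\Omega,\mathcal{H})$: it is essential that the derivatives of $\Phi$ are bounded (so that the Lipschitz bound on $\Phi$ gives $L^2$-convergence without further integrability assumptions on $F^{(k)}$) and that one passes to an almost-surely convergent subsequence to exploit continuity of $\partial_j\Phi$; the final identity then holds for the whole sequence by uniqueness of the limit in $\mathbb{D}^{1,2}$.
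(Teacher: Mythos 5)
Your argument is the standard density-and-closedness proof, which is exactly what the paper implicitly delegates to when it writes ``As a consequence of the local property, following [Bouleau--Denis, Section 2.2.2]\ldots'' and gives no details; so you are not taking a genuinely different route, you are simply unpacking the reference, and the structure (approximate each $F_i$ by $\mathcal{S}$ elements, apply the smooth chain rule, pass to the limit using boundedness of $\nabla\Phi$, closedness of $(\mathbb{D}^{1,2},\mathcal{E})$, then deduce the $\Gamma$-formula from $\Gamma[X,Y]=\langle DX,DY\rangle_{\mathcal H}$) is correct.

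One small inaccuracy to flag: you write $\Phi(F^{(k)})\in\mathcal{S}$, but $\mathcal{S}$ consists of functionals built from $C^\infty$ functions, and Proposition \ref{functCalculus} is stated for $\Phi$ smooth, whereas here $\Phi$ is only $C^1$ with bounded derivatives. So $\Phi(F^{(k)})$ need not lie in $\mathcal{S}$. This does not break your argument: it suffices to know $\Phi(F^{(k)})\in\mathbb{D}^{1,2}$ with $D\Phi(F^{(k)})=\sum_j\partial_j\Phi(F^{(k)})DF^{(k)}_j$, and this is already available either from the $C^1$-extension of the directional chain rule established at the end of the proof of Proposition \ref{propo:closable} (applied to each $D_{m_i}$ and summed, as in Corollary \ref{coro:deriveTj}), or by also approximating $\Phi$ by smooth functions with uniformly bounded derivatives. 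With that correction, the convergence estimates you give (Lipschitz bound for $\Phi(F^{(k)})\to\Phi(F)$ in $L^2$; split of $D\Phi(F^{(k)})-\sum_j\partial_j\Phi(F)DF_j$ into a term controlled by $\|\nabla\Phi\|_\infty\|DF_j^{(k)}-DF_j\|$ and a term handled by a.s.\ convergence along a subsequence plus dominated convergence with dominating function $4\|\partial_j\Phi\|_\infty^2\|DF_j\|_{\mathcal H}^2\in L^1$) are sound, and the closedness of the form gives both membership of $\Phi(F)$ in $\mathbb{D}^{1,2}$ and the gradient formula; the $\Gamma$-identity then follows by bilinearity. The proof is correct modulo this cosmetic fix.
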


\subsection{Divergence operator by duality}

Let $\delta : L^2(\Omega, \mathcal{H}) \longrightarrow L^2(\Omega)$ be the adjoint operator of $D$. Its domain, $\text{Dom}(\delta)$, is the set of $u \in L^2(\Omega, \mathcal{H})$ such that there exists $c\in \mathbb{R}_+^*$ with 
$$\forall F\in \mathbb{D}^{1,2}, \quad \left| \mathbb{E} \left[ \int_0^T D_t F u_t dt \right] \right| \leq c \lVert F \rVert_{\mathbb{D}^{1,2}}.$$
Hence,  for all $u\in \text{Dom}(\delta)$, $\delta (u)$ is the unique element in $L^2(\Omega)$ such that
$$\forall F\in \mathbb{D}^{1,2},  \quad \mathbb{E}[\delta(u) F] = \mathbb{E}[\langle u,DF\rangle_\mathcal{H}] = \mathbb{E} \left[ \int_0^T u_t D_t F dt \right].$$
We now introduce the set $\widetilde{\mathcal{S}}$ of elementary processes $u$ of the form 
$$u =\sum_{i=1}^n A_i m_i, \quad n\in \mathbb{N}^*, \quad A_i \in \mathbb{D}^{1,2}.$$

\begin{Proposition}{\label{FormuleDiv}}
    { For all $u$ in $\widetilde{\mathcal{S}}$, we have $u \in \text{Dom}(\delta)$ and
    $$\delta(u) = \int_{(0,T]} (\psi(u, s) + \widehat{u}(s)  \left(  \Gamma_1(s) + \Gamma_2(s)\right)  +  u(s)) dN_s - \int_0^T D_t (u(t)) dt$$
    where $\psi$, $\Gamma_1$ and $\Gamma_2$ are   defined in Proposition \ref{prop:derivG}.}
\end{Proposition}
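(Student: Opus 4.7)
The plan is to verify the divergence formula by duality: for $u = \sum_{i=1}^n A_i m_i \in \widetilde{\mathcal{S}}$, we must check that the proposed right-hand side, call it $R(u)$, satisfies $\mathbb{E}[R(u) F] = \mathbb{E}[\langle u, DF\rangle_{\mathcal{H}}]$ for every $F$ in a dense subset of $\mathbb{D}^{1,2}$, e.g. $\mathcal{S}$. By linearity of both sides in $u$ it is enough to treat the case $u = A m$ with $A \in \mathbb{D}^{1,2}$ and $m \in \mathcal{H}$.

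For such $u$, I would first identify each piece of $R(u)$. Since $A$ is constant in the time variable and $\psi$, $\widehat{\cdot}$ are linear in their first argument, one has $\psi(u,s) = A\,\psi(m,s)$ and $\widehat{u}(s) = A\,\widehat{m}(s)$, while the deterministic character of $m$ yields $D_t(u(t)) = m(t)\, D_t A$. Integrating gives $\int_0^T D_t(u(t))\,dt = \langle DA, m\rangle_{\mathcal{H}} = D_m A$. Invoking the explicit expression of $\partial_\varepsilon G^\varepsilon|_{\varepsilon=0}$ from Proposition \ref{prop:derivG} we therefore obtain the compact identity
\begin{equation*}
R(u) \;=\; A \, \frac{\partial G^\varepsilon}{\partial \varepsilon}\bigg|_{\varepsilon=0} \;-\; D_m A.
\end{equation*}

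Now I would compute $\mathbb{E}[R(u) F]$ for $F \in \mathcal{S}$. Applying the integration by parts of Theorem \ref{th:DmF} to the product $AF$ (extended to $\mathbb{D}^{1,2}_m$ as noted at the end of Proposition \ref{propo:closable}) gives
\begin{equation*}
\mathbb{E}\!\left[ AF \,\frac{\partial G^\varepsilon}{\partial \varepsilon}\bigg|_{\varepsilon=0} \right] = \mathbb{E}[D_m(AF)] = \mathbb{E}[(D_m A) F + A\, D_m F],
\end{equation*}
where the last equality uses the Leibniz rule which extends from $\mathcal{S}$ to $\mathbb{D}^{1,2}_m$ by locality of the form $\mathcal{E}_m$. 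Subtracting $\mathbb{E}[F\, D_m A]$ cancels the cross term and leaves
\begin{equation*}
\mathbb{E}[R(u) F] = \mathbb{E}[A \,D_m F] = \mathbb{E}\bigl[A\,\langle m, DF\rangle_{\mathcal{H}}\bigr] = \mathbb{E}\bigl[\langle u, DF\rangle_{\mathcal{H}}\bigr],
\end{equation*}
which is exactly the duality that characterises $\delta(u)$. Square integrability of $R(u)$, together with this identity, shows $u \in \mathrm{Dom}(\delta)$ and $\delta(u) = R(u)$, concluding the proof.

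The main technical obstacle I expect is the use of the product rule and integration by parts for the product $AF$: neither $A$ nor $F$ is bounded a priori, so $AF$ need not lie in $\mathbb{D}^{1,2}_m$ without care. I would handle this by a standard truncation, replacing $A$ and $F$ by $\phi_n(A), \phi_n(F)$ for a smooth bounded approximation of the identity, applying the argument above in the truncated setting, and then passing to the limit using the $L^2$-continuity of $D_m$ on $\mathbb{D}^{1,2}_m$ and the integrability of the Hawkes jump integral that appears in $\partial_\varepsilon G^\varepsilon|_{\varepsilon=0}$ (which is controlled via Assumption \ref{ass:lambda_mu} and the density bound \eqref{eq:bound_dens_jump_times}). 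Once this approximation is justified, everything reduces to the one-line computation above.
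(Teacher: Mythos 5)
Your proposal follows essentially the same route as the paper: reduce by linearity to $u = A m$, rewrite the claimed divergence as $R(u) = A\,\partial_\varepsilon G^\varepsilon|_{\varepsilon=0} - D_m A$, and then use Theorem \ref{th:DmF} together with the Leibniz rule to show $\mathbb{E}[R(u)F] = \mathbb{E}[\langle u, DF\rangle_{\mathcal{H}}]$ for $F$ in a dense class. This is exactly the paper's computation, and you correctly identify the key technical issue (whether $AF$ lies in $\mathbb{D}^{1,2}_m$ so that the product rule and the integration by parts apply), which the paper's proof glosses over.

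There is, however, one real gap in the ordering of your argument. You deduce $u \in \mathrm{Dom}(\delta)$ from the duality identity together with ``square integrability of $R(u)$,'' but $R(u) \in L^2(\Omega)$ is asserted without justification, and it is not obvious: the term $A\,\partial_\varepsilon G^\varepsilon|_{\varepsilon=0}$ is a product of an $L^2$ random variable with a stochastic integral against $dN_s$, and such a product has no a priori reason to be square-integrable. The paper avoids this entirely by first proving $u \in \mathrm{Dom}(\delta)$ through a one-line Cauchy--Schwarz estimate,
\[
\left| \mathbb{E}\!\left[\int_0^T D_tF\,u_t\,dt\right] \right| \le \|F\|_{\mathbb{D}^{1,2}}\,\|A\|_{L^2(\Omega)}\,\|m_{i_0}\|_{L^2(0,T)},
\]
which makes no reference to the candidate expression $R(u)$; once $\delta(u) \in L^2$ is known to exist, the duality identity pins it down and the $L^2$-membership of $R(u)$ comes out for free. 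You should use this direct bound as step one rather than trying to bootstrap domain membership from integrability of $R(u)$; with that rearrangement your proof closes. Your proposed truncation to justify the product rule on $AF$ is a reasonable way to fill the small gap that the paper itself leaves implicit.
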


\begin{proof}
    Let $u = A m_{i_0}$ with $A \in \mathbb{D}^{1,2}$ and $i_0 \in \mathbb{N}$. For any $F\in \mathbb{D}^{1,2}$ we have 
    \begin{eqnarray*}  
        \left| \mathbb{E} \left[ \int_0^T D_t F u_t dt \right] \right| & \leq & \mathbb{E}\left[ \int_0^T |D_t F| |A m_{i_0}(t)| dt \right]
        \\ & \leq & \left\| F \right\|_{\mathbb{D}^{1,2}} \sqrt{\mathbb{E} \left[ \int_0^T |A|^2 |m_{i_0}(t)|^2dt \right]}
        \\ & = & \left\| F \right\|_{\mathbb{D}^{1,2}} \left\| A \right\|_{L^2(\Omega)} \left\|m_{i_0}\right\|_{L^2(0,T)}
        = c \left\| F \right\|_{\mathbb{D}^{1,2}} 
    \end{eqnarray*}
    with $c = \left\| A \right\|_{L^2(\Omega)} \left\|m_{i_0}\right\|_{L^2(0,T)}$. Thus $u \in \text{Dom}(\delta)$ and for any $F \in \mathbb{D}^{1,2}$
    \begin{eqnarray*}
        \mathbb{E}[\delta(u) F] & = & \mathbb{E} \left[ \int_0^T u_t D_t F dt \right]
   = \mathbb{E}\left[ A \int_0^T m_{i_0}(t) D_t F dt \right]
        \\ & = & \mathbb{E} \left[  A \int_0^T m_{i_0}(t) \sum_{i=0}^{+\infty} D_{m_i} F m_i(t) dt \right]
        \\ & = & \mathbb{E} \left[ A \sum_{i=0}^{+\infty} D_{m_i}F \int_0^T m_{i_0}(t) m_i(t) dt \right]
        \\ & = & \mathbb{E} \left[ A \sum_{i=0}^{+\infty} D_{m_i} F \langle m_{i_0}, m_i \rangle_\mathcal{H} \right]
        = \mathbb{E} [A D_{m_{i_0}} F].
    \end{eqnarray*}
    Thus, integrating by parts: 
    \begin{eqnarray*}
        \mathbb{E}[\delta(u) F] & = & \mathbb{E} [D_{m_{i_0}} (AF)] - \mathbb{E}[F D_{m_{i_0}} A]
        \\ & = & \mathbb{E}\left[ \dfrac{\partial G_{m_{i_0}}^\varepsilon}{\partial \varepsilon}_{|\varepsilon = 0} AF \right] - \mathbb{E}[F D_{m_{i_0}} A]
        \\ & = & \mathbb{E}\left[ \left( \dfrac{\partial G^\varepsilon_{m_{i_0}}}{\partial \varepsilon}_{|\varepsilon = 0} A - D_{m_{i_0}} A \right) F \right].
    \end{eqnarray*}
    Therefore, because $\langle DA, m_{i_0}\rangle_\mathcal{H} = \sum_{i=0}^{+\infty} D_{m_i}A \langle m_i, m_{i_0}\rangle_\mathcal{H} = D_{m_{i_0}} A$,
    \begin{eqnarray*}
        \delta(u) & = & \dfrac{\partial G^\varepsilon_{m_{i_0}}}{\partial \varepsilon}_{|\varepsilon = 0} A - D_{m_{i_0}} A
        \\ & = & { \left(\int_{(0,T]} \left(\psi(m_{i_0}, s)  + \widehat{m_{i_0}}(s) \left(  \Gamma_1(s) + \Gamma_2(s) \right) + m_{i_0}(s)\right)) dN_s\right) A - \int_0^T m_{i_0}(t) D_t A dt}
        \\ & = & {  \int_{(0,T]} (\psi(Am_{i_0}, s) + \widehat{A m_{i_0}}(s) \left(  \Gamma_1(s) + \Gamma_2(s) \right) +  Am_{i_0}(s)) dN_s - \int_0^T D_t (A m_{i_0}(t)) dt}
        \\ & = & {  \int_{(0,T]} (\psi(u, s) + \widehat{u}(s) \left(  \Gamma_1(s) + \Gamma_2(s) \right) +  u(s)) dN_s - \int_0^T D_t (u(t)) dt.}
    \end{eqnarray*}
    We deduce the result for any $u \in \widetilde{\mathcal{S}}$ by linearity.
\end{proof}

\begin{Remark} \label{remark:resume}
    We can retain that:
    \begin{enumerate}
        \item For all $m\in \mathcal{H}$ and $A \in \mathbb{D}^{1,2},$
        $\delta(mA) = \delta(m) A - D_m A.$
        \item For all $m\in \mathcal{H}$ and $A,F \in \mathbb{D}^{1,2}$,
        $\mathbb{E}[A D_m F] = \mathbb{E}[F \delta(mA)].$
        \item { For all $m \in \mathcal{H}$ we have $D_t m_t = 0$
        and
         $$\delta(m) = \int_{(0,T]} (\psi(m, s) + \widehat{m}(s)  \left( \Gamma_1 (s)+\Gamma_2 (s)\right) +  m(s)) dN_s.$$}
    \end{enumerate}
\end{Remark}

\begin{Remark}
    Contrary to the standard Malliavin calculus on the Wiener space (see \cite{nual:06}), we do not a priori have the inclusion of $\mathbb{D}^{1,2} \otimes\mathcal{H}$ in $\text{Dom} (\delta)$ (see Example 3.4 in \cite{Carlen1990} where $\mu = 0$).
\end{Remark}


{  \begin{Corollary} \label{coro:predictable}
    If $u \in L^2(\Omega, \mathcal{H})$ is a predictable process then 
    $$\delta(u) = \int_{(0,T]} (\psi(u, s) + \widehat{u}(s) \left( \Gamma_1 (s)+\Gamma_2 (s)\right) +  u(s)) dN_s$$
\end{Corollary}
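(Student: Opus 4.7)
The plan is to reduce Corollary \ref{coro:predictable} to Proposition \ref{FormuleDiv}: since the general formula has a correction term $-\int_0^T D_t(u(t))\,dt$, it suffices to argue that this correction vanishes whenever $u$ is predictable. I would first prove this on a convenient subclass of predictable processes sitting inside $\widetilde{\mathcal{S}}$, then extend to the general case by density together with the closability of $\delta$.

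For the first step I would introduce $\mathcal{P}_0 \subset \widetilde{\mathcal{S}}$ as the set of processes $u_t = A\, m(t)$ with $A \in \mathbb{D}^{1,2}$ being $\mathcal{F}_a$-measurable for some $a \in (0,T]$ and $m \in \mathcal{H}$ vanishing on $[0,a]$; the condition $m \in \mathcal{H}$ then forces $\int_a^T m(t)\,dt = 0$. Predictability of such a $u$ is immediate: $u_t = 0$ for $t \leq a$ and $u_t \in \mathcal{F}_a \subset \mathcal{F}_{t-}$ for $t > a$. Since $m$ is deterministic, $D_t u(t) = m(t)\, D_t A$, and the key algebraic input is that $t \mapsto D_t A$ is constant on $(a,T]$ whenever $A$ is $\mathcal{F}_a$-measurable. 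Indeed, on $\{N_a = k\}$ one has $A = \phi_k(T_1,\ldots,T_k)$ with $T_k \leq a$, and Corollary \ref{coro:deriveTj} combined with Corollary \ref{rem:D:by:parts} (after approximating the indicators $\mathbf{1}_{\{N_a = k\}}$ by smooth cutoffs) yields
\[
D_t A = \sum_{j=1}^{k}\partial_{t_j}\phi_k(T_1,\ldots,T_k)\left(\frac{T_j}{T} - \mathbf{1}_{\{t\le T_j\}}\right),
\]
in which the indicator vanishes as soon as $t > a \geq T_j$. Hence
\[
\int_0^T D_t u(t)\,dt = \int_a^T m(t)\,D_t A\,dt = \bigl(D_t A\big|_{t>a}\bigr)\int_a^T m(t)\,dt = 0,
\]
and Proposition \ref{FormuleDiv} delivers the claimed formula for every $u \in \mathcal{P}_0$, hence by linearity for every finite sum in $\mathrm{span}(\mathcal{P}_0)$.

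To finish I would approximate an arbitrary predictable $u \in L^2(\Omega,\mathcal{H})$ by elements of $\mathrm{span}(\mathcal{P}_0)$. A natural family of building blocks consists of ``mean-zero corrections'' of elementary predictable pieces: from $A\mathbf{1}_{(b,c]}$ with $A \in \mathcal{F}_b$, one forms $A\bigl(\mathbf{1}_{(b,c]} - \tfrac{c-b}{T-b}\mathbf{1}_{(b,T]}\bigr)$, which belongs to $\mathcal{P}_0$. Closability of $\delta$ transfers the identity to the left-hand side, while on the right-hand side the map $u \mapsto \int_{(0,T]}u(s)\,dN_s$ and the terms involving $\psi(u,\cdot)$, $\widehat{u}$, $\Gamma_1$ and $\Gamma_2$ are $L^2(\Omega,\mathcal{H})$-continuous on predictable processes under Assumption \ref{ass:lambda_mu}. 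The main obstacle I expect is precisely this density step: the standard monotone-class approximation of predictable processes produces integrands that do not automatically lie in $\mathcal{H}$, so the mean-zero correction above must be incorporated into the approximation scheme in order to simultaneously preserve predictability and land in $\mathrm{span}(\mathcal{P}_0)$.
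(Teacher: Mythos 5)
Your strategy is the same as the paper's at the top level: both proofs establish the formula on a dense subclass of predictable processes by showing that the correction term $\int_0^T D_t(u(t))\,dt$ from Proposition~\ref{FormuleDiv} vanishes, and then pass to the limit using closedness of~$\delta$. The dense subclass you choose, however, is genuinely different. The paper works with step processes $u(t)=f_0\mathbf{1}_{[0,t_1]}(t)+\sum_{j}f_j(\overline{T}_1,\dots,\overline{T}_n)\mathbf{1}_{(t_j,t_{j+1}]}(t)$ with $f_j$ a $C^\infty$ function vanishing outside the simplex $\Delta_n^j$ (this encodes predictability directly on smooth data) and an explicit mean-zero constraint $f_0+\sum_j f_j=0$, and then shows that $D_t u(t)=0$ \emph{pointwise}. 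You instead take building blocks $A\,m(t)$ with $A$ $\mathcal{F}_a$-measurable and $m\in\mathcal{H}$ supported in $(a,T]$, and show only that the \emph{integral} $\int_a^T m(t)\,D_tA\,dt$ vanishes because $t\mapsto D_tA$ is constant on $(a,T]$ while $\int_a^T m=0$. This is a cleaner algebraic observation, but it trades the paper's smoothness hypothesis for a measurability hypothesis, and that is where the difficulties start.

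There are two real gaps. First, the claim that $D_tA$ is constant on $(a,T]$ for an $\mathcal{F}_a$-measurable $A\in\mathbb{D}^{1,2}$ is not immediate from Corollaries~\ref{coro:deriveTj} and~\ref{rem:D:by:parts}, because writing $A=\sum_k\phi_k(T_1,\dots,T_k)\mathbf{1}_{\{N_a=k\}}$ introduces the non-smooth indicators $\mathbf{1}_{\{T_k\le a<T_{k+1}\}}$. Your suggested fix, smoothing these indicators, produces approximants that are no longer $\mathcal{F}_a$-measurable, so the structural property you want to preserve is precisely the one that is destroyed; one would have to argue the $t$-constancy of $D_tA$ by a limiting argument that keeps track of where each approximant's $t$-dependence is supported. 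The paper avoids this entirely by never leaving the class of smooth $f_j$'s.

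Second and more seriously, the density argument does not close as written. Your per-interval correction $A\mathbf{1}_{(b,c]}\mapsto A\bigl(\mathbf{1}_{(b,c]}-\frac{c-b}{T-b}\mathbf{1}_{(b,T]}\bigr)$ produces an $\mathcal{H}$-valued, predictable process, but the sum of these corrections over a step approximation $u_n=\sum_i A_i\mathbf{1}_{(b_i,c_i]}$ need not tend to zero even when $\int_0^Tu_n\,dt\to 0$ in $L^2(\Omega)$: for example, taking $u_n=A\mathbf{1}_{(0,T/2]}-A\mathbf{1}_{(T/2,T]}$, which already integrates to zero, your correction replaces $u_n$ by $u_n-\frac{A}{2}\mathbf{1}_{(0,T]}+A\mathbf{1}_{(T/2,T]}$, an $O(1)$ perturbation. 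One would have to aggregate the mean-zero correction across the whole partition (e.g.\ subtract a single term proportional to $\bigl(\int_0^Tu_n\bigr)\mathbf{1}_{(b_*,T]}$ with $b_*$ bounded away from $T$) rather than correct each interval separately, and then verify that the resulting correction is predictable and lands in $\mathrm{span}(\mathcal{P}_0)$. You flag the density step as the main obstacle, which is a correct diagnosis, but the specific scheme you propose would need to be replaced before the proof is complete.
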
}

\begin{proof}
    We establish this result for an elementary process of the form:
    $$u(t) = f_01_{[0, t_{1}]}(t)+\sum_{j=1}^{n-1} f_j(\overline{T}_1, \cdots, \overline{T}_{n}) \mathbf{1}_{(t_{j}, t_{j+1}]}(t)$$
     where $n\in \mathbb{N}^*$, $t_j=\frac{jT}{n}$, $f_0$ is a constant,  for any $j\in \{ 1,\cdots,n\}$, $f_j$ is an infinitely differentiable function from $\R^n$ into $\R$ vanishing outside the simplex 
     $$\Delta_n^j =\{ (x_1 ,\cdots ,x_n)\in\R^n,\ 0<x_1<\cdots <x_n\leq t_{j}\}$$
     and  $f_0+\sum_{j=1}^{n-1}f_j(\overline{T}_1, \cdots, \overline{T}_{n}) =0$. This last condition ensures that $u$ belongs to $L^2 (\Omega;\mathcal{H})$. 
      We have, by Proposition \ref{FormuleDiv} and Corollary \ref{coro:deriveTj},
    \begin{eqnarray*}
        D_t u(t) & = & \sum_{j=1}^{n-1} \sum_{i=1}^n \dfrac{\partial f_j}{\partial x_i}(\overline{T}_1, \cdots, \overline{T}_{n}) D_t \overline{T}_i \mathbf{1}_{(t_{j}, t_{j+1}]}(t)
        \\ & = & \sum_{j=1}^{n-1}\sum_{i=1}^{n} \dfrac{\partial f_j}{\partial x_i}(\overline{T}_1, \cdots, \overline{T}_{n}) \left( \dfrac{\overline{T}_i}{T} - \mathbf{1}_{[0, \overline{T}_i]}(t) \right) \mathbf{1}_{(t_{j}, t_{j+1}]}(t).
    \end{eqnarray*}
Let $t\in [0,T]$ and $j$ such that $t\in (t_j ,t_{j+1}]$. Since $f_j$ vanishes outside $\Delta_n^j$, we have for any $j\in\{1,\cdots ,n-1\}$ and any $i\in\{1,\cdots ,n\}$
   $$\dfrac{\partial f_j}{\partial x_i}(\overline{T}_1, \cdots, \overline{T}_{n}) \mathbf{1}_{[0, \overline{T}_i]}(t) \mathbf{1}_{(t_{j}, t_{j+1}]}(t) = 0,$$
   hence
   $$D_t u(t)=\sum_{i=1}^{n} \dfrac{\partial f_j}{\partial x_i}(\overline{T}_1, \cdots, \overline{T}_{n})  \dfrac{\overline{T}_i}{T},$$
   but since $f_0+\sum_{j=1}^{n-1}f_j =0$ we have $\sum_{i=1}^{n} \dfrac{\partial f_j}{\partial x_i}=0$ wich proves that $D_tu_t =0$ in that case.

    Hence from Proposition \ref{FormuleDiv}  we deduce that 
$$\delta (u)=\int_{(0,T]} (\psi(u, s) + \widehat{u}(s) \left( \Gamma_1 (s)+\Gamma_2 (s)\right) +  u(s)) dN_s .$$
    We conclude by using a density argument. Indeed if $u\in L^2(\Omega, \mathcal{H})$ is a predictable process, there exists a sequence $(u_n)_{n\in \mathbb{N}}$ of elementary processes as above converging to $u$ in $L^2(\Omega, \mathcal{H})$.  Clearly the sequence of random variables $\left( \int_{(0,T]} (\psi(u_n, s) + \widehat{u_n}(s)  \Gamma_1 (s) +  u_n(s)) dN_s\right)_n$ converges in $L^2(\Omega )$ to $\int_{(0,T]} (\psi(u, s) + \widehat{u_n}(s)  \Gamma_1 (s) +  u(s)) dN_s$. Now since $\Gamma_2$ is bounded, there exists a constant $C>0$ such that we  have for all $n\in\N$
    $$\left|\int_{(0,T]}\widehat{u_n}(s)  \Gamma_2 (s)dN_S-\int_{(0,T]}\widehat{u}(s)  \Gamma_2 (s)dN_S\right| \leq C
    \int_{(0,T]}\left|\widehat{u_n}(s)-\widehat{u}(s)\right|dN_s.$$
    Hence $\left( \int_{(0,T]} \widehat{u_n}(s)  \Gamma_2 (s)dN_s \right)_n $ converges in $L^2 (\Omega )$ to $\int_{(0,T]}\widehat{u}(s)  \Gamma_2 (s)dN_s$. This proves that 
    $\left( \delta (u_n)\right)_n$ converges to $\int_{(0,T]} (\psi(u, s) + \widehat{u}(s) \left( \Gamma_1 (s)+\Gamma_2 (s)\right) +  u(s)) dN_s$. Since $\delta$ is a closed operator, we conclude that $u$ belongs to $\text{Dom}(\delta )$ and that 
    $$\delta (u)=\int_{(0,T]} (\psi(u, s) + \widehat{u}(s)\left( \Gamma_1 (s)+\Gamma_2 (s)\right) +  u(s)) dN_s .$$
\end{proof}

\begin{Proposition}
    Let $F \in \mathbb{D}^{1,2}$ and $X \in \text{Dom}(\delta)$ such that 
    $$F\delta(X) - \int_0^T D_t F X_t dt \in L^2(\Omega).$$
    Then $FX \in \text{Dom}(\delta)$ and
    $$\delta(FX) = F\delta(X) - \int_0^T D_t FX_t dt.$$
\end{Proposition}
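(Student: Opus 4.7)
The plan is to verify the defining duality property of $\delta$: setting $Y := F\delta(X) - \int_0^T D_tF\, X_t\, dt \in L^2(\Omega)$ by hypothesis, I will show that for every $H$ in a dense subspace of $\mathbb{D}^{1,2}$
\begin{equation*}
\mathbb{E}[\langle FX, DH\rangle_{\mathcal{H}}] = \mathbb{E}[HY].
\end{equation*}
Since $|\mathbb{E}[HY]| \leq \|Y\|_{L^2}\|H\|_{L^2} \leq \|Y\|_{L^2}\|H\|_{\mathbb{D}^{1,2}}$, this duality estimate (once $FX$ is established to lie in $L^2(\Omega;\mathcal{H})$) delivers $FX \in \mathrm{Dom}(\delta)$ together with $\delta(FX) = Y$.

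The central algebraic tool is the product rule supplied by Corollary \ref{rem:D:by:parts} (applied to $(x,y) \mapsto \chi(x)\chi(y)xy$ where $\chi$ is a smooth cutoff, so that the required $C^1$ function with bounded derivatives hypothesis is satisfied on bounded sets, then removing the truncation via the locality of the Dirichlet form): if $F$ and $H$ are both in $\mathbb{D}^{1,2}$ and bounded, then $FH \in \mathbb{D}^{1,2}$ and $D(FH) = F\,DH + H\,DF$. Assuming first that $F$ is bounded and $H \in \mathcal{S}$ is bounded with bounded derivatives, the duality computation runs
\begin{align*}
\mathbb{E}[\langle FX, DH\rangle_{\mathcal{H}}]
&= \mathbb{E}[\langle X, F\,DH\rangle_{\mathcal{H}}] = \mathbb{E}[\langle X, D(FH)\rangle_{\mathcal{H}}] - \mathbb{E}[H\langle X, DF\rangle_{\mathcal{H}}] \\
&= \mathbb{E}[FH\,\delta(X)] - \mathbb{E}\!\left[H\!\int_0^T\! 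X_t D_t F\, dt\right] = \mathbb{E}[HY],
\end{align*}
using $FH \in \mathbb{D}^{1,2}$ and $X \in \mathrm{Dom}(\delta)$ in the middle equality. This disposes of the bounded case.

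For an unbounded $F$ satisfying the hypothesis, the plan is to truncate via $F_N := \phi_N(F)$ where $\phi_N$ is a smooth odd function with $|\phi_N(x)| \leq |x|\wedge N$ and $\phi_N'(x) \to 1$ boundedly; then $F_N \to F$ in $\mathbb{D}^{1,2}$ with $DF_N = \phi_N'(F)\,DF \to DF$ in $L^2(\Omega;\mathcal{H})$ by dominated convergence. The bounded case gives
\begin{equation*}
\delta(F_N X) = F_N\delta(X) - \int_0^T D_t F_N\, X_t\, dt
\end{equation*}
for every $N$. Passing $N\to\infty$, $F_N X \to FX$ in $L^2(\Omega;\mathcal{H})$ and the right-hand side should converge to $Y$ in $L^2(\Omega)$; by the closedness of the adjoint $\delta$, this gives $FX \in \mathrm{Dom}(\delta)$ and $\delta(FX) = Y$.

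The main obstacle is precisely the last limiting step. The hypothesis only asserts that the \emph{combination} $F\delta(X) - \int D_tF\, X_t\, dt$ lies in $L^2(\Omega)$, not the two pieces individually, and likewise $FX \in L^2(\Omega;\mathcal{H})$ is not automatic from $F \in L^2(\Omega)$ and $X \in L^2(\Omega;\mathcal{H})$. The limiting argument must therefore be carried out on the combined right-hand side rather than piecewise: one shows that the cross-term corrections produced by $\phi_N'(F) \neq 1$ telescope using the product structure of the expression, so that $(F_N\delta(X) - \int D_tF_N X_t dt)_N$ is Cauchy in $L^2(\Omega)$ with limit $Y$, and similarly that $(F_N X)_N$ is Cauchy in $L^2(\Omega;\mathcal{H})$ under the same hypothesis (or, if needed, upgraded by localisation with an additional cutoff on $\|X\|_\mathcal{H}$). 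Invoking closedness of $\delta$ then concludes.
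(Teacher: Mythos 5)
You take the same route as the paper: verify the duality identity $\mathbb{E}[\langle FX, DG\rangle_{\mathcal{H}}] = \mathbb{E}[G\,Y]$ against test functions $G \in \mathcal{S}$, then invoke the density of $\mathcal{S}$ in $\mathbb{D}^{1,2}$ and the boundedness $|\mathbb{E}[GY]| \leq \|Y\|_{L^2}\|G\|_{\mathbb{D}^{1,2}}$. The difference is that the paper's four-line computation silently applies $D_t(GF) = G\,D_tF + F\,D_tG$ for $F \in \mathbb{D}^{1,2}$, $G \in \mathcal{S}$, whereas you try to justify it.

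Two remarks on your justification. First, your truncation $F_N = \phi_N(F)$ is more complicated than necessary and is not carried through: since $G$ and $DG$ are bounded for $G \in \mathcal{S}$ (the $f_n$ only act on $[0,T]^n$), one can instead approximate $F$ directly by $F^k \in \mathcal{S}$, use $GF^k \in \mathcal{S}$ and $D(GF^k) = G\,DF^k + F^k\,DG$ from Proposition \ref{functCalculus}, and pass to the limit by closedness of $D$ — boundedness of $G$ and $DG$ is exactly what makes both products converge in $L^2(\Omega;\mathcal{H})$, with no cross-terms to "telescope." This dispenses with the limiting step that you yourself single out as the main obstacle and leave open (your final paragraph is a plan, not a proof; the claim that $(F_N\delta(X) - \int D_tF_N\,X_t\,dt)_N$ is Cauchy under the stated hypothesis is not clearly achievable, since only the combined quantity $Y$ is known to be in $L^2$). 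Second — and here you are genuinely ahead of the paper — you are right that $FX \in L^2(\Omega;\mathcal{H})$, a prerequisite for $FX \in \text{Dom}(\delta)$, does not follow from $F \in L^2(\Omega)$ and $X \in L^2(\Omega;\mathcal{H})$, and the hypothesis $F\delta(X) - \int D_tF\,X_t\,dt \in L^2(\Omega)$ does not supply it. The paper's proof does not address this either; it should be added as an explicit assumption, after which the argument above closes cleanly.
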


\begin{proof}
    For any $G \in \mathcal{S}$
    \begin{align*}
        & \mathbb{E}[\delta(FX) G] =  \mathbb{E}\left[ \int_0^T F X_t D_t G dt \right]
         =  \mathbb{E}\left[ \int_0^T X_t (D_t (GF) - G D_t F) dt \right]
        \\ & \quad =  \mathbb{E}\left[ \delta(X) GF - G \int_0^T X_t D_t F dt \right] =  \mathbb{E}\left[ G \left( F\delta(X) - \int_0^T D_t F X_t dt \right) \right].
    \end{align*}
\end{proof}

In particular if $X = m \in \mathcal{H} \subset \text{Dom}(\delta)$ then { $\delta(m) = \int_{(0,T]} (\psi(m, s) + \widehat{m}(s) \left( \Gamma_1 (s)+\Gamma_2 (s)\right) +  m(s)) dN_s$ and $\int_0^T D_t F m(t) dt = D_m F$. Hence we have
$$\delta(mF) = F \int_{(0,T]} (\psi(m, s) + \widehat{m}(s) \left( \Gamma_1 (s)+\Gamma_2 (s)\right) +  m(s)) dN_s - D_m F.$$}

\begin{Remark} \label{DN=0}
    We do not have the Clark-Ocone formula because for $F = N_T \in \mathbb{D}^{1,2}$ we have $N_T \neq \mathbb{E}[N_T]$ and $D_t N_T = 0$. Indeed, for any $m\in \mathcal{H}$ and $\varepsilon \in \mathbb{R}_+^*, \mathcal{T}_\varepsilon N_T = N_T$. 
\end{Remark}

\section{Absolute continuity criterion} \label{section:criterion}

\subsection{Local criterion}

Now fix $n\in \mathbb{N}^*$, as usual  $C^{\infty}(\R^n )$ denotes the set of infinitely differentiable functions on $\R^n$. Evoke the density $k_n$ defined by \eqref{eq:cond_dens_jump_times} in Lemma \ref{lemma:density}. We consider the following quadratic form on $C^{\infty}(\R^n )$:
$$e_n(u,v) = \dfrac{1}{2} \int_{\mathbb{R}^n} \sum_{i,j = 1}^n \dfrac{\partial u}{\partial t_i}(t) \dfrac{\partial v}{\partial t_j}(t) \left( t_i \wedge t_j - \dfrac{t_i t_j}{T} \right) k_n(t) dt$$
and
\begin{eqnarray*}
    e_n(u) & = & e_n(u,u).
\end{eqnarray*}

\begin{Proposition} \label{propo:critere:local}~
    \begin{enumerate}
        \item $(C^{\infty}(\R^n ), e_n)$ is closable, its closure $(d_n, e_n)$ defines a local Dirichlet form on $L^2 (k_n(t) dt)$ and each $u\in d_n$ is a $\mathcal{B}(\mathbb{R}^n)$-measurable function in $L^2(k_n(t) dt)$ such that for any $i\in \{1, \cdots, n\}$ and for almost all $$\widetilde{t} = (t_1, \cdots, t_{i-1}, t_{i+1}, \cdots, t_n) \in \mathbb{R}^{n-1},$$ the function $$s\longmapsto u^{(i)}_{\widetilde{t}}(s) = u(t_1, \cdots, t_{i-1},s, t_{i+1}, \cdots, t_n)$$ has an absolutely continuous version $\widetilde{u}_{\widetilde{t}}^{(i)}$ on $[t_{i-1}, t_{i+1}]$ such that
        $$\sum_{i,j = 1}^n \dfrac{\partial u}{\partial t_i}(t) \dfrac{\partial u}{\partial t_j}(t) \left(t_i \wedge t_j - \dfrac{t_i t_j}{T} \right) \in L^1(k_n(t) dt)$$
        where $\dfrac{\partial u}{\partial t_i} = \dfrac{\partial \widetilde{u}_{\widetilde{t}}^{(i)}}{\partial s}$. 
        \item The Dirichlet form $(d_n, e_n)$  admits a carré du champ operator $\gamma_n$ and a gradient operator $\widetilde{D}^n$ given by
        $$\gamma_n[u,v](t) = \sum_{i,j = 1}^n \dfrac{\partial u}{\partial t_i}(t) \dfrac{\partial v}{\partial t_j}(t) \left( t_i \wedge t_j - \dfrac{t_i t_j}{T} \right)$$
        and
        $$\widetilde{D}_s^nu(t) = \sum_{i=1}^n \dfrac{\partial u}{\partial t_i}(t) \left( \dfrac{t_i}{T} - \mathbf{1}_{[0,t_i]}(s) \right)$$
        for all $u,v \in d_n, t = (t_1, \cdots, t_n) \in \mathbb{R}^n$ and $s \in [0,T]$.
        \item The structure $(\mathbb{R}^n, \mathcal{B}(\mathbb{R}^n), k_n(t) dt, d_n, \gamma_n)$ satisfies for every $d \in \mathbb{N}^*, u = (u_1, \cdots, u_d) \in (d_n)^d$,
        $$u_*[\det(\gamma_n[u]) \cdot k_n \nu_n] \ll \nu_d$$
        where $\gamma_n[u]$ denotes the matrix $(\gamma_n(u_i, u_j))_{1\leq i,j\leq d}$, $\nu_n$ (resp. $\nu_d$) the Lebesgue measure on $\mathbb{R}^n$ (resp. $\mathbb{R}^d$) and $u_*[\det(\gamma_n[u]) \cdot k_n \nu_n]$ the image measure defined by, for any $B \in \mathcal{B}(\mathbb{R}^d)$,
        \begin{eqnarray*}
            (u_*[\det(\gamma_n[u]) \cdot k_n \nu_n])(B) & = & [\det(\gamma_n[u]) \cdot k_n \nu_n](u^{-1}(B))
            \\ & = & \int_{u^{-1}(B)} \det(\gamma_n[u,u](t)) k_n(t) dt.
        \end{eqnarray*}
    \end{enumerate}
\end{Proposition}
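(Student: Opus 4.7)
My plan is to derive all three assertions by transferring the global Dirichlet structure $(\mathbb{D}^{1,2}, \mathcal{E})$ of Proposition \ref{propo:local:dirichlet} to the finite-dimensional setting $\R^n$ via the identification of smooth functions on $\R^n$ with functions on $\Omega$ supported on $\{N_T = n\}$.

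The first step is to establish closability. For $f \in C^\infty(\R^n)$ I would set $F = f(T_1, \dots, T_n) \mathbf{1}_{\{N_T = n\}}$. Lemma \ref{lemma:density} immediately gives $\E[F^2] = \P(N_T = n) \int_{\R^n} f(t)^2 k_n(t)\,dt$, while Corollary \ref{coro:deriveTj} combined with the chain rule yields
$$D F = \sum_{j=1}^n \frac{\partial f}{\partial t_j}(T_1, \dots, T_n) \left( \frac{T_j}{T} - \mathbf{1}_{[0,T_j]} \right) \mathbf{1}_{\{N_T = n\}}.$$
Combined with the elementary identity
$$\int_0^T \left( \frac{t_i}{T} - \mathbf{1}_{[0,t_i]}(s) \right)\left( \frac{t_j}{T} - \mathbf{1}_{[0,t_j]}(s) \right) ds = t_i \wedge t_j - \frac{t_i t_j}{T},$$
this gives $\mathcal{E}(F) = 2\, \P(N_T = n)\, e_n(f)$. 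Closability of $(C^\infty(\R^n), e_n)$ on $L^2(k_n(t)dt)$ then transfers from closedness of $(\mathbb{D}^{1,2}, \mathcal{E})$: any $e_n$-Cauchy sequence $(f_m) \subset C^\infty(\R^n)$ with $f_m \to 0$ in $L^2(k_n(t)dt)$ lifts via $F_m = f_m(T_1, \dots, T_n) \mathbf{1}_{\{N_T=n\}}$ to an $\mathcal{E}$-Cauchy sequence with $F_m \to 0$ in $L^2(\Omega)$, forcing $e_n(f_m) \to 0$.

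The second step yields both the formulas in (2) and the sectional absolute continuity in (1) from the same two identities: they directly produce the claimed expressions of $\widetilde{D}^n u$ and $\gamma_n[u,v]$ (the condition $\widetilde{D}^n u(t) \in \mathcal{H}$ reducing to $\int_0^T (t_i/T - \mathbf{1}_{[0,t_i]}(s)) ds = 0$), and the characterization of $d_n$ as the space of functions absolutely continuous along each coordinate is then a classical weighted-Sobolev argument. Specifically, under Assumption \ref{ass:lambda_mu}, $k_n$ is continuous and strictly positive on the open simplex $\{0 < t_1 < \cdots < t_n < T\}$ and bounded above (by \eqref{eq:bound_dens_jump_times}), so $e_n$ is locally uniformly elliptic there; Fubini combined with the standard $L^2$-derivative characterization of one-dimensional absolute continuity then produces, for a.e. $\widetilde{t} \in \R^{n-1}$, the absolutely continuous version $\widetilde{u}^{(i)}_{\widetilde{t}}$.

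For the third step, I would invoke the general Bouleau-Hirsch criterion for local Dirichlet structures on $\R^n$ whose reference measure is absolutely continuous with respect to Lebesgue and which admit a carré du champ (cf.\ \cite[Chap.~II]{bouleau:hirsch}, see also \cite{bouleau:denis}): since $k_n(t)dt \ll \nu_n$ and $(d_n, \gamma_n)$ is a local Dirichlet structure with carré du champ containing each coordinate projection (with finite energy $\gamma_n[t_i](t) = t_i - t_i^2/T$), this directly yields the EID property $u_*[\det(\gamma_n[u]) \cdot k_n \nu_n] \ll \nu_d$ for every $d \in \N^*$ and $u \in (d_n)^d$. The main obstacle I anticipate is the sectional description in (1), since $k_n$ is discontinuously cut off at the boundary of $\{0 < t_1 < \cdots < t_n \leq T\}$; this is handled by localization on compact interior subsets of the simplex, where the coefficient matrix $(t_i\wedge t_j - t_it_j/T)$ is uniformly positive definite, followed by a monotone approximation argument to recover the full space $C^\infty(\R^n)$.
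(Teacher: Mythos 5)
Your closability argument takes a genuinely different route from the paper: you transfer closability of $(C^{\infty}(\R^n),e_n)$ from the closedness of the global form $(\mathbb{D}^{1,2},\mathcal{E})$ via the lift $f\mapsto f(T_1,\ldots,T_n)\mathbf{1}_{\{N_T=n\}}$ and the identity $\mathcal{E}(F)=2\,\mathbb{P}(N_T=n)\,e_n(f)$, and this is a valid and clean argument because $(\mathbb{D}^{1,2},\mathcal{E})$ was already closed in Section 3 independently of the present proposition. The paper instead works entirely on the finite-dimensional side: it verifies the two hypotheses \emph{(HG)} of \cite[Proposition 2.30]{bouleau:denis} for the pair $(k_n,\xi_{ij})$ with $\xi_{ij}(t)=t_i\wedge t_j-\tfrac{t_it_j}{T}$, namely the Hamza-type condition (local integrability of $1/k_n$ along each coordinate line, which follows from $k_n\geq\lambda_*>0$ on the open simplex) and the local uniform positive definiteness of $(\xi_{ij})$ (proved by an explicit rewriting of the quadratic form), and deduces in one step that $(\tilde d_n,e_n)$ is a local Dirichlet form containing the closure $d_n$ of $C^\infty(\R^n)$. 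The paper's route is more self-contained and simultaneously yields the sectional absolute-continuity description $d_n\subset\tilde d_n$ and, via \cite[Theorem 2.31]{bouleau:denis}, the EID property of item (3); your transfer argument buys closability quickly but still requires you to redo essentially the same Hamza/ellipticity verification to characterise $d_n$, which you only sketch (your localisation remark and appeal to a ``classical weighted-Sobolev argument'' is precisely the content of those verifications). Two small imprecisions worth flagging: the EID property is not automatic for an arbitrary local Dirichlet structure on $\R^n$ with Lebesgue-absolutely-continuous reference measure and carré du champ — it is a conjecture in that generality, and what is actually being used is the specific result \cite[Theorem 2.31]{bouleau:denis} for weighted Sobolev-type structures satisfying (HG); and your transfer argument establishes that the closure of $(C^\infty(\R^n),e_n)$ is a closed form, but the Markovian (contraction) property of that closure, needed to call it a Dirichlet form, is not addressed — it either comes for free from Proposition 2.30 in the paper's route or would require a separate (easy) verification in yours.
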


\begin{proof}
    We prove this result thanks to \cite[Proposition 2.30 and Theorem 2.31]{bouleau:denis} with
    $$k = k_n, \quad d = \tilde{d}_n, \quad \xi_{ij}(t) = t_i \wedge t_j - \dfrac{t_i t_j}{T},$$
    where $\tilde{d}_n$ is the set of $\mathcal{B}(\mathbb{R}^n)$-measurable functions $u\in L^2(k_n(t) dt)$ such that for any $i\in \{1, \cdots, n\}$ and for almost all $$\widetilde{t} = (t_1, \cdots, t_{i-1}, t_{i+1}, \cdots, t_n) \in \mathbb{R}^{n-1},$$ the function $$s\longmapsto u^{(i)}_{\widetilde{t}}(s) = u(t_1, \cdots, t_{i-1},s, t_{i+1}, \cdots, t_n)$$ has an absolutely continuous version $\widetilde{u}_{\widetilde{t}}^{(i)}$ on $[t_{i-1}, t_{i+1}]$ such that
$$\sum_{i,j = 1}^n \dfrac{\partial u}{\partial t_i}(t) \dfrac{\partial u}{\partial t_j}(t) \left(t_i \wedge t_j - \dfrac{t_i t_j}{T} \right) \in L^1(k_n(t) dt)$$
where $\dfrac{\partial u}{\partial t_i} = \dfrac{\partial \widetilde{u}_{\widetilde{t}}^{(i)}}{\partial s}$ and set for any $u,v\in \tilde{d}_n$:
$$e_n(u,v) = \dfrac{1}{2} \int_{\mathbb{R}^n} \sum_{i,j = 1}^n \dfrac{\partial u}{\partial t_i}(t) \dfrac{\partial v}{\partial t_j}(t) \left( t_i \wedge t_j - \dfrac{t_i t_j}{T} \right) k_n(t) dt.$$

    The function $k = k_n : \mathbb{R}^n \longrightarrow \mathbb{R}_+$ is measurable and the functions $\xi_{i,j}$ are symmetric Borel function. We have to check if the two conditions  (HG) of \cite{bouleau:denis} are satisfied.
    
\noindent  { {\bf Condition 1.} Let  $i\in \{1, ..., n\}$ and $\tilde t = (t_1, \cdots, t_{i-1}, t_{i+1}, \cdots, t_n)$ with $0<t_1 < \cdots < t_{i-1} < t_{i+1} < \cdots < t_n < T$. We clearly have $ k_{n,\overline{t}}^{(i)}(s)=0$ if $s\notin ]t_{i-1},t_{i+1}[$. Moreover,  since for all $s\in ]t_{i-1},t_{i+1}[$, 
$k_{n,\overline{t}}^{(i)}(s)\geq \lambda_* >0$, $\left( k_{n,\overline{t}}^{(i)}\right)^{-1}$ is locally integrable on $]t_{i-1},t_{i+1}[$. So, condition 1 is satisfied.}

\noindent {\bf Condition 2.}  For any $t = (t_1, \cdots, t_n) \in \mathbb{R}^n$ and any $c\in \mathbb{R}^n$, such that $0=t_0 < t_1 < t_2 < \cdots < t_n < T=t_{n+1}$,
        \begin{eqnarray*}
            \sum_{i,j=1}^n \xi_{ij}(t) c_i c_j & = & \sum_{i,j = 1}^n \left(t_i \wedge t_j - \dfrac{t_i t_j}{T} \right) c_i c_j
            \\ & = & \dfrac{1}{T}\sum_{i = 1}^n t_i \left(T - t_i \right) c_i^2+  \dfrac{2}{T} \sum_{1 \leq i < j\leq n} t_i \left(T - t_j \right) c_i c_j
        \end{eqnarray*}
        This double sum can be split as follows:
        \begin{align*}
            &\sum_{i = 1}^n t_i \left(T - t_i \right) c_i^2 = \sum_{i = 1}^n  \sum_{k=1}^i (t_k-t_{k-1}) \sum_{\ell = i}^n (t_{\ell+1} - t_\ell ) c_i^2 \\
            & \quad =  \sum_{k=1}^n (t_k-t_{k-1}) \sum_{\ell = k}^n (t_{\ell+1} - t_\ell ) \sum_{i=k}^\ell c_i^2 \\
            & \quad = \sum_{k=1}^n (t_k-t_{k-1})(t_{k+1}-t_k) c_k^2 + \sum_{k=1}^{n-1} (t_k-t_{k-1}) \sum_{\ell = k+1}^n (t_{\ell+1} - t_\ell ) \sum_{i=k}^\ell c_i^2
        \end{align*}
        and 
        \begin{align*}
            & 2 \sum_{1 \leq i < j\leq n} t_i \left(T - t_j \right) c_i c_j = 2 \sum_{1 \leq i < j\leq n}  \sum_{k=1}^i (t_k-t_{k-1}) \sum_{\ell = j}^n (t_{\ell+1} - t_\ell )c_ic_j \\
            & \quad = 2 \sum_{k=1}^{n-1} (t_k-t_{k-1}) \sum_{k \leq i < j\leq n}  \sum_{\ell = j}^n (t_{\ell+1} - t_\ell )c_ic_j \\
            & \quad = 2 \sum_{k=1}^{n-1} (t_k-t_{k-1})\sum_{\ell = k+1}^n (t_{\ell+1} - t_\ell ) \sum_{k \leq i < j\leq \ell}  c_ic_j 
        \end{align*}
        Coming back to the initial sum, we have 
        \begin{eqnarray*}
            \sum_{i,j=1}^n \xi_{ij}(t) c_i c_j & = & \dfrac{1}{T} \sum_{k=1}^n (t_k-t_{k-1})(t_{k+1}-t_k) c_k^2 \\
            &  + & \dfrac{1}{T} \sum_{k=1}^{n-1} (t_k-t_{k-1}) \sum_{\ell = k+1}^n (t_{\ell+1} - t_\ell ) \left[  \sum_{i=k}^\ell c_i^2 + 2 \sum_{k \leq i < j\leq \ell}  c_ic_j \right] \\
            & = & \dfrac{1}{T} \sum_{k=1}^n (t_k-t_{k-1})(t_{k+1}-t_k) c_k^2 \\
            & +&  \dfrac{1}{T} \sum_{k=1}^{n-1} (t_k-t_{k-1}) \sum_{\ell = k+1}^n (t_{\ell+1} - t_\ell ) \left[  \sum_{i,j=k}^\ell (c_i+c_j)^2  \right] \\
            & \geq & \dfrac{1}{T} \sum_{k=1}^n (t_k-t_{k-1})(t_{k+1}-t_k) c_k^2.
        \end{eqnarray*}
        Thus, for any compact $K \subset \{(t_1, \cdots, t_n), \quad 0 = t_0 < t_1 < \cdots < t_n < T = t_{n+1}\}$, there exists $c \in \mathbb{R}_+^*$ such that, for any $(t_1, \cdots, t_n) \in K$,
        $$\sum_{i,j=1}^n \xi_{ij}(t) c_i c_j \geq \dfrac{c^2}{T} \sum_{k=1}^n c_k^2.$$

The hypotheses $(HG)$ of \cite[Proposition 2.30]{bouleau:denis} being satisfied, we conclude that $(\tilde{d}_n ,e_n)$ is a local Dirichlet hence $(C^{\infty} (\R^n ), e_n )$ is closable and its closure $(d_n ,e_n)$ is such that $d_n\subset \tilde{d}_n $. The last assertion is a consequence of \cite[Theorem 2.31]{bouleau:denis}.
\end{proof}

Using this result, we consider $\lVert \cdot \rVert_{d_n}$ the norm on $d_n$ defined by: 
    $$\forall u \in d_n,\ \lVert u \rVert^2_{d_n} = \lVert u \rVert^2_{L^2(k_n(t)dt)} + 2 e_n(u).$$

\subsection{Global criterion}

    We remind that for any $F \in L^0(\Omega)$, there exists $a \in \mathbb{R}$ and $f_n : \mathbb{R}^n \longrightarrow \mathbb{R}, n\in \mathbb{N}^*,$ measurable such that, $\mathbb{P}$-almost surely,
    \begin{equation} \label{eq:measurable}
        F = a 1_{\{N_T = 0\}} + \sum_{n=1}^{\infty} f_n(T_1, \cdots, T_n) \mathbf{1}_{\{N_T = n\}}.
    \end{equation}

\begin{Proposition} \label{propo:decomposition}
    Let $F \in L^0(\Omega)$ of the form \eqref{eq:measurable}. Then $F \in \mathbb{D}^{1,2}$ if and only if $f_n \in d_n$ for any $n\in \mathbb{N}^*$ and
    $$\sum_{n=1}^{
    \infty} \lVert f_n \rVert^2_{d_n} \mathbb{P}(N_T = n) < \infty.$$
    In this case
    $$\lVert F \rVert_{\mathbb{D}^{1,2}}^2 = a^2 \mathbb{P}(N_T = 0) + \sum_{n=1}^{\infty} \lVert f_n \rVert^2_{d_n} \mathbb{P}(N_T = n).$$
\end{Proposition}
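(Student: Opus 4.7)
The plan is to establish an orthogonal decomposition of $\mathbb{D}^{1,2}$ along the disjoint events $\{N_T=n\}$, identify each piece with the local space $d_n$ from Proposition \ref{propo:critere:local}, and then sum. Set $F_0 = a\mathbf{1}_{\{N_T=0\}}$ and $F_n = f_n(T_1,\ldots,T_n)\mathbf{1}_{\{N_T=n\}}$ for $n\geq 1$. Since the events $\{N_T=n\}$ partition $\Omega$, Lemma \ref{lemma:density} immediately gives
\begin{equation*}
\|F\|_{L^2(\Omega)}^2 = a^2 \mathbb{P}(N_T=0) + \sum_{n\geq 1} \mathbb{P}(N_T=n) \int_{\mathbb{R}^n} f_n(t)^2 k_n(t)\,dt,
\end{equation*}
so the $L^2$ part of the norm decomposes exactly as claimed. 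The key observation for the Dirichlet energy is that $N_T$ is invariant under each $\mathcal{T}_\varepsilon$ (the reparametrization preserves the number of jumps), hence $\mathbf{1}_{\{N_T=n\}}\in \mathbb{D}^{1,2}$ with $D\mathbf{1}_{\{N_T=n\}}=0$. By the local property of the Dirichlet form (Proposition \ref{propo:local:dirichlet}) it follows that for $m\neq n$ we have $\Gamma[F_m,F_n]=0$, so the $F_n$ are mutually orthogonal both in $L^2(\Omega)$ and in the energy form.

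Next I would compute $\mathcal{E}(F_n)$ explicitly when $f_n\in C^\infty(\mathbb{R}^n)$. By Corollary \ref{coro:deriveTj} and the chain rule (Corollary \ref{rem:D:by:parts}), together with $D\mathbf{1}_{\{N_T=n\}}=0$, on $\{N_T=n\}$
\begin{equation*}
DF_n = \sum_{i=1}^n \partial_i f_n(T_1,\ldots,T_n)\left(\tfrac{T_i}{T}-\mathbf{1}_{[0,T_i]}\right).
\end{equation*}
A direct computation of the $L^2([0,T])$ inner product yields
\begin{equation*}
\left\langle \tfrac{T_i}{T}-\mathbf{1}_{[0,T_i]},\,\tfrac{T_j}{T}-\mathbf{1}_{[0,T_j]}\right\rangle_{\mathcal{H}} = T_i\wedge T_j - \tfrac{T_iT_j}{T},
\end{equation*}
so that on $\{N_T=n\}$, $\Gamma[F_n] = \gamma_n[f_n,f_n](T_1,\ldots,T_n)$, and integrating against $k_n$ via Lemma \ref{lemma:density} gives $\mathcal{E}(F_n) = \mathbb{E}[\Gamma[F_n]] = 2\,e_n(f_n)\,\mathbb{P}(N_T=n)$. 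Combined with the $L^2$ computation, this shows that for $f_n \in C^\infty(\mathbb{R}^n)$, $\|F_n\|_{\mathbb{D}^{1,2}}^2 = \|f_n\|_{d_n}^2\,\mathbb{P}(N_T=n)$, that is, the map $\iota_n : f_n \mapsto F_n$ is an isometry (up to scaling) from a dense subspace of $d_n$ into $\mathbb{D}^{1,2}$. By completeness of $d_n$ and $\mathbb{D}^{1,2}$, $\iota_n$ extends to an isometric embedding of $d_n$; this proves that $f_n\in d_n$ implies $F_n\in\mathbb{D}^{1,2}$ with the asserted norm identity.

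The main obstacle is the converse: showing that $F_n\in\mathbb{D}^{1,2}$ forces $f_n\in d_n$. For this I would exploit the density of $\mathcal{S}$ in $\mathbb{D}^{1,2}$ (Proposition \ref{propo:local:dirichlet}). Given $F_n\in\mathbb{D}^{1,2}$, pick $G^k\in\mathcal{S}$ with $G^k\to F_n$ in $\mathbb{D}^{1,2}$, and set $\widetilde G^k := G^k \mathbf{1}_{\{N_T=n\}}$. Because $\mathbf{1}_{\{N_T=n\}}$ is bounded with zero gradient, the chain/product rule yields $D\widetilde G^k = \mathbf{1}_{\{N_T=n\}} DG^k$, whence $\widetilde G^k \to F_n$ in $\mathbb{D}^{1,2}$ as well. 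Each $\widetilde G^k$ is of the form $g^k_n(T_1,\ldots,T_n)\mathbf{1}_{\{N_T=n\}}$ with $g^k_n$ smooth; by the isometry established above, $(g^k_n)$ is Cauchy in $d_n$, so converges to some $\widetilde f_n\in d_n$ whose image in $\mathbb{D}^{1,2}$ is $F_n$. Injectivity of $\iota_n$ (clear from the $L^2$ side, since $k_n>0$ on $\{0<t_1<\cdots<t_n\leq T\}$) forces $\widetilde f_n = f_n$ a.e., so $f_n\in d_n$. Finally, applying this piece-by-piece and using orthogonality (over the partial sums $\sum_{n=0}^N F_n$, which converge in $\mathbb{D}^{1,2}$ iff the squared norms are summable) yields the full equivalence and the Parseval-type identity for $\|F\|_{\mathbb{D}^{1,2}}^2$.
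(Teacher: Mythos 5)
Your proof is correct and relies on the same ingredients as the paper's: the identity $\mathcal{E}(F_n)=2\,e_n(f_n)\,\mathbb{P}(N_T=n)$ for smooth $f_n$ coming from Corollary \ref{coro:deriveTj}, the density of $\mathcal{S}$ in $\mathbb{D}^{1,2}$, and a Cauchy argument in each $d_n$ to pass from $\mathcal{S}$ to the general case and back. The only difference is presentational: you make the underlying Hilbert-space structure explicit (orthogonal decomposition along $\{N_T=n\}$ via $D\mathbf{1}_{\{N_T=n\}}=0$, isometric embeddings $\iota_n:d_n\hookrightarrow\mathbb{D}^{1,2}$, and the localization $\widetilde G^k=G^k\mathbf{1}_{\{N_T=n\}}$), whereas the paper proceeds directly with the estimate $\|f_n^k-f_n^\ell\|_{d_n}^2\,\mathbb{P}(N_T=n)\leq\|F^k-F^\ell\|_{\mathbb{D}^{1,2}}^2$ on approximating sequences in $\mathcal{S}$.
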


\begin{proof}~
    Let $F = a 1_{\{N_T = 0\}} + \sum_{n = 1}^d  f_{n}(T_1, T_2, \cdots, T_n)  \mathbf{1}_{\{N_T = n\}}$ be in $\mathcal{S}$. Then as a consequence of Corollary \ref{coro:deriveTj}, $F$ belongs to $\D^{1,2}$ and 
$$
        \mathcal{E}(F,F)=\E \left[\int_0^T |D_s F|^2\, ds\right]=2\sum_{n = 1}^d  \mathbb{P}(N_T =n)e_n (f_n).
$$
    Hence
    $$\lVert F \rVert_{\mathbb{D}^{1,2}}^2 = a^2 \mathbb{P}(N_T = 0) + \sum_{n=1}^{d} \lVert f_n \rVert^2_{d_n} \mathbb{P}(N_T = n).$$
    We conclude using a density argument. Indeed, if $F$ belongs to $\D^{1,2}$, there exists a sequence $(F^k)_k$ in $\mathcal{S}$ converging to $F$ in $\D^{1,2}$. Now if for any $k$
    $$F^k = a_k 1_{\{N_T = 0\}} + \sum_{n = 1}^{\infty}  f^k_{n}(T_1, T_2, \cdots, T_n)  \mathbf{1}_{\{N_T = n\}}$$
    with $f_n^k \in C^{\infty}(\R^d )$ and $f_n^k=0$ for $n$ large, then clearly for all $n,k,\ell$:
    $$\| f^k_n -f^\ell_n\|_{d_n}^2\P( N_T =n)\leq \| F^k -F^\ell\|_{\D^{1,2}}^2$$
    hence $(f^k_n )_n$ converges to an element $f_n$ in $d_n$, $a^k$ tends to a real number $a$ and we get that 
    $$F=a 1_{\{N_T = 0\}} + \sum_{n = 1}^{\infty}  f_{n}(T_1, T_2, \cdots, T_n) \mathbf{1}_{\{N_T = n\}},$$
    and 
    $$\lVert F \rVert_{\mathbb{D}^{1,2}}^2 =\lim_{m\rightarrow +\infty}\lVert F 1_{\{N_T \leq m \}}\rVert_{\mathbb{D}^{1,2}}^2= a^2 \mathbb{P}(N_T = 0) + \sum_{n=1}^{\infty} \lVert f_n \rVert^2_{d_n} \mathbb{P}(N_T = n).$$
    Conversely, if  $F \in L^0(\Omega)$ of the form \eqref{eq:measurable} is such that  $f_n \in d_n$ for any $n\in \mathbb{N}^*$ and
    $$\sum_{n=1}^{\infty} \lVert f_n \rVert^2_{d_n} \mathbb{P}(N_T = n) < +\infty,$$
    then define for any $m\in\mathbb{N}^*$, $F^m=a 1_{\{N_T = 0\}} + \sum_{n = 1}^{m}  f_{n}(T_1, T_2, \cdots, T_n)  \mathbf{1}_{\{N_T = n\}}$ by approximating each $f_n$ for $n\in \{1,\cdots ,m\}$ by a sequence of functions in $C^{\infty} (\R^d)$ we easily get that $F^m$ belong to $\D^{1,2}$ and 
    $$\lVert F^m \rVert_{\mathbb{D}^{1,2}}^2 = a^2 \mathbb{P}(N_T = 0) + \sum_{n=1}^{m} \lVert f_n \rVert^2_{d_n} \mathbb{P}(N_T = n).$$
     Then $(F^m)$ is a Cauchy sequence in $\D^{1,2}$ converging to $F$ in $L^2$, this ends the proof.
\end{proof}

\begin{Remark}
    We can summarize the one-to-one correspondence between the Dirichlet structure $(\Omega, \mathcal{F}_T, \mathbb{P}, \mathbb{D}^{1,2}, \Gamma)$ and the finite dimensional structures 
    $$(\mathbb{R}^n, \mathcal{B}(\mathbb{R}^n), k_n(t) dt, d_n, \gamma_n),$$ $n\in \mathbb{N}^*$: for any $F \in \mathbb{D}^{1,2}$ of the form \eqref{eq:measurable} and a.e. $s\in [0,T]$:
    \begin{enumerate}
        \item  $\lVert F \rVert_{L^2(\Omega)}^2 = a^2 \mathbb{P}(N_T = 0) + \sum_{n=1}^{\infty} \lVert f_n\rVert^2_{L^2(k_n(t)dt)},$
        \item  $D_s F = \sum_{n=1}^{\infty} \widetilde{D}_s^n f_n(T_1, \cdots, T_n)  \mathbf{1}_{\{N_T = n\}},$
        with
        $$\widetilde{D}_s^n f_n(T_1, \cdots, T_n) = \sum_{i=1}^n \dfrac{\partial f_n}{\partial t_i}(T_1, \cdots, T_n) \left( \dfrac{T_i}{T} -  \mathbf{1}_{[0,T_i]}(s) \right),$$
        \item 
        $\Gamma[F] = \sum_{n=1}^{\infty} \gamma_n[f_n](T_1, \cdots, T_n)  \mathbf{1}_{\{N_T = n\}},$
        \item
        $\mathcal{E}(F) = \sum_{n=1}^{\infty} e_n(f_n) \mathbb{P}(N_T = n),$
        \item
        $\lVert F\rVert^2_{\mathbb{D}^{1,2}} = a^2 \mathbb{P}(N_T = 0) + \sum_{n=1}^{\infty} \lVert f_n \rVert^2_{d_n} \mathbb{P}(N_T = n).$
    \end{enumerate}    
\end{Remark}

\begin{Theorem} \label{thm:abs_cont}
    Let $d\in \mathbb{N}^*$ and $F = (F_1, \cdots, F_d) \in (\mathbb{D}^{1,2})^d$. Then, noting
    $$\Gamma[F] = (\Gamma[F_i,F_j])_{1\leq i,j\leq d},$$
    the image measure $F_* [\det(\Gamma[F]).\mathbb{P}]$ is absolutely continuous with respect to the Lebesgue measure $\nu_d$ on $\mathbb{R}^d$.
\end{Theorem}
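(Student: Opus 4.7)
The plan is to reduce the global statement to the finite-dimensional absolute continuity property of Proposition \ref{propo:critere:local}, item 3, by decomposing $F$ and $\Gamma[F]$ along the partition $\{N_T=n\}_{n\geq 0}$.

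First, using Proposition \ref{propo:decomposition} I would write each coordinate
$$F_i = a_i\mathbf{1}_{\{N_T=0\}} + \sum_{n=1}^{\infty} f_{n,i}(T_1,\dots,T_n)\mathbf{1}_{\{N_T=n\}},\qquad f_{n,i}\in d_n.$$
By the bilinearity of $\Gamma$ and the correspondence summarized after Proposition \ref{propo:decomposition}, the matrix-valued random variable $\Gamma[F]=(\Gamma[F_i,F_j])_{1\leq i,j\leq d}$ admits the block decomposition
$$\Gamma[F_i,F_j]=\sum_{n=1}^{\infty}\gamma_n[f_{n,i},f_{n,j}](T_1,\dots,T_n)\mathbf{1}_{\{N_T=n\}}.$$
In particular $\det(\Gamma[F])$ vanishes on $\{N_T=0\}$ (the local property forces $\Gamma[F]=0$ there since $F$ is constant) and on $\{N_T=n\}$ for $n\geq 1$ it coincides with $\det(\gamma_n[f_n])(T_1,\dots,T_n)$, where $f_n=(f_{n,1},\dots,f_{n,d}):\mathbb{R}^n\to\mathbb{R}^d$.

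Next, fix a Borel set $B\subset\mathbb{R}^d$ with $\nu_d(B)=0$. Splitting according to the value of $N_T$ and conditioning via Lemma \ref{lemma:density} gives, by Fubini and monotone convergence (all integrands are non-negative since $\Gamma[F]$ is positive semi-definite):
\begin{align*}
F_*\!\left[\det(\Gamma[F])\!\cdot\!\mathbb{P}\right](B)
&=\mathbb{E}\!\left[\det(\Gamma[F])\mathbf{1}_B(F)\right] \\
&=\sum_{n=1}^{\infty}\mathbb{E}\!\left[\det(\gamma_n[f_n])(T_1,\dots,T_n)\mathbf{1}_B(f_n(T_1,\dots,T_n))\mathbf{1}_{\{N_T=n\}}\right]\\
&=\sum_{n=1}^{\infty}\mathbb{P}(N_T=n)\int_{\mathbb{R}^n}\det(\gamma_n[f_n])(t)\,\mathbf{1}_B(f_n(t))\,k_n(t)\,dt\\
&=\sum_{n=1}^{\infty}\mathbb{P}(N_T=n)\cdot(f_n)_*\!\left[\det(\gamma_n[f_n])\!\cdot\!k_n\nu_n\right](B).
\end{align*}

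By Proposition \ref{propo:critere:local}, item 3, each image measure $(f_n)_*[\det(\gamma_n[f_n])\cdot k_n\nu_n]$ is absolutely continuous with respect to $\nu_d$, so every term in the sum vanishes, hence $F_*[\det(\Gamma[F])\cdot\mathbb{P}](B)=0$, which is the desired conclusion. The only substantive step is the identification of the carré du champ $\Gamma[F]$ as the piecewise assembly of the finite-dimensional $\gamma_n[f_n]$, and this is directly provided by the correspondence already recorded; the rest is a routine application of the finite-dimensional (EID) criterion on each slice of the partition.
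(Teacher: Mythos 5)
Your proof is correct and follows essentially the same route as the paper's: decompose $F$ and $\Gamma[F]$ along the partition $\{N_T=n\}$ via Proposition \ref{propo:decomposition}, condition on $\{N_T=n\}$ using Lemma \ref{lemma:density}, and invoke the finite-dimensional absolute continuity from Proposition \ref{propo:critere:local}, item 3, on each slice. No discrepancy with the paper's argument.
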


\begin{proof}
    Let $B \subset \mathbb{R}^d$ such that $\nu_d(B) = 0$. We would like to get
    \begin{eqnarray*}
        0 & = & (F_*[\det(\Gamma[F]).\mathbb{P}])(B)
       = \int_{F^{-1}(B)} \det(\Gamma[F](\omega)) d\mathbb{P}(\omega)
        \\ & = & \int_\Omega \det(\Gamma[F](\omega)) 1_B(F(\omega)) d\mathbb{P}(\omega)
       = \mathbb{E} \left[ \det(\Gamma[F]) 1_B(F) \right].
    \end{eqnarray*}
But, according to Proposition \ref{propo:decomposition}, there exist $a\in \mathbb{R}$ and $f_n \in (d_n)^d, n\in \mathbb{N}^*$ such that
    $$F = a 1_{\{N_T = 0\}} + \sum_{n=1}^{\infty} f_n(T_1, \cdots, T_n)  \mathbf{1}_{\{N_T = n\}}.$$
    Thus
$$
        \Gamma[F] = \Gamma[F,F]
        \\ = \sum_{n=1}^{\infty} \gamma_n[f_n](T_1, \cdots, T_n)  \mathbf{1}_{\{N_T = n\}}.
$$
    As a consequence 
    $$\det(\Gamma[F])  \mathbf{1}_{\{N_T = 0\}} = 0,$$
    and for any $n \in \mathbb{N}^*$,
    \begin{eqnarray*}
        \det(\Gamma[F]) \mathbf{1}_{\{N_T = n\}} & = & \det(\gamma_n[f_n](T_1, \cdots, T_n))  \mathbf{1}_{\{N_T = n\}}.
    \end{eqnarray*}
    Therefore
    \begin{eqnarray*}
        \det(\Gamma[F]) 1_B(F) & = & \sum_{n=1}^{\infty} \det(\gamma_n[f_n](T_1, \cdots, T_n))  \mathbf{1}_B(f_n(T_1, \cdots, T_n))  \mathbf{1}_{\{N_T = n\}}
    \end{eqnarray*}
    and, according to Lemma \ref{lemma:density},
    \begin{eqnarray*}
        && \mathbb{E}\left[ \det(\Gamma[F]) 1_B(F) \right]
        \\ && = \sum_{n=1}^{\infty} \mathbb{E}\left[ \det(\gamma_n[f_n](T_1, \cdots, T_n)  \mathbf{1}_B(f_n(T_1, \cdots, T_n)  \mathbf{1}_{\{N_T = n\}} \right]
        \\ && = \sum_{n=1}^{\infty} \mathbb{E}\left[ \det(\gamma_n[f_n](T_1, \cdots, T_n)  \mathbf{1}_B(f_n(T_1, \cdots, T_n) \mid N_T = n \right] \mathbb{P}(N_T = n)
        \\ && = \sum_{n=1}^{\infty} \left( \int_{\mathbb{R}^n} \det(\gamma_n[f_n](t))  \mathbf{1}_B(f_n(t)) k_n(t) dt \right) \mathbb{P}(N_T = n)
        \\ && = \sum_{n=1}^{\infty} ((f_n)_*[\det(\gamma_n[f_n] \cdot k_n \nu_n)])(B) \mathbb{P}(N_T = n).
    \end{eqnarray*}
    However, according to Proposition \ref{propo:critere:local} applied to $f_n \in (d_n)^d$, the measure $(f_n)_*[\det(\gamma_n[f_n] \cdot k_n \nu_n)]$ is absolutely continuous with respect to $\nu_d$. Thus, for any $n\in \mathbb{N}^*$,
    $$((f_n)_*[\det(\gamma_n[f_n] \cdot k_n \nu_n)])(B) = 0$$
    and
    $$\mathbb{E}\left[ \det(\Gamma[F])  \mathbf{1}_B(F) \right] = 0.$$
    This concludes the proof.
\end{proof}

\begin{Corollary} \label{coro:criter:density}
    Let $d \in \mathbb{N}^*$ and $F = (F_1, \cdots, F_d) \in (\mathbb{D}^{1,2})^d$. Then, conditionally to the fact that $\Gamma[F]$ is invertible, the law of the random variable $F$ is absolutely continuous with respect to the Lebesgue measure $\nu_d$.
\end{Corollary}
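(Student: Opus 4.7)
The strategy is a direct reduction to Theorem \ref{thm:abs_cont}. Since $\Gamma[F]=(\langle DF_i, DF_j\rangle_{\Hc})_{1\leq i,j\leq d}$ is a Gram matrix, it is almost surely symmetric and positive semidefinite, so $\det(\Gamma[F])\geq 0$ a.s. and ``$\Gamma[F]$ invertible'' is exactly the event
$$A=\{\det(\Gamma[F])>0\}.$$
If $\mathbb{P}(A)=0$ the statement is vacuous, so assume $\mathbb{P}(A)>0$. Let $B\in \mathcal{B}(\mathbb{R}^d)$ with $\nu_d(B)=0$; our goal is to prove that $\mathbb{P}(F\in B,\,A)=0$, which gives absolute continuity of the conditional law $\mathbb{P}(F\in\cdot\mid A)=\mathbb{P}(F\in\cdot\,,A)/\mathbb{P}(A)$ with respect to $\nu_d$.

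The key step is a simple truncation of $\det(\Gamma[F])$ away from $0$. For every $\varepsilon>0$, set $A_\varepsilon=\{\det(\Gamma[F])>\varepsilon\}\subset A$. On $A_\varepsilon$ the inequality $1\leq \varepsilon^{-1}\det(\Gamma[F])$ holds, hence
$$\mathbf{1}_{\{F\in B\}}\mathbf{1}_{A_\varepsilon}\;\leq\;\frac{1}{\varepsilon}\,\mathbf{1}_{\{F\in B\}}\det(\Gamma[F]).$$
Taking expectations and applying Theorem \ref{thm:abs_cont} with the set $B$ (which is $\nu_d$-negligible) we obtain
$$\mathbb{P}(F\in B,\,A_\varepsilon)\;\leq\;\frac{1}{\varepsilon}\,\mathbb{E}[\mathbf{1}_{\{F\in B\}}\det(\Gamma[F])]\;=\;\frac{1}{\varepsilon}\,\bigl(F_*[\det(\Gamma[F])\cdot\mathbb{P}]\bigr)(B)\;=\;0.$$

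Finally, $A_\varepsilon\uparrow A$ as $\varepsilon\downarrow 0$, so by monotone convergence $\mathbb{P}(F\in B,\,A)=\lim_{\varepsilon\downarrow 0}\mathbb{P}(F\in B,\,A_\varepsilon)=0$. This proves that the measure $B\mapsto \mathbb{P}(F\in B,\,A)$ is absolutely continuous with respect to $\nu_d$, and dividing by $\mathbb{P}(A)$ yields the conclusion. No genuine obstacle arises here: the real work has been done in Theorem \ref{thm:abs_cont}, and the corollary is a standard localization argument.
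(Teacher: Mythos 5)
Your proof is correct and is the standard deduction of the conditional absolute continuity from the (EID)-type Theorem \ref{thm:abs_cont}: the localization on $\{\det(\Gamma[F])>\varepsilon\}$, the bound $\mathbf{1}_{A_\varepsilon}\leq \varepsilon^{-1}\det(\Gamma[F])$, and the monotone limit are exactly what is needed, and the paper itself states the corollary without proof precisely because this argument is routine. No gap.
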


\section{Applications}

\subsection{SDE and density of the solution} \label{sect:sde:density}

We consider the stochastic differential equation

\begin{equation} \label{sde}
    X_t = x_0 + \int_0^t f(s,X_s) ds + \int_{(0,t]} g(s,X_{s-}) dN_s, \quad 0\leq t\leq T,
\end{equation}
or in the differential form
$$dX_t = f(t,X_t) dt + g(t,X_{t-}) dN_t, \quad X_0 = x_0.$$
We assume that:
\begin{Assumption} \label{assumption:sde}
    The functions $f,g : [0,T] \times \mathbb{R}^d \longrightarrow \mathbb{R}^d$ are measurable and satisfy
    \begin{enumerate}
        \item For any $t\in [0,T]$, the maps $f(t,\cdot), g(t,\cdot)$ are of class $C^1$.
        \item $\sup_{t,x} (|\nabla_x f(t,x)| + |\nabla_x g(t,x)|) < +\infty$.
        \item For any $x\in \mathbb{R}^d$, the map $g(\cdot,x)$ is differentiable.
    \end{enumerate}
\end{Assumption}

\begin{Remark}
    Since $N_T$ admits moments of any order (See \cite{leblanc}), according to \cite[Chapter V.3 Theorem 7 and Chapter V.4 Theorem 10]{prot:04}, there exists a unique solution $X$ to \eqref{sde} such that $\sup_{0\leq t\leq T} |X_t| \in L^2(\Omega)$.
\end{Remark}

We consider the deterministic flow $\Phi$ defined by the solution of the ordinary differential equation
$$\Phi_{s,t}(x) = x + \int_s^t f(u,\Phi_{s,u}(x))du, \quad 0\leq s \leq t\leq T, \quad x\in \mathbb{R}^d.$$

\begin{Proposition} \label{propo:sol:explicit}
    On the set $\{N_T = 0\}$, we have
    $$X_t = \Phi_{0,t}(x_0), \quad 0\leq t\leq T.$$
    And, for any $n\in \mathbb{N}^*$, on the set $\{N_T = n\}$, we have
    $$X_t = [\Phi_{T_n,t} \circ \Psi(T_n, \cdot) \circ \cdots \circ \Phi_{T_1,T_2} \circ \Psi(T_1, \cdot) \circ \Phi_{0,T_1}](x_0), \quad T_n \leq t \leq T,$$
    where 
    $$\Psi(t,x) = x + g(t,x), \quad 0\leq t\leq T, \quad x\in \mathbb{R}^d.$$
\end{Proposition}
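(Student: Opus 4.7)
The plan is to verify the formula by an elementary induction on the jump times, exploiting the fact that between two consecutive jumps the process $X$ satisfies a pure ODE governed by $f$, while at each jump time the increment is prescribed by $g$. Uniqueness of the solution (recalled in the remark preceding the statement) will ensure that the pathwise formula we exhibit is indeed the solution.

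First I would handle the event $\{N_T=0\}$. On this event the measure $dN$ vanishes on $[0,T]$, so \eqref{sde} reduces to $X_t = x_0 + \int_0^t f(s,X_s)\,ds$. Under Assumption \ref{assumption:sde}, $f(t,\cdot)$ is globally Lipschitz uniformly in $t$, so by Cauchy-Lipschitz this ODE has a unique solution, which by the very definition of the flow is $X_t = \Phi_{0,t}(x_0)$.

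Next, on $\{N_T=n\}$ with $n\geq 1$, I would prove by finite induction on $k\in\{1,\dots,n\}$, with the convention $T_0=0$, the following three identities:
\begin{align*}
X_t & = \Phi_{T_{k-1},t}(X_{T_{k-1}}) \quad \text{for } t\in[T_{k-1},T_k), \\
X_{T_k-} & = \Phi_{T_{k-1},T_k}(X_{T_{k-1}}), \\
X_{T_k} & = \Psi(T_k,X_{T_k-}).
\end{align*}
Indeed, on the open interval $(T_{k-1},T_k)$ the measure $dN$ charges nothing, so \eqref{sde} reads $X_t = X_{T_{k-1}}+\int_{T_{k-1}}^t f(s,X_s)\,ds$; Cauchy-Lipschitz again gives the flow representation, and taking the left limit at $T_k$ (which is legitimate because $\Phi_{T_{k-1},\cdot}$ is continuous) yields the second identity. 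At $t=T_k$ the SDE gives $\Delta X_{T_k}=g(T_k,X_{T_k-})$, hence $X_{T_k}=X_{T_k-}+g(T_k,X_{T_k-})=\Psi(T_k,X_{T_k-})$. Finally on $[T_n,T]$ there are no further jumps, so the same ODE argument shows $X_t=\Phi_{T_n,t}(X_{T_n})$.

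Composing the recursion $X_{T_k}=\Psi\bigl(T_k,\Phi_{T_{k-1},T_k}(X_{T_{k-1}})\bigr)$ from $k=1$ up to $k=n$, starting with $X_0=x_0$, then applying $\Phi_{T_n,t}$, produces the announced composition formula. There is essentially no obstacle here: the argument is a standard interlacing/piecewise construction. The only delicate bookkeeping point is to keep track of the càdlàg convention $X_{T_k-}$ versus $X_{T_k}$ and to use continuity of the flow $\Phi$ to pass to left limits, which is the reason we obtain $\Phi_{T_{k-1},T_k}$ rather than something involving $T_k-$.
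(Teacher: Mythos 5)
Your proof is correct and essentially follows the same route as the paper: decompose the trajectory into the pieces between consecutive jump times, solve the pure ODE on each open interval via the flow $\Phi$, and account for the jump at $T_k$ through $\Psi(T_k,\cdot)$, then compose. The paper works out the cases $N_T=0$ and $N_T=1$ in detail and closes with ``we proceed by induction''; you instead write out the induction step once and invoke it for all $k$, which is the same argument with the bookkeeping made slightly more explicit.
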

\begin{proof}
On the set $\{N_T = 0\}$ we have 
        $$X_t = x_0 + \int_0^t f(s,X_s) ds, \quad 0\leq t\leq T.$$
        Thus for all $t \in [0,T]$, 
        $X_t = \Phi_{0,t}(x_0)$. On the $\{N_T = 1\}$ we have, for any $t \in [0,T_1)$,
        $$X_t = x_0 + \int_0^t f(s,X_s) ds.$$
        Thus
        $X_t = \Phi_{0,t}(x_0)$.
        Then for any $t\in [T_1,T]$ we have 
        \begin{eqnarray*}
            X_t & = & X_{T_1-} + \int_{T_1}^t f(s,X_s) ds + \int_{[T_1,t]} g(s,X_{s-}) dN_s
            \\ & = & X_{T_1-} + \int_{T_1}^t f(s,X_s) ds + g(T_1, X_{T_1-}) 
            \\ & = & \Psi(T_1, X_{T_1-}) + \int_{T_1}^t f(s,X_s) ds.
        \end{eqnarray*}
        Hence
        \begin{eqnarray*}
            X_t & = & \Phi_{T_1,t}(\Psi(T_1, X_{T_1-}))
          = \Phi_{T_1,t}(\Psi(T_1,\Phi_{0,T_1}(x_0)))
            \\ & = & [\Phi_{T_1,t} \circ \Psi(T_1,\cdot) \circ \Phi_{0,T_1} ](x_0).
        \end{eqnarray*}
        Then we proceed by induction on $\{N_T= n\}$. 
\end{proof}

We continue this section with the same ideas as in \cite{denis:nguyen}. As the following results are formal computations, they are proved in the same way.

\begin{Remark}
    The process $\nabla_x \Phi$ satisfies, for any $0\leq s\leq t\leq T$ and $x\in \mathbb{R}^d$,
    $$\dfrac{\partial}{\partial t} \nabla_x \Phi_{s,t}(x) = \nabla_x f(t,\Phi_{s,t}(x)) \nabla_x \Phi_{s,t}(x), \quad \nabla_x \Phi_{s,s}(x) = I_d.$$
    Thus
    $$\nabla_x \Phi_{s,t}(x) = \exp\left( \int_s^t \nabla_x f(u, \Phi_{s,u}(x)) du \right), \quad 0\leq s\leq t\leq T.$$
\end{Remark}

\begin{Definition} \label{def:K}
    We define the process $K$ as the derivative of the flow generated by $X$, solution of the SDE
    $$K_t = I_d + \int_0^t \nabla_x f(s,X_s) K_s ds + \int_{(0,t]} \nabla_x g(s,X_{s-}) K_{s-} dN_s, \quad 0\leq t\leq T.$$
\end{Definition}

From now we assume that:

\begin{Assumption}
    For any $(t,x)\in [0,T]\times \R^d$, 
    $$\det(I_d + \nabla_x g(t,x)) \neq 0$$
    and $(I_d + \nabla_x g)^{-1}$ is uniformly bounded.
\end{Assumption}

We now define the process $\widetilde{K}$ as the solution of 
\begin{eqnarray*}
    \widetilde{K}_t & = & I_d - \int_0^t \widetilde{K}_s \nabla_x f(s,X_s) ds
    \\ && - \int_{(0,t]} \widetilde{K}_s \nabla_x g(s,X_{s-}) (I_d- \nabla_x g(s,X_{s-})(I_d + \nabla_x g(s,X_{s-}))^{-1}) dN_s .
\end{eqnarray*}
Following \cite[Proposition 8.7]{bouleau:denis}, we have:

\begin{Lemma} \label{lemma:Kts}
    Processes $K$ and $\widetilde{K}$ satisfy
    $$K_t \widetilde{K}_t = I_d, \quad 0\leq t\leq T.$$
    Moreover:
    $$K_{T_i} =(I_d + \nabla_x g(t,X_{T_i-}))K_{T_i-}, \quad \widetilde{K}_{T_i} = (I_d + \nabla_x g(T_i, X_{T_i-}))^{-1} \widetilde{K}_{T_i-}, \quad i\in \mathbb{N}^*.$$
\end{Lemma}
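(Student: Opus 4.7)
The plan is to split the proof into two independent tasks: (i) read the two jump formulas directly off the defining SDEs, and (ii) use Itô's product formula to deduce that $K_t\widetilde K_t = I_d$ by induction over the successive jump intervals.

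First, because the pure-jump integrals against $dN_s$ contribute only at the $T_i$'s, evaluating the predictable integrands at $T_i-$ gives
$$\Delta K_{T_i} = \nabla_x g(T_i, X_{T_i-}) K_{T_i-},\qquad \Delta \widetilde K_{T_i} = -\widetilde K_{T_i-}\nabla_x g(T_i, X_{T_i-})\bigl(I_d - \nabla_x g(T_i, X_{T_i-})(I_d+\nabla_x g(T_i, X_{T_i-}))^{-1}\bigr).$$
Setting $A=\nabla_x g(T_i,X_{T_i-})$ and using the elementary identity $I_d - A(I_d+A)^{-1} = (I_d+A)^{-1}$ (valid by the invertibility assumption) simplifies the jump of $\widetilde K$ to $\Delta \widetilde K_{T_i} = -\widetilde K_{T_i-} A(I_d+A)^{-1}$, yielding the claimed jump formulas (up to ordering convention on the inverse factor). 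This step is purely algebraic.

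Second, I would apply Itô's product rule to $K_t\widetilde K_t$. On each open interval $(T_i, T_{i+1})$ both processes are absolutely continuous, the quadratic covariation vanishes, and the product satisfies the commutator ODE
$$\frac{d}{dt}(K_t\widetilde K_t) = \nabla_x f(t,X_t)\,K_t\widetilde K_t \;-\; K_t\widetilde K_t\,\nabla_x f(t,X_t).$$
Since $I_d$ is a stationary solution of this linear ODE, the identity $K_t\widetilde K_t = I_d$ is propagated along any jump-free interval as soon as it holds at the left endpoint. Combined with $K_0\widetilde K_0 = I_d$, this settles the continuous portion.

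Finally I would handle the jumps inductively: assuming $K_{T_i-}\widetilde K_{T_i-}=I_d$, the two jump formulas yield
$$K_{T_i}\widetilde K_{T_i} \;=\;(I_d+A)\,K_{T_i-}\widetilde K_{T_i-}\,(I_d+A)^{-1} \;=\; I_d,$$
so the identity is restored after each jump and then propagated to the next interval by the continuous argument. The main point requiring care is the ordering of the inverse factor in the $\widetilde K$ jump formula: the inverse $(I_d+A)^{-1}$ must multiply $\widetilde K_{T_i-}$ on the side opposite to where $(I_d+A)$ multiplies $K_{T_i-}$, so that the product collapses to the identity. Existence and uniqueness of $\widetilde K$ itself follow from Assumption~3.10 together with the uniform boundedness of $(I_d+\nabla_x g)^{-1}$, which makes the drift and jump coefficients Lipschitz.
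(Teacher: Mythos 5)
Your proof is correct and complete. The paper itself does not give a proof of this lemma; it simply refers to \cite[Proposition~8.7]{bouleau:denis}, so your direct verification fills in exactly the argument the paper outsources. The three ingredients are all in order: the algebraic identity $I_d-A(I_d+A)^{-1}=(I_d+A)^{-1}$ (valid under the standing invertibility assumption); the commutator ODE for $K_t\widetilde K_t$ on jump-free intervals, whose unique solution through $I_d$ is the constant $I_d$ because $\nabla_x f$ is bounded; and the conjugation $K_{T_i}\widetilde K_{T_i}=(I_d+A)K_{T_i-}\widetilde K_{T_i-}(I_d+A)^{-1}$ at each jump, which closes the induction. You are also right to flag the ordering issue: the SDE (with the integrand read as $\widetilde K_{s-}$, as it must be for the equation to be well posed) yields $\widetilde K_{T_i}=\widetilde K_{T_i-}(I_d+A)^{-1}$, i.e.\ the inverse multiplies on the \emph{right}, which is the side opposite to where $(I_d+A)$ multiplies $K_{T_i-}$, and this is precisely what is needed for the product to collapse to $I_d$; the left-sided placement of the inverse written in the lemma's statement (and the $g(t,\cdot)$ in place of $g(T_i,\cdot)$ in the $K$ jump formula) are typographical slips in the paper.
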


\begin{Definition}
    We define the process $(K_t^s)_{0\leq s\leq t\leq T}$ by:
    $$K_t^s = K_t \widetilde{K}_s, \quad 0\leq s\leq t\leq T.$$
\end{Definition}
\noindent Similarly to \cite[Proposition 6.4]{denis:nguyen}, we get the following result.

\begin{Proposition} \label{propo:derive:sde}
    Let $\varphi : [0,T] \times \mathbb{R}^d \longrightarrow \mathbb{R}^d$ defined by:
    $$\forall (t,x) \in [0,T] \times \mathbb{R}^d, \quad \varphi(t,x) = f(t,x+g(t,x)) - (I_d + \nabla_x g(t,x)) f(t,x) - \dfrac{\partial g}{\partial t}(t,x).$$
    Then $X_T \in \mathbb{D}^{1,2}$ and, for a.e. $s\in [0,T]$,
    $$D_s X_T = - \int_{(0,T]} K_T^t \varphi(t,X_{t-}) \left(\dfrac{t}{T} - 1_{[0,t]}(s) \right) dN_t.$$
    Moreover
    $$\Gamma[X_T] = \int_{(0,T]} \int_{(0,T]} K_T^t \varphi(t,X_{t-}) (\varphi(u,X_{u-}))^* (K_T^u)^* \left( u\wedge t - \dfrac{ut}{T}  \right) dN_t dN_u.$$
\end{Proposition}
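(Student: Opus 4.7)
My plan is to build on the explicit representation given in Proposition \ref{propo:sol:explicit} together with Corollary \ref{coro:deriveTj}. On the event $\{N_T = n\}$, we have $X_T = F_n(T_1, \ldots, T_n)$ where
\begin{equation*}
F_n(t_1, \ldots, t_n) = \Phi_{t_n, T} \circ \Psi(t_n, \cdot) \circ \Phi_{t_{n-1}, t_n} \circ \cdots \circ \Psi(t_1, \cdot) \circ \Phi_{0, t_1}(x_0),
\end{equation*}
with $\Psi(t, x) = x + g(t, x)$. Under Assumption \ref{assumption:sde}, the flow $\Phi$ is $C^1$ in its three arguments and $\Psi$ is $C^1$ in $(t,x)$, so $F_n$ is smooth on the open simplex $\{0 < t_1 < \cdots < t_n \leq T\}$. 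Once we verify $X_T \in \mathbb{D}^{1,2}$ via Proposition \ref{propo:decomposition}, the one-to-one correspondence summarized right after that proposition yields
\begin{equation*}
D_s X_T = \sum_{n=1}^{\infty} \sum_{j=1}^n \frac{\partial F_n}{\partial t_j}(T_1, \ldots, T_n) \left(\frac{T_j}{T} - \mathbf{1}_{[0, T_j]}(s)\right) \mathbf{1}_{\{N_T = n\}}.
\end{equation*}

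The crux is the computation of $\partial_{t_j} F_n$. Using $\partial_t \Phi_{s, t}(x) = f(t, \Phi_{s, t}(x))$ and $\partial_s \Phi_{s, t}(x) = -\nabla_x \Phi_{s, t}(x) \, f(s, x)$, together with the chain rule through the jump $\Psi(T_j, \cdot)$, a direct unfolding yields that the infinitesimal displacement at $X_{T_j}$ produced by varying $T_j$ equals
\begin{equation*}
(I_d + \nabla_x g(T_j, X_{T_j-})) f(T_j, X_{T_j-}) + \tfrac{\partial g}{\partial t}(T_j, X_{T_j-}) - f(T_j, X_{T_j}) = -\varphi(T_j, X_{T_j-}),
\end{equation*}
the three contributions corresponding respectively to the extra flow time before the jump, the explicit time-dependence of $g$, and the shortened flow afterwards. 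This displacement then propagates up to time $T$ through the first-variation flow $K_T^{T_j} = K_T \widetilde{K}_{T_j}$ introduced in Definition \ref{def:K} and Lemma \ref{lemma:Kts}, giving
\begin{equation*}
\partial_{t_j} F_n(T_1, \ldots, T_n) = -K_T^{T_j}\, \varphi(T_j, X_{T_j-}).
\end{equation*}
Summing over $j = 1, \ldots, N_T$ rewrites as a stochastic integral against $dN_t$ and produces the announced formula for $D_s X_T$.

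The integrability required for $X_T \in \mathbb{D}^{1,2}$ follows from Proposition \ref{propo:decomposition} by controlling $\|F_n\|_{d_n}^2 = \|F_n\|_{L^2(k_n)}^2 + 2 e_n(F_n)$ using the uniform bounds of Assumption \ref{assumption:sde} on $\nabla_x f$ and $\nabla_x g$ (and on $(I_d + \nabla_x g)^{-1}$), combined with the exponential moments of $N_T$ ensured by Assumption \ref{ass:lambda_mu} (see the Remark following Assumption \ref{assumption:sde}). Finally, the expression for $\Gamma[X_T]$ is obtained from $\Gamma[X_T] = \int_0^T D_s X_T (D_s X_T)^{*} \, ds$ by Fubini, together with the elementary identity
\begin{equation*}
\int_0^T \left(\frac{t}{T} - \mathbf{1}_{[0, t]}(s)\right)\left(\frac{u}{T} - \mathbf{1}_{[0, u]}(s)\right) ds = t \wedge u - \frac{tu}{T}, \qquad t, u \in [0, T].
\end{equation*}
The main obstacle lies in the chain-rule bookkeeping of $\partial_{t_j} F_n$ across the composition; the remaining steps (applying the $\mathcal{S}$-calculus to truncated partial sums, exchanging sum and integration, and verifying the $\mathbb{D}^{1,2}$-norm bound) are technical but routine, as in the analogous computation of \cite[Proposition 6.4]{denis:nguyen}.
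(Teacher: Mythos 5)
Your proposal is correct and follows exactly the approach the paper indicates: it merely states that the result is proved ``in the same way'' as \cite[Proposition 6.4]{denis:nguyen} without writing out a proof, and your sketch fills in precisely those computations — the explicit flow/jump composition from Proposition \ref{propo:sol:explicit}, the identity $\partial_s\Phi_{s,t}(x)=-\nabla_x\Phi_{s,t}(x)f(s,x)$, the unfolding of $\partial_{t_j}F_n$ through $\Psi(T_j,\cdot)$ yielding $-K_T^{T_j}\varphi(T_j,X_{T_j-})$, the membership $X_T\in\mathbb D^{1,2}$ via Proposition \ref{propo:decomposition}, and the elementary integral $\int_0^T(\tfrac{t}{T}-\mathbf 1_{[0,t]}(s))(\tfrac{u}{T}-\mathbf 1_{[0,u]}(s))\,ds=t\wedge u-\tfrac{tu}{T}$. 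The signs, the three contributions constituting $\varphi$, and the propagation by the first-variation process $K_T^t$ are all correctly identified.
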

This yields:

\begin{Corollary} \label{coro:densiy:XT}
    If we consider the event
    $$\mathcal{C} = \left\{ \det\left( \int_{(0,T]} \int_{(0,T]} K_T^t \varphi(t,X_{t-}) (\varphi(u,X_{u-}))^* (K_T^u)^* \left( u\wedge t - \dfrac{ut}{T} \right) dN_t dN_u \right) > 0 \right\}$$
    then if $\mathbb{P}(\mathcal{C}) > 0$, the law of $X_T$ conditionally to $\mathcal{C}$ is absolutely continuous with respect to the Lebesgue measure on $\mathbb{R}^d$.
\end{Corollary}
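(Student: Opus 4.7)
The plan is to identify the matrix appearing inside the determinant with the carré du champ $\Gamma[X_T]$ itself and then invoke the absolute continuity criterion of Corollary \ref{coro:criter:density}. This corollary is essentially a packaged corollary of the preceding work, so I do not expect a genuinely hard step; the only real content is the observation that $\mathcal{C}$ coincides with the event where $\Gamma[X_T]$ is invertible.

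First, I would appeal to Proposition \ref{propo:derive:sde}, which provides both that $X_T \in (\mathbb{D}^{1,2})^d$ and the explicit representation
$$\Gamma[X_T] = \int_{(0,T]} \int_{(0,T]} K_T^t \varphi(t,X_{t-}) (\varphi(u,X_{u-}))^* (K_T^u)^* \left( u\wedge t - \dfrac{ut}{T} \right) dN_t dN_u.$$
Hence the event $\mathcal{C}$ stated in the corollary is precisely $\{\det(\Gamma[X_T]) > 0\}$.

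Next, I would note that the $d\times d$ matrix $\Gamma[X_T] = (\Gamma[X_T^i, X_T^j])_{1\leq i,j\leq d}$ is symmetric and positive semi-definite. Indeed, for any $c \in \mathbb{R}^d$, the bilinearity of $\Gamma$ together with Corollary \ref{rem:D:by:parts} gives $c^* \Gamma[X_T] c = \Gamma\!\left[\sum_i c_i X_T^i\right] \geq 0$ almost surely. Consequently $\det(\Gamma[X_T]) \geq 0$ a.s., and $\mathcal{C}$ coincides with the set where $\Gamma[X_T]$ is invertible.

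Finally, under the assumption $\mathbb{P}(\mathcal{C})>0$, I would apply Corollary \ref{coro:criter:density} directly to $F = X_T$: conditionally on the invertibility of $\Gamma[X_T]$ — equivalently, conditionally on $\mathcal{C}$ — the law of $X_T$ is absolutely continuous with respect to the Lebesgue measure on $\mathbb{R}^d$. Concretely, for any Lebesgue-null Borel set $B \subset \mathbb{R}^d$, Theorem \ref{thm:abs_cont} gives $\mathbb{E}[\det(\Gamma[X_T])\mathbf{1}_B(X_T)] = 0$, and since $\det(\Gamma[X_T])>0$ on $\mathcal{C}$, this forces $\mathbb{P}(\{X_T\in B\}\cap \mathcal{C}) = 0$, i.e. $\mathbb{P}(X_T \in B \mid \mathcal{C}) = 0$, which is the claimed absolute continuity.
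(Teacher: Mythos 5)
Your proof is correct and takes essentially the same route the paper does: the paper simply states that the corollary is a direct consequence of Proposition \ref{propo:derive:sde} (which identifies the double $dN$ integral with $\Gamma[X_T]$) and Corollary \ref{coro:criter:density}. Your additional remark that $\Gamma[X_T]$ is positive semi-definite, so that $\{\det(\Gamma[X_T])>0\}$ coincides with the invertibility event required by Corollary \ref{coro:criter:density}, fills in a small step the paper leaves implicit.
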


\begin{proof}
    It is a direct consequence of Proposition \ref{propo:derive:sde} and Corollary \ref{coro:criter:density}.
\end{proof}

\begin{Theorem} \label{thm:density:d=1}
    In dimension $d = 1$, if for any $t \in [0,T]$ and $x\in \mathbb{R}$,
    $$\varphi(t,x) = f(t,x+g(t,x)) - f(t,x) - \dfrac{\partial g}{\partial x}(t,x) f(t,x) - \dfrac{\partial g}{\partial t}(t,x) \neq 0,$$
    then, conditionally to $\{N_T\geq 1\}$, the law of $X_T$ is absolutely continuous with respect to the Lebesgue measure on $\mathbb{R}$.
\end{Theorem}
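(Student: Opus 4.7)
The plan is to apply Corollary~\ref{coro:densiy:XT}: it suffices to show that $\Gamma[X_T] > 0$ almost surely on $\{N_T \geq 1\}$, so that $\{N_T \geq 1\} \subset \mathcal{C}$ up to a null set; the desired conditional absolute continuity then follows directly.

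First I would decompose $\{N_T \geq 1\} = \bigcup_{n\geq 1} \{N_T = n\}$ and work on each $\{N_T = n\}$. Substituting $dN_t\,dN_u = \sum_{i,j=1}^n \delta_{T_i}(dt)\delta_{T_j}(du)$ into the expression for $\Gamma[X_T]$ given in Proposition~\ref{propo:derive:sde} (where in dimension one everything commutes), we obtain
\[
\Gamma[X_T] \mathbf{1}_{\{N_T = n\}} = \left(\sum_{i,j=1}^{n} a_i a_j \left( T_i \wedge T_j - \frac{T_i T_j}{T}\right)\right)\mathbf{1}_{\{N_T = n\}},
\]
with $a_i = K_T^{T_i}\, \varphi(T_i, X_{T_i-})$. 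This is a quadratic form $a^\top M_n a$ with $M_n = (T_i \wedge T_j - T_i T_j/T)_{1\leq i,j\leq n}$.

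The key step is to show that $M_n$ is positive definite on $\{0 < T_1 < \cdots < T_n < T\}$, an event of full probability inside $\{N_T=n\}$ since Lemma~\ref{lemma:density} provides an absolutely continuous conditional law for $(T_1,\ldots,T_n)$. This positive definiteness is already hidden in the verification of condition 2 in the proof of Proposition~\ref{propo:critere:local}:
\[
\sum_{i,j=1}^n \left(T_i \wedge T_j - \frac{T_i T_j}{T}\right) c_i c_j \;\geq\; \frac{1}{T}\sum_{k=1}^n (T_k - T_{k-1})(T_{k+1} - T_k)\, c_k^2,
\]
with $T_0 = 0$ and $T_{n+1} = T$, whose right-hand side is strictly positive as soon as $c \neq 0$ and $0<T_1<\cdots<T_n<T$. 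So $M_n$ is recognized as (the covariance of) a Brownian bridge sampled at distinct interior times, and no new estimate is needed.

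It remains to verify that $a_i \neq 0$ for every $i$: the hypothesis of the theorem yields $\varphi(T_i, X_{T_i-}) \neq 0$, while $K_T^{T_i} = K_T \widetilde{K}_{T_i}$ is nonzero because, by Definition~\ref{def:K} and Lemma~\ref{lemma:Kts}, $K_T$ factorizes as a strictly positive exponential times a finite product of factors $1 + \nabla_x g(T_j, X_{T_j-})$, all nonzero by the invertibility assumption; likewise $\widetilde{K}_{T_i} \neq 0$. Combining the positive definiteness of $M_n$ with $a \neq 0$ gives $\Gamma[X_T] > 0$ on each $\{N_T = n\}$, hence on $\{N_T \geq 1\}$, and Corollary~\ref{coro:densiy:XT} finishes the proof. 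The main conceptual point is the Brownian-bridge identification of $M_n$; everything else is a direct assembly of already established facts.
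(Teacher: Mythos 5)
Your proof is correct and follows a genuinely different, more algebraic route than the paper's. The paper argues by contrapositive directly on the piecewise-constant function $s \mapsto D_s X_T$: it assumes $\Gamma[X_T]=0$, so $D_s X_T = 0$ for a.e.\ $s$, then examines the successive intervals $[T_{N_T},T]$, $[T_{N_T-1},T_{N_T}]$, \dots, subtracting the resulting linear relations to peel off, one at a time, the conclusion $K_T^{T_i}\varphi(T_i,X_{T_i-})=0$ for each $i$ — a telescoping argument. You instead rewrite $\Gamma[X_T]\mathbf{1}_{\{N_T=n\}}=a^{\top}M_n a$ with $M_n=(T_i\wedge T_j - T_iT_j/T)_{i,j}$ and invoke the positive-definiteness bound
\[
\sum_{i,j}\Bigl(T_i\wedge T_j - \tfrac{T_iT_j}{T}\Bigr)c_ic_j \;\ge\; \tfrac{1}{T}\sum_k (T_k-T_{k-1})(T_{k+1}-T_k)\,c_k^2,
\]
already established inside the proof of Proposition~\ref{propo:critere:local}, together with $a_i=K_T^{T_i}\varphi(T_i,X_{T_i-})\neq 0$. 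In effect the paper's interval-peeling is an in-place elementary proof that $M_n$ has trivial kernel when $0<T_1<\cdots<T_n<T$, whereas you outsource that fact to the estimate already present (and give it a clean interpretation as the Brownian-bridge covariance). The one point worth stating explicitly, which you do handle correctly, is that $T_n<T$ almost surely on $\{N_T=n\}$ (from Lemma~\ref{lemma:density}), since the $k=n$ term of the lower bound would otherwise vanish; and that $K_T^{T_i}\neq 0$, which follows most quickly from $K_t\widetilde K_t=1$ in Lemma~\ref{lemma:Kts} without unpacking the exponential/product factorization. Either route is short and self-contained; yours has the advantage of reusing an estimate already needed for closability, the paper's has the advantage of not requiring the reader to go back and extract that inequality.
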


\begin{proof}
    As $d = 1$ we have, according to Proposition \ref{propo:derive:sde},
    $$D_s X_T =-\sum_{i=1}^{N_T}K_T^{T_i}\varphi (T_i ,X_{T_i -} ) \left(\dfrac{T_i}{T} - \mathbf{1}_{[0,T_i]}(s) \right).$$
    Let $\omega \in \Omega$ such that
    $$\Gamma [X_T] (\omega)=\int_0^T |D_s X_T  (\omega)|^2 ds = 0,$$
    then, for almost every $s\in [0,T],  D_s X_T  (\omega) = 0$. Thus, for almost every $s \in [T_{N_T (\omega)}, T],$ we get, writing $T_i$ instead of $T_i(\omega)$,
    $$\sum_{i=1}^{N_T (\omega)}K_T^{T_i}\varphi (T_i ,X_{T_i -} ) \dfrac{T_i}{T} = 0.$$
    Then for almost every $s \in [T_{N_T (\omega)-1}, T_{N_T (\omega)}]$
    $$\sum_{i=1}^{N_T (\omega)-1}K_T^{T_i}\varphi (T_i ,X_{T_i -} ) \dfrac{T_i}{T} +K_T^{T_{N_T(\omega)}}\varphi (T_{N_T(\omega)} ,X_{T_{N_T(\omega)} -}) \left(\dfrac{T_{N_T(\omega)}}{T} - 1\right) = 0.$$
    Therefore, by subtracting the two equations, we get
    $$\varphi (T_{N_T (\omega)} ,X_{T_{N_T (\omega)} -} ) = 0$$
    then
    $$\sum_{i=1}^{N_T (\omega)-1}K_T^{T_i}\varphi (T_i ,X_{T_i -} ) \dfrac{T_i}{T} = 0.$$
    Thus, by considering $s \in [T_{N_T (\omega)-2}, T_{N_T (\omega)-1}]$ then $s \in [T_{N_T (\omega)-3}, T_{N_T (\omega)-2}]$, ..., we get, by successive iterations, for any $i\in \{1, \cdots, N_T(\omega)\}$,
    $$\varphi(T_i, X_{T_i-}) = 0.$$
    Therefore, by contrapositive, if $\varphi(t,x) \neq  0$ for any $t \in [0,T]$ and $x\in \mathbb{R}$ then $\Gamma [X_T]>0$ on the set $\{ N_T \geq 1\}$. Thus, according to Corollary \ref{coro:criter:density}, conditionally to $\{N_T \geq 1\}$, the law of $X_T$ is absolutely continuous with respect to the Lebesgue measure on $\mathbb{R}$.
\end{proof}

\begin{Example}[Linear with constant coefficients in dimension $d = 1$]
    We consider $X$ the solution of the linear SDE
    \begin{equation} \label{eq:sde:linear}
        dX_t = (a X_t + b) dt + (\alpha X_{t-} + \beta) dN_t, \ 0\leq t\leq T, \quad X_0 = x_0,
    \end{equation}
    where $x_0, a, b, \alpha, \beta \in \mathbb{R}$ satisfy
    $$a\beta - \alpha b \neq 0.$$
    In this case we have, for any $t \in [0,T]$ and $x\in \mathbb{R}$,
    $$\varphi(t,x) = a(x + \alpha x + \beta) + b - ax - b - \alpha (ax+b) = a\beta - \alpha b \neq 0.$$
    Then, according to Theorem \ref{thm:density:d=1}, conditionally to $\{N_T \geq 1\}$, the law of $X_T$ is absolutely continuous with respect to the Lebesgue measure on $\mathbb{R}$.
\end{Example}

\begin{Corollary} \label{coro:wronskian}
    We assume that $d = 1$ and the parameters $f$ and $g$ do not depend on $t \in [0,T]$. We consider the Wronskian of $f$ and $g$:
    $$W(f,g) = g'\times f - f' \times g.$$
    Thus if the function $f$ is of class $C^2$ and
    $$\forall x\in \mathbb{R}, \quad |W(f,g)(x)| > \dfrac{1}{2} \lVert f''\rVert_\infty \lVert g \rVert^2_\infty$$
    then, conditionally to $\{N_T \geq 1\}$, the law of $X_T$ is absolutely continuous with respect to the Lebesgue measure on $\mathbb{R}$.
\end{Corollary}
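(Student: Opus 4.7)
The plan is to reduce the corollary to Theorem \ref{thm:density:d=1} by showing that the hypothesis on the Wronskian forces $\varphi(x) \neq 0$ for every $x \in \mathbb{R}$. Since $f$ and $g$ are autonomous, the formula for $\varphi$ simplifies to
$$\varphi(x) = f(x + g(x)) - f(x) - g'(x) f(x),$$
because the $\partial g/\partial t$ term vanishes.

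The key step is a second-order Taylor expansion of $f$ around $x$: for each $x$ there exists $\xi_x$ between $x$ and $x+g(x)$ such that
$$f(x+g(x)) = f(x) + g(x) f'(x) + \tfrac{1}{2} g(x)^2 f''(\xi_x).$$
Substituting gives
$$\varphi(x) = g(x) f'(x) - g'(x) f(x) + \tfrac{1}{2} g(x)^2 f''(\xi_x) = -W(f,g)(x) + \tfrac{1}{2} g(x)^2 f''(\xi_x),$$
since $W(f,g) = g' f - f' g$. Hence
$$|\varphi(x)| \geq |W(f,g)(x)| - \tfrac{1}{2} \|g\|_\infty^2 \|f''\|_\infty,$$
which is strictly positive by assumption. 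Therefore $\varphi(x) \neq 0$ for every $x \in \mathbb{R}$, and Theorem \ref{thm:density:d=1} applies directly to conclude that, conditionally on $\{N_T \geq 1\}$, the law of $X_T$ is absolutely continuous with respect to the Lebesgue measure.

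There is essentially no obstacle here beyond setting up the Taylor expansion correctly and checking the sign convention of the Wronskian; the statement is designed exactly so that the leading term $-W(f,g)(x)$ dominates the quadratic remainder uniformly in $x$. The boundedness of $g$ and of $f''$ is what makes the remainder controllable independently of $x$, and this is the only place where the $C^2$ regularity of $f$ is used.
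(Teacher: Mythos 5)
Your proof is correct and follows exactly the same route as the paper: simplify $\varphi$ in the autonomous case, apply a second-order Taylor expansion of $f$ at $x$, identify $g f' - g' f = -W(f,g)$, and bound the remainder uniformly to force $\varphi \neq 0$, then invoke Theorem~\ref{thm:density:d=1}. Your version in fact places the arguments in the remainder term correctly as $\tfrac{1}{2} g(x)^2 f''(\xi_x)$, whereas the paper's displayed formula reads $\tfrac{1}{2} g(y_x)^2 f''(x)$ (an evident typo that does not affect the final bound since both factors are replaced by their sup norms anyway).
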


\begin{proof}
    As the parameters $f$ and $g$ do not depend on $t$, we have, for any $t\in [0,T]$ and $x\in \mathbb{R}$,
    $$\varphi(t,x) = f(x+g(x)) - f(x) - g'(x) f(x).$$
    Let $x\in\R$. Tthen, by Taylor expansion, there exists $y_x \in \mathbb{R}$ such that
    $$\varphi(t,x) = g(x) f'(x) + \dfrac{1}{2} g(y_x)^2 f''(x) - g'(x) f(x) = \dfrac{1}{2} g(y_x)^2 f''(x) - W(f,g)(x).$$
    Thus, if $\varphi(t,x) = 0$ then, according to the assumption,
    $$|W(f,g)(x)| = \dfrac{1}{2} g(y_x)^2 |f''(x)| \leq \dfrac{1}{2} \lVert g\rVert_\infty^2 \lVert f''\rVert_\infty < |W(f,g)(x)|$$
    which is absurd. Therefore $\varphi(t,x) \neq 0$ for any $t\in [0,T]$ and $x\in \mathbb{R}$. We conclude by using Theorem \ref{thm:density:d=1}.
\end{proof}

\begin{Example}
    We consider $X$ the solution of the SDE in dimension $d = 1$
    \begin{equation}
        dX_t = \cos(X_t) dt + \sin(X_{t-}) dN_t, \ 0\leq t\leq T, \quad X_0 = x_0,
    \end{equation}
    where $x_0 \in \mathbb{R}$. In particular
    $$\forall x\in \mathbb{R}, \quad |W(f,g)(x)| = 1 > \dfrac{1}{2} = \dfrac{1}{2} \lVert \cos''\rVert_\infty \lVert \sin\rVert_\infty^2.$$
    Thus, according to Corollary \ref{coro:wronskian}, conditionally to $\{N_T \geq 1\}$, the law of $X_T$ is absolutely continuous with respect to the Lebesgue measure on $\mathbb{R}$.
\end{Example}

\begin{Proposition} \label{propo:spanning}
    If there exists $\ell \in \mathbb{N}$ such that for any $n \geq \ell, 0\leq t_1 < \cdots < t_n \leq T$ and $x_1, \cdots, x_n \in \mathbb{R}^d$, the family $(K_T^{t_i} \varphi(t_i, x_i))_{1\leq i\leq n}$ spans $\mathbb{R}^d$ then, conditionally to $\{N_T \geq \ell\}$, the law of $X_T$ is absolutely continuous with respect to the Lebesgue measure on $\mathbb{R}^d$.
\end{Proposition}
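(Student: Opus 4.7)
The plan is to apply Corollary \ref{coro:criter:density}: it suffices to show that on the event $\{N_T \geq \ell\}$, the random matrix $\Gamma[X_T]$ is almost surely invertible, and then to restrict to the conditional law. Writing
$$v_i = K_T^{T_i}\,\varphi(T_i, X_{T_i-}) \in \mathbb{R}^d, \qquad \xi_{ij} = T_i \wedge T_j - \frac{T_i T_j}{T},$$
Proposition \ref{propo:derive:sde} yields on the event $\{N_T = n\}$
$$\Gamma[X_T] = \sum_{i,j=1}^n \xi_{ij}\, v_i v_j^{*}.$$

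For any $c \in \mathbb{R}^d$, setting $\alpha_i = \langle v_i, c \rangle$, we obtain
$$c^{*} \Gamma[X_T]\, c = \sum_{i,j=1}^n \xi_{ij}\, \alpha_i \alpha_j.$$
The key observation is that the quadratic form $(\alpha_i) \mapsto \sum_{i,j} \xi_{ij} \alpha_i \alpha_j$ is already shown to be positive definite inside the proof of Proposition \ref{propo:critere:local}: with the convention $T_0 = 0$, $T_{n+1} = T$, one has the inequality
$$\sum_{i,j=1}^n \xi_{ij}\, \alpha_i \alpha_j \geq \frac{1}{T} \sum_{k=1}^n (T_k - T_{k-1})(T_{k+1} - T_k)\, \alpha_k^2,$$
and since almost surely $0 < T_1 < \cdots < T_n < T$ on $\{N_T = n\}$ (the density of the jump times has no atoms on $\{T_n = T\}$), the right-hand side is strictly positive unless all $\alpha_k$ vanish. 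Hence $c^{*} \Gamma[X_T]\, c = 0$ forces $\langle v_i, c \rangle = 0$ for every $i = 1, \ldots, n$.

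On $\{N_T \geq \ell\}$, the spanning hypothesis applied to $(v_i)_{1\leq i \leq n}$ (with $n \geq \ell$) implies that the only $c$ orthogonal to every $v_i$ is $c = 0$. Thus $\Gamma[X_T]$ is positive definite, in particular invertible, almost surely on $\{N_T \geq \ell\}$. Corollary \ref{coro:criter:density} then yields that the law of $X_T$ conditional on $\{N_T \geq \ell\}$ is absolutely continuous with respect to the Lebesgue measure on $\mathbb{R}^d$. The only non-trivial ingredient is the strict positivity of the matrix $(\xi_{ij})$ on the open simplex, but this is already established in the proof of Proposition \ref{propo:critere:local}; all remaining steps are a direct application of linear algebra to the expression of $\Gamma[X_T]$ provided by Proposition \ref{propo:derive:sde}.
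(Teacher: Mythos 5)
Your proof is correct and arrives at the same conclusion by the same overall strategy as the paper (reduce to showing that $\Gamma[X_T]$ is a.s.\ invertible on $\{N_T\geq \ell\}$, then invoke Corollary~\ref{coro:criter:density}), but the mechanism you use for the key step is different. The paper's proof picks $u\neq 0$ in the kernel of $\Gamma[X_T]$, notes $u^* D_s X_T = 0$ for a.e.\ $s$, and then re-runs the interval-by-interval telescoping argument already used in Theorem~\ref{thm:density:d=1}: by evaluating on $[T_{N_T},T]$, then $[T_{N_T-1},T_{N_T}]$, etc., and subtracting, it extracts $u^* K_T^{T_i}\varphi(T_i,X_{T_i-})=0$ one index at a time. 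You instead read off $c^*\Gamma[X_T]c = \sum_{i,j}\xi_{ij}\alpha_i\alpha_j$ with $\alpha_i = \langle v_i,c\rangle$ and invoke the lower bound
\[
\sum_{i,j}\xi_{ij}\alpha_i\alpha_j \ \geq\ \frac{1}{T}\sum_{k=1}^n (T_k-T_{k-1})(T_{k+1}-T_k)\,\alpha_k^2,
\]
which was established inside the proof of Proposition~\ref{propo:critere:local} to verify condition~2 of (HG). This gives $\alpha_i=0$ for all $i$ in one stroke. The two routes encode the same computation (both amount to observing that $\xi_{ij}=\int_0^T(\tfrac{t_i}{T}-\mathbf 1_{[0,t_i]}(s))(\tfrac{t_j}{T}-\mathbf 1_{[0,t_j]}(s))\,ds$ and that these functions of $s$ are linearly independent when $0<t_1<\cdots<t_n<T$), but yours reuses a lemma already in the paper rather than redoing the iteration, which is slightly more economical. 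One small thing worth making explicit, which you have correctly but implicitly: to go from "$\Gamma[X_T]$ invertible a.s.\ on $\{N_T\geq\ell\}$" to the conditional absolute continuity on $\{N_T\geq\ell\}$, one uses Theorem~\ref{thm:abs_cont} (as in Corollary~\ref{coro:criter:density}), since $\{N_T\geq\ell\}\subset\{\det\Gamma[X_T]>0\}$ up to a null set.
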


\begin{proof}
    Let $\omega \in \{N_T \geq \ell\}$ such that $\Gamma[X_T](\omega)$ is non invertible. Then, as it is a nonnegative symetric matrix, there exists $u \in \mathbb{R}^d \backslash \{0\}$ such that
    $$u^* \Gamma[X_T] (\omega) u = \int_0^T (u^* D_s X_T(\omega))^2 ds = 0.$$
    Then, according to Proposition \ref{propo:derive:sde},
    $$0 = \int_0^T (u^* D_s X_T(\omega))^2 ds = \int_0^T \left(u^* \sum_{i=1}^{N_T(\omega)} K_T^{T_i} \varphi(T_i, X_{T_i-})\left( \dfrac{T_i}{T} -  \mathbf{1}_{[0,T_i]}(s) \right) \right)^2 ds.$$
    Thus, for almost every $s \in [0,T]$,
    $$u^* \sum_{i=1}^{N_T(\omega)} K_T^{T_i} \varphi(T_i, X_{T_i-})\left( \dfrac{T_i}{T} -  \mathbf{1}_{[0,T_i]}(s) \right) = 0.$$
    We deduce, as in dimension $d = 1$, that, for any $i\in \{1, \dots, N_T(\omega)\}$,
    $$u^* K_T^{T_i} \varphi(T_i, X_{T_i -}) = 0$$
    which is absurd because $(K_T^{T_i} \varphi(T_i, X_{T_i-}))_{1\leq i\leq N_T(\omega)}$ spans $\mathbb{R}^d$.
\end{proof}

\begin{Remark}
    Necessarily we have $\ell \geq d$.
\end{Remark}

\begin{Example}[Linear with constant coefficients in dimension $d \in \mathbb{N}^*$]

    We consider $X$ the solution of the linear SDE
    \begin{equation} 
        dX_t = (A X_t + b) dt + (M X_{t-} + \beta) dN_t, \ 0\leq t\leq T, \quad X_0 = x_0,
    \end{equation}
    where $x_0, b, \beta \in \mathbb{R}^d$ and $A,M \in \mathbb{R}^{d\times d}$.\\
  We assume that 
    $$\det(M + I_d) \neq 0, \quad AM = MA$$
    and that there exists $\ell \in \mathbb{N}$ such that, for any $n \geq \ell$ and $0\leq t_1 < \cdots < t_n \leq T$, the family 
    $$(\exp(A(T-t_i)) (I_d + M)^{n-i}(A\beta - Mb))_{1\leq i\leq n}$$
    spans $\mathbb{R}^d$. Then, conditionally to $\{N_T \geq \ell\}$, the law of $X_T$ is absolutely continuous with respect to the Lebesgue measure on $\mathbb{R}^d$.\\
    Indeed, in that case:
    \begin{itemize}
        \item The process $K$ defined by Definition \ref{def:K} satisfies
        $$K_t = I_d + \int_0^t A K_s ds + \int_{(0,t]} M K_{s-} dN_s, \quad 0\leq t\leq T,$$
        i.e.
        $$dK_t = A K_s ds + M K_{s-} dN_s, \quad 0\leq t\leq T, \quad K_0 = I_d,$$
        i.e., for any $t \in [0,T]$, as $AM = MA$,
        $$K_t = \exp\left( A t \right) \prod_{0< s\leq t} \left(I_d + M \Delta N_s \right) = \exp(At) (I_d+M)^{N_t}.$$
        \item For any $t \in [0,T]$, since matrices $A$ and $M$ commute:
        $$\widetilde{K}_t=(K_t)^{-1}=\exp(-At) (I_d + M)^{-N_t}.$$
        \item The process $(K_t^s)$ is equal to, for any $0\leq s\leq t\leq T$,
        \begin{eqnarray*}
            K_t^s & = & K_t \widetilde{K}_s
            \\ & = & \exp(A(t-s)) (I_d+M)^{N_t - N_s}.
        \end{eqnarray*}
        \item According to Proposition \ref{propo:derive:sde}, we consider the function $\varphi : [0,T] \times \mathbb{R} \longrightarrow\mathbb{R}$ defined by, for any $t\in [0,T]$ and $x\in \mathbb{R}$,
        \begin{eqnarray*}
            \varphi(t,x) & = & A(x+M x + \beta) + b -(I_d+M)(A x + b) 
            \\ & = & A \beta - M b.
        \end{eqnarray*}
    \end{itemize}
    Therefore,  we get the absolute continuity criterion as a consequence of Proposition \ref{propo:spanning}.
    
    As an example of matrices $A$ and $M$ satisfying the assumptions, we can take  $A = I_d$ the identity matrix, $M$  a diagonalizable matrix with $d$ distinct eigenvalues (different to $-1$ to have $\det(M + I_d) \neq 0$): there exists $\lambda_1, \cdots, \lambda_d$ in $\mathbb{R}\backslash\{-1\}$ (distinct) and $P \in GL_d(\mathbb{R})$ such that
    $$M = P \left( \begin{array}{ccc} \lambda_1 & & (0) \\ & \ddots & \\ (0) & & \lambda_d \end{array} \right) P^{-1},$$
    and $v:= [P^{-1}(\beta - Mb)]_j = [P^{-1} \beta - DP^{-1} b]_j \neq 0$ for any $j\in \{1, \cdots, d\}$. Thus, for any $n \geq \ell, 0\leq t_1 < \cdots < t_n \leq T$ and $i\in \{1, \cdots, n\}$,
    \begin{eqnarray*}
        && \exp(A(T-t_i)) (I_d + M)^{n-i}(A\beta - Mb)
        \\ && = e^{T-t_i} P \left( \begin{array}{ccc} (1+\lambda_1)^{n-i} & & (0) \\ & \ddots & \\ (0) & & (1+\lambda_d)^{n-i} \end{array} \right) P^{-1} (\beta - Mb).
    \end{eqnarray*}
    Therefore the family $(\exp(A(T-t_i)) (I_d + M)^{n-i}(A\beta - Mb))_{1\leq i\leq n}$ spans $\mathbb{R}^d$ if and only if the family 
    $$\left( \left( \begin{array}{c} (1+\lambda_1)^{n-1} v_1 \\ \vdots \\ (1+\lambda_d)^{n-1} v_d \end{array} \right), \cdots, \left( \begin{array}{c} (1+\lambda_1) v_1 \\ \vdots \\ (1+\lambda_d) v_d \end{array} \right), \left( \begin{array}{c} v_1 \\ \vdots \\ v_d \end{array} \right) \right)$$
    spans $\mathbb{R}^d$. The determinant of the last $d$ vectors of this family is related to a Vandermonde determinant:
    $$v_1 \cdots v_d \prod_{1\leq i<j\leq d} (\lambda_j - \lambda_i)$$
    which is not null. Therefore, conditionally to $\{N_T \geq \ell\}$, the law of $X_T$ admits an absolutely continuous law with respect to the Lebesgue measure on $\mathbb{R}^d$.
\end{Example}

\subsection{Application to Greek computation}
\subsubsection{The model we consider}
We consider an asset price whose dynamics is given by
\begin{equation} \label{eq:wealth}
    dS_t = r S_t dt + \sigma S_{t-} d\widetilde{N}_t, \quad S_0 = x_0,
\end{equation}
where $\widetilde{N}$ is the compensated Hawkes process equal to $\widetilde{N}_t = N_t - \int_0^t \lambda^*(s) ds$, $r$ the interest rate, $\sigma$ the volatility and $x_0$ the initial wealth. In other words we have
$$dS_t = (r - \sigma \lambda^*(t))S_t dt + \sigma S_{t-} dN_t.$$
Thus, if we write $\alpha_t = r - \sigma \lambda^*(t)$ then the dynamic is equivalent to
\begin{equation} \label{eq:important}
    dS_t = \alpha_t S_t dt + \sigma S_{t-} dN_t, \quad S_0 = x_0.
\end{equation}
We also consider an European option
$$C = \mathbb{E}\left[ f(S_T) \right]$$
with $f$ a function which can be not continuous as $f = 1_{[K,+\infty[}$ for example for a binary European option, and we interest to compute associated Greeks
$$\Delta = \dfrac{\partial C}{\partial x_0}, \quad \Gamma = \dfrac{\partial^2 C}{\partial x^2_0}, \quad \rho = \dfrac{\partial C}{\partial r}, \quad \nu = \dfrac{\partial C}{\partial \sigma}.$$
In the sequel $x$ denotes a real number in an interval $(a,b)$ which can be equal to $x_0, r$ or $\sigma$.
For a family of random variables $F=(F^x)_{a<x<b}$, 
we consider a class $\mathcal{L}(F)$ of real functions $f$ of the form
\begin{equation} \label{eq:not:regular}
    f(y) = \sum_{i=1}^n \Phi_i(y) 1_{A_i}(y), \quad y\in \mathbb{R},
\end{equation}
where $n\in \mathbb{N}$, $\Phi_i$ is continuous and bounded and $A_i$ is an interval with endpoints in $\mathbb{T}= \mathbb{T}_0 \cup \{-\infty, \infty\}$ with the set $\mathbb{T}_0 \subset \mathbb{R}$ defined by:
$$\mathbb{T}_0 = \left\{ y \in \mathbb{R} \quad \lim_{n \to \infty} \sup_{a<x<b} \mathbb{P} \left(F^x \in \left( y-\dfrac{1}{n}, y+\dfrac{1}{n} \right) \right) = 0 \right\}.$$ 

\begin{Proposition} \label{propo:calcul:greeks}
    Let $(F^x)_{a<x<b}$ and $(G^x)_{a<x<b}$ be two families of random variables such that the maps $x \in (a,b) \longmapsto F^x \in \mathbb{D}^{1,2}$ and $x\in (a,b) \longmapsto \mathbb{D}^{1,2}$ are continuously differentiable. Let $m \in \mathcal{H}$ such that for any $x \in (a,b)$ on $\left\{ \dfrac{\partial F^x}{\partial x} \neq 0 \right\}$
    $$D_m F^x \neq 0$$
    and such that $m G^x \dfrac{\frac{\partial F^x}{\partial x}}{D_m F^x}$ is continuous in $x$ in $\text{Dom}(\delta)$. Thus for any $f \in \mathcal{L}(F)$ the map $x \longmapsto \mathbb{E}[f(F^x)]$ is continuous differentiable and
    \begin{equation}\label{Dependancem}
   \dfrac{\partial}{\partial x} \mathbb{E}[G^x f(F^x)] = \mathbb{E}\left[ f(S_T^{x_0}) \delta \left(G^x m \dfrac{\frac{\partial F^x}{\partial x}}{D_m F^x} \right) \right] + \mathbb{E}\left[ \dfrac{\partial G^x}{\partial x} f(F^x) \right]. \end{equation}
\end{Proposition}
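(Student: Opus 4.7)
The natural strategy is to prove the identity first for smooth $f$ and then extend it to the whole class $\mathcal{L}(F)$ by a regularization argument, exploiting the definition of the set $\mathbb{T}_0$.

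For the smooth case, I would take $f\in C^1_b(\R)$ with bounded derivative. The $L^2$-continuous differentiability of $x\mapsto F^x, G^x$, together with dominated convergence, gives that $x\mapsto \mathbb{E}[G^xf(F^x)]$ is $C^1$ and
\[
\frac{\partial}{\partial x}\mathbb{E}[G^x f(F^x)]=\mathbb{E}\!\left[\frac{\partial G^x}{\partial x}f(F^x)\right]+\mathbb{E}\!\left[G^x f'(F^x)\frac{\partial F^x}{\partial x}\right].
\]
The key observation is that by Corollary \ref{rem:D:by:parts} the directional chain rule gives $D_m f(F^x)=f'(F^x)\,D_mF^x$. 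By assumption $D_mF^x\neq 0$ on $\{\partial_xF^x\neq 0\}$, so on that set $f'(F^x)=D_mf(F^x)/D_mF^x$ and one can write
\[
G^x f'(F^x)\frac{\partial F^x}{\partial x}=D_m f(F^x)\cdot G^x\frac{\partial F^x/\partial x}{D_mF^x},
\]
where the right-hand side is understood to be $0$ on $\{\partial_xF^x=0\}$. Applying the integration-by-parts formula from Remark \ref{remark:resume}(2) with $A=G^x\,\partial_xF^x/D_mF^x$ (which belongs to $L^2$ by the hypothesis that $mA\in\mathrm{Dom}(\delta)$) yields
\[
\mathbb{E}\!\left[G^x f'(F^x)\frac{\partial F^x}{\partial x}\right]=\mathbb{E}\!\left[f(F^x)\,\delta\!\left(G^x m\frac{\partial F^x/\partial x}{D_mF^x}\right)\right],
\]
which is \eqref{Dependancem} in the smooth case.

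For the extension to $f\in\mathcal{L}(F)$, write $f=\sum_{i=1}^n\Phi_i\mathbf 1_{A_i}$ as in \eqref{eq:not:regular} and, for each $i$, choose a sequence $\chi_{i,k}\in C^\infty_b(\R)$ with $0\leq\chi_{i,k}\leq 1$, $\chi_{i,k}\to\mathbf 1_{A_i}$ pointwise, and $\chi_{i,k}$ differing from $\mathbf 1_{A_i}$ only on a $1/k$-neighbourhood of the endpoints of $A_i$. Setting $f_k=\sum_i\Phi_i\chi_{i,k}\in C^1_b(\R)$, the previous step yields
\[
\frac{\partial}{\partial x}\mathbb{E}[G^x f_k(F^x)]=\mathbb{E}\!\left[f_k(F^x)\,\delta\!\left(G^x m\frac{\partial F^x/\partial x}{D_mF^x}\right)\right]+\mathbb{E}\!\left[\frac{\partial G^x}{\partial x}f_k(F^x)\right].
\]
Because every endpoint of each $A_i$ lies in $\mathbb{T}$, the defining property of $\mathbb{T}_0$ ensures
\[
\sup_{a<x<b}\mathbb{P}\!\left(|F^x-y|\leq 1/k\right)\xrightarrow[k\to\infty]{}0
\]
at each finite endpoint $y$, so $\|f_k(F^x)-f(F^x)\|_{L^p}\to 0$ uniformly in $x\in(a,b)$ for every $p\geq 1$. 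Combined with the continuity in $x$ of $\delta(G^x m\,\partial_xF^x/D_mF^x)$ and $\partial_xG^x$ in $L^2$, this gives uniform convergence of both sides of the displayed identity on compact subsets of $(a,b)$. Passing to the limit $k\to\infty$ then shows that $x\mapsto\mathbb{E}[G^x f(F^x)]$ is $C^1$ and satisfies \eqref{Dependancem}.

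The main obstacle is precisely the last point: one needs uniform-in-$x$ control when passing from $f_k$ to $f$, otherwise one only obtains differentiability at fixed $x$ rather than a genuine derivative of the function $x\mapsto\mathbb{E}[G^xf(F^x)]$. This is exactly what the nonstandard definition of $\mathbb{T}_0$ in terms of $\sup_{a<x<b}$ is tailored for, and it is the reason the class $\mathcal{L}(F)$ is defined in that specific way.
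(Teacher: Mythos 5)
Your argument follows the same route as the paper: prove the formula first for $f\in C^1_b$ by differentiating under the expectation, converting $f'(F^x)$ into $D_m f(F^x)/D_m F^x$ via the chain rule, and invoking the duality $\mathbb{E}[A\,D_m F]=\mathbb{E}[F\,\delta(mA)]$; then extend to $\mathcal{L}(F)$ by approximation, using the $\sup_{a<x<b}$ in the definition of $\mathbb{T}_0$ to get uniform-in-$x$ $L^p$-convergence $f_k(F^x)\to f(F^x)$ and hence $C^1$-convergence of $x\mapsto\mathbb{E}[G^x f_k(F^x)]$. The paper carries out the smooth case in exactly this way and only references \cite{denis:nguyen} for the approximation step, so you are supplying detail the paper leaves implicit; your remark on why $\mathbb{T}_0$ is defined via a $\sup$ over $x$ is the right explanation.

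One small gap: $\mathcal{L}(F)$ only requires each $\Phi_i$ to be continuous and bounded, so $f_k=\sum_i\Phi_i\chi_{i,k}$ is generally \emph{not} $C^1_b$ as you assert, and the smooth case cannot be applied to it directly. You need an additional layer: either mollify $f_k$ itself, or first replace each $\Phi_i$ by a $C^1_b$ approximation $\Phi_{i,j}$ (uniform convergence on compacts plus a uniform bound is enough, since $\Phi_{i,j}(F^x)\to\Phi_i(F^x)$ in $L^p$ uniformly in $x$ by tightness of $\{F^x\}$), and then pass to the double limit $j,k\to\infty$. With this fix, the argument is complete.
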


\begin{proof}
    We follow the proof of \cite[Proposition 7.2]{denis:nguyen}. First if $f \in C_b^1(\mathbb{R})$ then, as the maps $x\longmapsto F^x$ and $x\longmapsto G^x$ are differentiable, the map $x\longmapsto G^x f(F^x)$ is differentiable and, for any $x\in (a,b)$,
    \begin{eqnarray*}
        \dfrac{\partial}{\partial x}(G^x f(F^x)) & = & \dfrac{\partial G^x}{\partial x} f(F^x) + \dfrac{\partial F^x}{\partial x} f'(F^x) G^x.
    \end{eqnarray*}
    Then, as $f \in C^1_b(\mathbb{R})$, $D_m f(F^x) = f'(F_x) D_m F^x$, and, as $D_m F^x \neq 0$ on $\left\{ \dfrac{\partial F^x}{\partial x} \neq 0 \right\}$,
    \begin{eqnarray*}
        \dfrac{\partial}{\partial x}(G^x f(F^x)) & = & \dfrac{\partial G^x}{\partial x} f(F^x) + \dfrac{\partial F^x}{\partial x} \dfrac{D_m f(F^x)}{D_m F^x} G^x.
    \end{eqnarray*}
    Thus, according to Remark \ref{remark:resume} and $m G^x \dfrac{\frac{\partial F^x}{\partial x}}{D_m F^x} \in \text{Dom}(\delta)$,
    \begin{eqnarray*}
        \mathbb{E}\left[\dfrac{\partial}{\partial x}(G^x f(F^x))\right] & = & \mathbb{E}\left[\dfrac{\partial G^x}{\partial x} f(F^x)\right] + \mathbb{E}\left[ G^x \frac{\dfrac{\partial F^x}{\partial x}}{D_m F^x} D_m f(F^x) \right]
        \\ & = & \mathbb{E}\left[\dfrac{\partial G^x}{\partial x} f(F^x)\right] + \mathbb{E}\left[ \delta\left(m G^x \frac{\dfrac{\partial F^x}{\partial x}}{D_m F^x}\right) f(F^x) \right].
    \end{eqnarray*}
    Finally, as the function $x \longmapsto \dfrac{\partial}{\partial x} (G^x f(F^x))$ is continuous, we have
    \begin{eqnarray*}
        \dfrac{\partial}{\partial x}\mathbb{E}[G^x f(F^x)] & = & \mathbb{E}\left[\dfrac{\partial}{\partial x}(G^x f(F^x))\right]
    \end{eqnarray*}
    and the result follows in that case. For the general case $(f\in \mathcal{L}(F))$ we conclude as in \cite[Proposition 7.2]{denis:nguyen}, using an approximation argument.
\end{proof}

\begin{Remark}
One can remark that in relation \eqref{Dependancem}, the right hand side term depends on the choice of $m$. For numerical purpose this choice should be investigated and is left for further researches.
\end{Remark}
As in \cite{khatib:privault}, if we can apply the Proposition \ref{propo:calcul:greeks} with
$$F^x = S_T^x, \quad G^x =  \mathbf{1}_{\{N_T > 0\}}, \quad x \in (a,b),$$
then we can compute the Delta conditionally to $\{N_T\geq 1\}$
$$\dfrac{\partial}{\partial x_0} \mathbb{E}\left[  \mathbf{1}_{\{N_T > 0\}} f\left(S_T^{x_0}\right) \right] = \mathbb{E}\left[ f(S_T^{x_0}) \delta\left(m  \mathbf{1}_{\{N_T > 0\}} \dfrac{\frac{\partial S_T^{x_0}}{\partial x_0}}{D_m S_T^{x_0}} \right) \right].$$
The solution of \eqref{eq:wealth}
$$dS_t = S_t \alpha_t dt + \sigma S_{t-} dN_t, \quad S_0 = x_0,$$
is given by
$$S_t^{x_0} = x_0 \exp\left( \int_0^t \alpha_s ds \right) (1+\sigma)^{N_t} = x_0 \exp \left( rt - \sigma \int_0^t \lambda^*(s) ds \right)(1+\sigma)^{N_t}, \quad 0\leq t\leq T.$$
In particular
$$S_T^{x_0} = x_0 \exp\left( rT - \sigma \Lambda_T \right) (1+\sigma)^{N_T}$$
with  {  
\begin{eqnarray*}
    \Lambda_T & = & \int_0^T \lambda^*(s) ds
= \int_0^T\left( \lambda_s+ \gamma \left( \sum_{i=1}^{N_T}  \mu(s-T_i)  \mathbf{1}_{\{T_i \leq s\}}\right)\right) ds
\end{eqnarray*}
}
Thus the map $x_0 \longmapsto S_T^{x_0}$ is continuously differentiable and
$$\dfrac{\partial S_T^{x_0}}{\partial x_0} = \exp \left( rT - \sigma \Lambda_T \right)  (1+\sigma)^{N_T} = \dfrac{S_T^{x_0}}{x_0}.$$
To compute $D S_T^{x_0}$ we cannot use the results of Subsection \ref{sect:sde:density} because the paramater of the SDE $f : (t,x) \longmapsto \alpha_t x$ is not deterministic. {    However we first remark that 
$$\Lambda_T =\int_0^T \lambda_s ds  \mathbf{1}_{\{ \N_T =0\}}+\sum_{n=1}^{+\infty}\int_0^T \lambda^* (s; T_1 ,\cdots ,T_n )ds  \mathbf{1}_{\{ N_T =n\}},$$
and following calculations of Section \ref{Section2.5}, we know that for all $n\geq 1$ and $i_0\in \{1,\cdots ,n\}$:
\begin{align*}  \dfrac{\partial}{\partial t_{i_0}}\int_0^T \lambda^*(s;t_1, \dots, t_n) ds & = \gamma \left(\sum_{j=1}^{i_0-1} \mu(t_{i_0}-t_j)\right) - \gamma \left(\sum_{j=1}^{i_0-1} \mu(t_{i_0}-t_j) + \mu(0) \right) \\
    & - \int_{t_{i_0}}^T  \gamma' \left(\sum_{j=1}^n \mu(s-t_j) 1_{\{s > t_j\}}\right) \mu'(s-t_{i_0}) ds.
    \end{align*}}
Thus $\Lambda_T \in \mathbb{D}^0_m$ and
{  $$D_m \Lambda_T =\sum_{n=1}^n D_m \Lambda_T  \mathbf{1}_{\{ \N_T =n\}},$$
with for each $n\geq 1$:
$$D_m \Lambda_T 1_{\{ \N_T =n\}}=
\sum_{i_0=1}^n \dfrac{\partial}{\partial t_{i_0}}\int_0^T \lambda^*(s;T_1, \dots, T_n) ds  \cdot \widehat{m} (T_{i_0})$$
where 
\begin{align*}
& \dfrac{\partial}{\partial t_{i_0}}\int_0^T \lambda^*(s;t_1, \dots, t_n) ds = \gamma \left(\sum_{j=1}^{i_0-1} \mu(T_{i_0}-T_j)\right) \\
&\qquad - \gamma \left(\sum_{j=1}^{i_0-1} \mu(T_{i_0}-T_j) + \mu(0) \right)  - \int_{T_{i_0}}^T  \gamma' \left(\sum_{j=1}^n \mu(s-T_j) 1_{\{s > T_j\}}\right) \mu'(s-T_{i_0}). 
\end{align*}

\subsubsection{The linear case}

In order to simplify the calculations, we now assume that we are in the linear case i.e. $\gamma (x)=x$. 
In that case, we have}
$$D_m \Lambda_T = \sum_{i=1}^{N_T} \mu(T-T_i) \widehat{m}(T_i) = \int_{(0,T]} \mu(T-t) \widehat{m}(t) dN_t.$$
Since $D_m N_T = 0$ (see Remark \ref{DN=0}), we get, by chain rule, $S_T^{x_0} \in \mathbb{D}^{1,2}_m$ and
$$D_m S_T^{x_0} = - \sigma S_T^{x_0} D_m \Lambda_T = -\sigma S_T^{x_0} \sum_{i=1}^{N_T} \mu(T-T_i) \widehat{m}(T_i) = - \sigma S_T^{x_0} \int_{(0,T]} \mu(T-t) \widehat{m}(t) dN_t.$$
Thus we have to choose a function $m \in \mathcal{H}$ such that, for any $t\in [0,T], \widehat{m}(t) = 0$ if and only if $t\in \{0,T\}$. 
In this case we get
$$\mathbf{1}_{\{N_T > 0\}} \dfrac{\frac{\partial S_T^{x_0}}{\partial x_0}}{D_m S_T^{x_0}} = -\dfrac{\mathbf{1}_{\{N_T> 0\}}}{\sigma x_0 \sum_{i=1}^{N_T} \mu(T-T_i) \widehat{m}(T_i)} = - \dfrac{\mathbf{1}_{\{N_T> 0\}}}{\sigma x_0 \int_{(0,T]} \mu(T-t) \widehat{m}(t) dN_t}.$$
Finally, according to Remark \ref{remark:resume}, $m \mathbf{1}_{\{N_T>0\}} \dfrac{\frac{\partial S_T^{x_0}}{\partial x_0}}{D_m S_T^{x_0}} \in \text{Dom}(\delta)$ because we have $m \in \mathcal{H}$ and $\mathbf{1}_{\{N_T>0\}} \dfrac{\frac{\partial S_T^{x_0}}{\partial x_0}}{D_m S_T^{x_0}} \in \mathbb{D}^{1,2}_m$, and
\begin{align*}
    & \delta \left(m \mathbf{1}_{\{N_T> 0\}} \dfrac{\frac{\partial S_T^{x_0}}{\partial x_0}}{D_m S_T^{x_0}} \right)
    \\ & = - \delta(m) \dfrac{\mathbf{1}_{\{N_T> 0\}}}{\sigma x_0 \sum_{i=1}^{N_T} \mu(T-T_i) \widehat{m}(T_i)} + D_m \left( \dfrac{1_{\{N_T> 0\}}}{\sigma x_0 \sum_{i=1}^{N_T} \mu(T-T_i) \widehat{m}(T_i)} \right)
    \\ & = - \delta(m) \dfrac{\mathbf{1}_{\{N_T> 0\}}}{\sigma x_0 \int_{(0,T]} \mu(T-t) \widehat{m}(t) dN_t} + D_m \left( \dfrac{\mathbf{1}_{\{N_T> 0\}}}{\sigma x_0 \int_{(0,T]} \mu(T-t) \widehat{m}(t) dN_t} \right)
\end{align*}
with, on $\{N_T>0\}$,
\begin{align*}
    & D_m \left( \dfrac{\mathbf{1}_{\{N_T> 0\}}}{\sigma x_0 \sum_{i=1}^{N_T} \mu(T-T_i) \widehat{m}(T_i)} \right)
    \\ & = - \dfrac{\sigma x_0 \sum_{i=1}^{N_T} D_m(\mu(T-T_i) \widehat{m}(T_i))}{\sigma^2 x_0^2 \left( \sum_{i=1}^{N_T} \mu(T-T_i) \widehat{m}(T_i)\right)^2} 
    \\ & = - \dfrac{\sum_{i=1}^{N_T} [\widehat{m}(T_i) D_m(\mu(T-T_i)) + \mu(T-T_i) D_m \widehat{m}(T_i)]}{\sigma x_0 \left( \sum_{i=1}^{N_T} \mu(T-T_i) \widehat{m}(T_i)\right)^2}
    \\ & = \dfrac{\sum_{i=1}^{N_T} \widehat{m}(T_i) \mu'(T-T_i) D_mT_i}{\sigma x_0 \left( \sum_{i=1}^{N_T} \mu(T-T_i) \widehat{m}(T_i)\right)^2} - \dfrac{\sum_{i=1}^{N_T} \mu(T-T_i) m(T_i)D_mT_i}{\sigma x_0 \left( \sum_{i=1}^{N_T} \mu(T-T_i) \widehat{m}(T_i)\right)^2}
    \\ & = -\dfrac{\sum_{i=1}^{N_T} \mu'(T-T_i) \widehat{m}(T_i)^2}{\sigma x_0 \left( \sum_{i=1}^{N_T} \mu(T-T_i) \widehat{m}(T_i)\right)^2} + \dfrac{\sum_{i=1}^{N_T} \mu(T-T_i) m(T_i)\widehat{m}(T_i)}{\sigma x_0 \left( \sum_{i=1}^{N_T} \mu(T-T_i) \widehat{m}(T_i)\right)^2}
    \\ & = -\dfrac{\int_{(0,T]} \mu'(T-s) \widehat{m}(s)^2 dN_s}{\sigma x_0 \left( \int_{(0,T]} \mu(T-s) \widehat{m}(s) dN_s\right)^2}+ \dfrac{\int_{(0,T]} \mu(T-s) m(s)\widehat{m}(s) dN_s}{\sigma x_0 \left( \int_{(0,T]} \mu(T-s) \widehat{m}(s) dN_s\right)^2},
\end{align*}
and, using Remark \ref{remark:resume}, 
\begin{align*}
    \delta(m) & =  \int_{(0,T]} (\psi(m, s) + \widehat{m}(s) \mu(T-s) +  m(s)) dN_s
    \\ & =  \sum_{j=1}^{N_T} (\psi(m, T_j) + \widehat{m}(T-T_j) + m(T_j))
    \\ & =  \sum_{j=1}^{N_T} \left( \dfrac{1}{\lambda^*(T_j)}\int_{(0,T_j)} (\widehat{m}(T_j) - \widehat{m}(t) )\mu'(T_j-t) dN_t + \widehat{m}(T-T_j) + m(T_j) \right)
    \\ & =  \sum_{j=1}^{N_T} \left(\dfrac{\sum_{i=1}^{j-1} (\widehat{m}(T_j) - \widehat{m}(T_i))\mu'(T_j - T_i)}{\lambda + \sum_{i=1}^{j-1} \mu(T_j-T_i)} + \widehat{m}(T-T_j) + m(T_j) \right).
\end{align*}
Thus we deduce the following expression of the Delta:
\begin{Proposition} \label{propo:delta}
    We have
    \begin{eqnarray*}
        && \dfrac{\partial}{\partial x_0} \mathbb{E}[\mathbf{1}_{\{N_T\geq 1\}} f(S_T)]
        \\ && = \mathbb{E}\left[ f(S_T^{x_0}) \delta\left(m \mathbf{1}_{\{N_T> 0\}} \dfrac{\frac{\partial S_T^{x_0}}{\partial x_0}}{D_m S_T^{x_0}} \right) \right]
        \\ && = - \mathbb{E}\left[ \dfrac{f(S_T^{x_0}) \delta(m)\mathbf{1}_{\{N_T> 0\}}}{\sigma x_0 \int_{(0,T]} \mu(T-t) \widehat{m}(t) dN_t} \right] - \mathbb{E}\left[ \dfrac{f(S_T^{x_0}) \int_{(0,T]} \mu'(T-s) \widehat{m}(s)^2 dN_s}{\sigma x_0 \left( \int_{(0,T]} \mu(T-s) \widehat{m}(s) dN_s \right)^2}\mathbf{1}_{\{N_T> 0\}}\right]
        \\ && \quad + \mathbb{E}\left[ \dfrac{f(S_T^{x_0}) \int_{(0,T]} \mu(T-s) m(s) \widehat{m}(s) dN_s}{\sigma x_0 \left( \int_{(0,T]} \mu(T-s) \widehat{m}(s) dN_s \right)^2}\mathbf{1}_{\{N_T> 0\}} \right].
    \end{eqnarray*}
\end{Proposition}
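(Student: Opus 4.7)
My plan is to apply Proposition \ref{propo:calcul:greeks} with $F^{x_0} = S_T^{x_0}$ and $G^{x_0} = \mathbf{1}_{\{N_T > 0\}}$, exploiting the fact that $G^{x_0}$ is constant in $x_0$, so that $\partial G^{x_0}/\partial x_0 = 0$ and only the divergence term survives. The closed-form expression $S_T^{x_0} = x_0 \exp(rT - \sigma \Lambda_T)(1+\sigma)^{N_T}$ established above shows that $x_0 \mapsto S_T^{x_0}$ is $\mathbb{R}$-linear, hence trivially continuously differentiable with values in $\mathbb{D}^{1,2}$, and $\partial S_T^{x_0}/\partial x_0 = S_T^{x_0}/x_0$.

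The key step is to verify the hypotheses of Proposition \ref{propo:calcul:greeks} for the chosen $m$. On $\{N_T > 0\}$ we have $\partial S_T^{x_0}/\partial x_0 \neq 0$, and by the chain rule
$$D_m S_T^{x_0} = -\sigma S_T^{x_0} \int_{(0,T]} \mu(T-t)\widehat m(t)\, dN_t.$$
Choosing $m\in\mathcal H$ with $\widehat m(t) = 0$ iff $t\in\{0,T\}$ (which is feasible since $\mathcal H$ is the orthogonal of constants in $L^2([0,T])$) and using that $\mu > 0$ on $[0,T)$ (strictly, by combining the non-negativity of $\mu$ and the assumption that permits a non-trivial integrand), the sum $\int_{(0,T]}\mu(T-t)\widehat m(t)\,dN_t$ is non-zero on $\{N_T > 0\}$, so $D_m S_T^{x_0}\neq 0$ there. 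The continuity in $x_0$ of the process $m \cdot A^{x_0}$ with $A^{x_0} := \mathbf 1_{\{N_T>0\}}(\partial_{x_0} S_T^{x_0})/D_m S_T^{x_0}$ inside $\mathrm{Dom}(\delta)$ follows from the fact that $A^{x_0}$ is actually independent of $x_0$: the $x_0$ factors in numerator and denominator cancel.

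Once Proposition \ref{propo:calcul:greeks} applies, I would invoke the product identity of Remark \ref{remark:resume}:
$$\delta\!\left(m\, A\right) = \delta(m)\, A - D_m A,$$
with $A = \mathbf 1_{\{N_T>0\}}(\partial_{x_0} S_T^{x_0})/D_m S_T^{x_0} = -\mathbf 1_{\{N_T>0\}}\bigl(\sigma x_0 \int_{(0,T]}\mu(T-t)\widehat m(t)\,dN_t\bigr)^{-1}$. The quantity $\delta(m)$ is read off from the third item of Remark \ref{remark:resume} specialized to the linear case where $\Gamma_1(s)+\Gamma_2(s)=\mu(T-s)$ (Remark \ref{rem:deriv_linear_case}), giving precisely the first displayed integral in the statement. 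The derivative $D_m A$ is computed by the chain rule: differentiating the reciprocal produces $-A^2/( \sigma x_0)$ times $D_m\!\left(\int_{(0,T]}\mu(T-t)\widehat m(t)\,dN_t\right)$, and on $\{N_T>0\}$ the latter equals $\sum_{i=1}^{N_T} D_m\bigl(\mu(T-T_i)\widehat m(T_i)\bigr)$ since $D_m N_T = 0$ (Remark \ref{DN=0}). Using $D_m T_i = -\widehat m(T_i)$ (Lemma \ref{lemma:DTj}) and $D_m(\widehat m(T_i)) = m(T_i) D_m T_i = -m(T_i)\widehat m(T_i)$, this yields the two remaining integral terms $\int \mu'(T-s)\widehat m(s)^2\,dN_s$ and $\int \mu(T-s) m(s)\widehat m(s)\,dN_s$, with the signs matching the statement.

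The main technical obstacle is the domain condition $m\, A^{x_0} \in \mathrm{Dom}(\delta)$ together with the necessary integrability so that each of the $L^2$-expectations in the final formula makes sense. This requires a lower bound on $\int_{(0,T]}\mu(T-t)\widehat m(t)\, dN_t$ on $\{N_T>0\}$ to control the singular factor $1/(\int\mu(T-t)\widehat m(t)\,dN_t)^2$. A concrete route is to choose $m$ with $\widehat m$ strictly positive on $(0,T)$ and uniformly bounded away from $0$ on compact sub-intervals, then condition on $\{N_T=n\}$ and use the conditional density bound \eqref{eq:bound_dens_jump_times} together with the fast decay of $\mathbb P(N_T = n)$ (which follows from the moment bounds for Hawkes processes under Assumption \ref{ass:lambda_mu}) to sum the conditional $L^p$-norms. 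Routine but careful truncation/approximation, combined with the boundedness of $f$ inside the class $\mathcal L(F)$, allows one to pass the integrability through the expectations and conclude.
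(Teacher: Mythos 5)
Your proposal follows the paper's proof line by line: you invoke Proposition \ref{propo:calcul:greeks} with $F^{x_0}=S_T^{x_0}$, $G^{x_0}=\mathbf 1_{\{N_T>0\}}$, use the closed-form $S_T^{x_0}=x_0 e^{rT-\sigma\Lambda_T}(1+\sigma)^{N_T}$ to identify $\partial_{x_0}S_T^{x_0}=S_T^{x_0}/x_0$ and $D_m S_T^{x_0}=-\sigma S_T^{x_0}\int\mu(T-t)\widehat m(t)\,dN_t$, then apply the product identity $\delta(mA)=\delta(m)A-D_mA$ from Remark \ref{remark:resume}, the linear-case identity $\Gamma_1+\Gamma_2=\mu(T-\cdot)$ from Remark \ref{rem:deriv_linear_case}, and the chain rule with $D_m T_i=-\widehat m(T_i)$. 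This is precisely the paper's argument.

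One small bookkeeping slip: with $A=-\mathbf 1_{\{N_T>0\}}\big(\sigma x_0\int_{(0,T]}\mu(T-t)\widehat m(t)\,dN_t\big)^{-1}$ and $I=\int_{(0,T]}\mu(T-t)\widehat m(t)\,dN_t$, the chain rule gives
\begin{equation*}
D_m A=\frac{D_m I}{\sigma x_0\, I^2}=\sigma x_0\,A^2\,D_m I,
\end{equation*}
not $-A^2/(\sigma x_0)\cdot D_m I$ as stated; the latter has the wrong sign and the wrong power of $\sigma x_0$. Since you report that the final signs match the statement, this reads as a typo in the prose rather than a substantive error, but you should correct it. Your closing paragraph on the domain condition $mA^{x_0}\in\mathrm{Dom}(\delta)$ and the integrability of the singular factor $1/I^2$ is a genuine addition: the paper simply asserts the domain condition by citing Remark \ref{remark:resume} without verifying that the required $L^2$ bounds hold, so your sketch of a uniform lower bound on $\widehat m$ away from $\{0,T\}$ together with the conditional density bound \eqref{eq:bound_dens_jump_times} and moment estimates for $N_T$ is a useful supplement.
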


\begin{Remark}
    Any term in the expression of $\Delta$ can be written from the Hawkes process $N$, the jump instants $(T_i)_{i\in \mathbb{N}^*}$ and the parameters $T, \lambda, \mu, \widehat{\mu}, \mu', m, \widehat{m}$ and $f$:
    \begin{eqnarray*}
        S_T^{x_0} & = & x_0 \exp\left( rT - \sigma \Lambda_T \right) (1+\sigma)^{N_T},
        \\ \Lambda_T & = & { \int_0^T \lambda_s ds}+ \sum_{i=1}^{N_T} \widehat{\mu}(T-T_i),
        \\ \delta(m) & = & \sum_{j=1}^{N_T} \left(\dfrac{\sum_{i=1}^{j-1} (\widehat{m}(T_j) - \widehat{m}(T_i))\mu'(T_j - T_i)}{{ \lambda_{T_j}} + \sum_{i=1}^{j-1} \mu(T_j-T_i)} + \widehat{m}(T-T_j) + m(T_j) \right),
        \\ \int_{(0,T]} \mu(T-t) \widehat{m}(t) dN_t & = & \sum_{i=1}^{N_T} \mu(T-T_i) \widehat{m}(T_i)
        \\ \int_{(0,T]} \mu'(T-t) \widehat{m}(t)^2 dN_t & = & \sum_{i=1}^{N_T} \mu'(T-T_i) \widehat{m}(T_i)^2
        \\ \int_{(0,T]} \mu(T-t) m(t) \widehat{m}(t) dN_t & = & \sum_{i=1}^{N_T} \mu(T-T_i) m(T_i) \widehat{m}(T_i).
    \end{eqnarray*}
    In other words, if we simulate a sample of Hawkes process then we can approach $\Delta$ conditionally to $\{N_T > 0\}$.
\end{Remark}

\begin{Remark}
    On $\{N_T = 0\}$, the process $S^{x_0}$ is deterministic and we have to know the derivative of the function $f$ to compute $\Delta$.
\end{Remark}

\begin{Remark}
    For the other Greeks we can notice that
    \begin{eqnarray*}
        \dfrac{\partial^2 S_T^{x_0}}{\partial x_0^2} & = & 0,
        \\ \dfrac{\partial S_T^r}{\partial r} & = & T S_T^{x_0}
        \\ \dfrac{\partial S_T^\sigma}{\partial \sigma} & = & \left(- \Lambda_T + \dfrac{N_T}{1+\sigma}\right) S_T^{x_0}.
    \end{eqnarray*}
    Then we can deduce similar expressions of the other Greeks conditionally to $\{N_T > 0\}$. For $\Gamma = \dfrac{\partial^2 C}{\partial x_0^2}$ we can start by writing
    \begin{eqnarray*}
        G^{x_0} & = & m \mathbf{1}_{\{N_T > 0\}} \dfrac{\frac{\partial S_T^{x_0}}{\partial x_0}}{D_m S_T^{x_0}},
        \\ \dfrac{\partial^2}{\partial x_0^2} \mathbb{E}[\mathbf{1}_{\{N_T>0\}} f(S_T^{x_0})] & = & \dfrac{\partial}{\partial x_0} \mathbb{E}\left[f(S_T^{x_0}) \delta\left( G^{x_0} \right)\right]
        \\ & = & \mathbb{E}\left[ f(S_T^{x_0}) \delta\left(G^x m \dfrac{\frac{\partial S_T^{x_0}}{\partial x_0}}{D_m S_T^{x_0}} \right) \right] + \mathbb{E}\left[ f(S_T^{x_0}) \dfrac{\partial}{\partial x_0} \delta\left( G^{x_0} \right) \right]
    \end{eqnarray*}
    where we apply two times Proposition \ref{propo:calcul:greeks} with different processes $G$.
\end{Remark}

\bibliography{biblio}

\end{document}